\definecolor{labelkey}{rgb}{0.142, 0.812, 0.522}
\definecolor{refkey}{rgb}{0.0, 0., 0.}
\DeclareSymbolFont{euleroperators}{U}{eur}{m}{n}
\DeclareRobustCommand\nlab{k}
\newcommand{\mytag}[2]{\tag*{(\theequation #1)#2}}
\renewcommand{\operator@font}{\mathgroup\symeuleroperators}
\let\c@equation\c@figure
\def\@seccntformat#1{%
  \protect\textup{\protect\@secnumfont
    \ifnum\pdfstrcmp{subsection}{#1}=0 \bfseries\fi
    \csname the#1\endcsname
    \protect\@secnumpunct
  }%
}
\def\@tocline#1#2#3#4#5#6#7{\relax
   \ifnum #1>\c@tocdepth 
   \else
     \par \addpenalty\@secpenalty\addvspace{#2}%
     \begingroup \hyphenpenalty\@M
     \@ifempty{#4}{%
       \@tempdima\csname r@tocindent\number#1\endcsname\relax
     }{%
       \@tempdima#4\relax
     }%
     \parindent\z@ \leftskip#3\relax \advance\leftskip\@tempdima\relax
     \rightskip\@pnumwidth plus4em \parfillskip-\@pnumwidth
     #5\leavevmode\hskip-\@tempdima #6\nobreak\relax
     \ifnum#1<0\hfill\else\dotfill\fi\hbox to\@pnumwidth{\@tocpagenum{#7}}\par
     \nobreak
     \endgroup
   \fi}
\NewDocumentCommand{\bn}{sO{}m}{%
  {\IfBooleanTF{#1}
    {\oldnormaux{\left|}{\right|}{#3}}
    {\oldnormaux{#2|}{#2|}{#3}}}
}
\newcommand{\oldnormaux}[3]{\mathpalette\oldnormaux@i{{#1}{#2}{#3}}}
\newcommand{\oldnormaux@i}[2]{\oldnormaux@ii#1#2}
\newcommand{\oldnormaux@ii}[4]{%
  \sbox\z@{$\m@th#1#2#4#3$}%
  \sbox\tw@{$\m@th\|$}%
  \mathopen{\hbox to\wd\tw@{\hss\vrule height \ht\z@ depth \dp\z@ width .2\wd\tw@\hss}}%
  #4
  \mathclose{\hbox to\wd\tw@{\hss\vrule height \ht\z@ depth \dp\z@ width .2\wd\tw@\hss}}%
}
\numberwithin{equation}{section}
\renewcommand{\thefigure}{\arabic{section}.\arabic{figure}}
\renewcommand{\thefigure}{\ifnum\value{section}>0 \arabic{section}.\fi\arabic{figure}}
\renewcommand{\theequation}{\ifnum\value{section}>0 \arabic{section}.\fi\arabic{equation}}
\newtheorem{thm}[equation]{Theorem}
\newtheorem{cor}[equation]{Corollary}
\newtheorem{lem}[equation]{Lemma}
\newtheorem{prp}[equation]{Proposition}
\newcommand{\thistheoremname}{}
\newtheorem*{genericthm*}{\thistheoremname}
\newenvironment{namedthm*}[1]
  {\renewcommand{\thistheoremname}{#1}%
   \begin{genericthm*}}
  {\end{genericthm*}}
\theoremstyle{definition}
\newtheorem{dfn}[equation]{Definition}
\newtheorem{ex}[equation]{Example}
\theoremstyle{remark}
\newtheorem{rem}[equation]{Remark}
\newcommand{\thmref}[1]{Theorem~\ref{#1}}
\newcommand{\prpref}[1]{Proposition~\ref{#1}}
\newcommand{\lemref}[1]{Lemma~\ref{#1}}
\newcommand{\corref}[1]{Corollary~\ref{#1}}
\newcommand{\dfnref}[1]{Definition~\ref{#1}}
\newcommand{\remref}[1]{Remark~\ref{#1}}
\newcommand{\exref}[1]{Example~\ref{#1}}
\newcommand{\figref}[1]{Figure~\ref{#1}}
\newcommand{\secref}[1]{Section~\ref{#1}}
\newcommand\bend[1]{\raisebox{1.5mm}{{\tiny\dbend}\;}{\textit{\textsf{#1}}}}
\DeclareMathOperator{\bnd}{bnd}
\DeclareMathOperator{\C}{C}
\DeclareMathOperator{\card}{card}
\DeclareMathOperator{\gr}{gr}
\DeclareMathOperator{\sgr}{sgr}
\DeclareMathOperator{\supp}{supp}
\newcommand{\Ab}{\boldsymbol{A}}
\newcommand{\Ag}{\mathfrak A}
\newcommand{\al}{\alpha}
\newcommand{\be}{\beta}
\newcommand{\Bg}{\mathfrak B}
\newcommand{\BS}{\operatorname{BS}}
\newcommand{\Cc}{\mathcal C}
\newcommand{\Dc}{\mathcal D}
\newcommand{\De}{\Delta}
\newcommand{\de}{\delta}
\newcommand{\Eg}{\mathfrak E}
\newcommand{\ep}{\varepsilon}
\newcommand{\FC}{\operatorname{FC}}
\newcommand{\Fc}{\mathcal F}
\newcommand{\free}{\mathop{\Huge \mathlarger{\mathlarger{*}}}}
\newcommand{\Fo}{\operatorname{F}}
\newcommand{\ga}{\gamma}
\newcommand{\gab}{\boldsymbol{\gamma}}
\newcommand{\ib}{\;\raisebox{0.25ex}{$\scriptstyle\bullet$}\;}
\newcommand{\Ic}{\mathcal I}
\newcommand{\ka}{\kappa}
\newcommand{\La}{\Lambda}
\newcommand{\la}{\lambda}
\newcommand{\mapstoto}{\mathop{\,\sim\joinrel\rightsquigarrow\,}}
\newcommand*{\mi}{\leavevmode\hphantom{0}\llap{\settowidth{\dimen0}{$0$}\resizebox{1.1\dimen0}{\height}{$-$}}}
\newcommand{\mr}[1]{\overset{\smash{\lower.1em\hbox{$\,\mathsmaller{\mathsmaller{\bullet}}$}}}{#1}}
\newcommand{\ms}{\mathstrut}
\newcommand\myp{\mkern-.0mu\raise0.4ex\hbox{$\scriptstyle\prime$}}
\newcommand{\N}{{\!\!\boldsymbol{\#}}}
\newcommand{\NN}{\mathbb N}
\newcommand{\nnu}{\boldsymbol{\upnu}}
\newcommand{\ns}{\smalltriangleleft}
\newcommand{\om}{\omega}
\newcommand{\ov}{\overline}
\newcommand{\p}{\partial}
\newcommand{\Pb}{\mathbf P}
\newcommand{\ph}{\varphi}
\newcommand{\Ps}{\p_\mu G}
\newcommand{\RA}{\operatorname{R}_{\operatorname{amen}}}
\newcommand{\RR}{\mathbb R}
\newcommand{\Sc}{\mathcal S}
\newcommand{\sd}{\rightthreetimes}
\newcommand{\Sf}{\mathfrak S}
\newcommand{\Sfb}{\boldsymbol{\mathfrak S}}
\newcommand{\sh}{\sharp}
\newcommand{\Si}{\Sigma}
\newcommand{\si}{\sigma}
\newcommand{\Sib}{\boldsymbol{\Sigma}}
\renewcommand{\sl}{\overrightarrow}
\newcommand{\sm}{\,\setminus\,}
\newcommand{\sr}{\overleftarrow}
\newcommand{\Stab}{\operatorname{Stab}}
\newcommand{\Tc}{\mathcal T}
\newcommand{\tha}{\theta}
\newcommand{\Uc}{\mathcal U}
\newcommand{\vn}{\varnothing}
\newcommand{\wh}{\widehat}
\newcommand{\wt}{\widetilde}
\newcommand{\xb}{\boldsymbol{x}}
\newcommand{\yb}{\boldsymbol{y}}
\newcommand{\ze}{\zeta}
\newcommand{\ZZ}{\mathbb Z}
\begin{document}

\author{Anna Erschler}
\address{A.E.: C.N.R.S., Ecole Normale Superieur, PSL Research University,  45 rue d'Ulm, Paris, France}
\email{anna.erschler@ens.fr}

\author{Vadim A.\ Kaimanovich}
\address{V.K.: Department of Mathematics and Statistics, University of
Ottawa, 150 Louis Pasteur, Ottawa ON, K1N 6N5, Canada}
\email{vkaimano@uottawa.ca, vadim.kaimanovich@gmail.com}

\title[Arboreal structures on groups]{Arboreal structures on groups \\ and the associated boundaries}

\thanks{The work of the first named author was supported by the ERC grant GroIsRan and by the ANR grant MALIN}

\begin{abstract}
For any countable group with infinite conjugacy classes we construct a family of forests on the group. For each of them there is a random walk on the group with the property that its sample paths almost surely converge to the geometric boundary of the forest in a way that resembles the simple random walks on trees. It allows us to identify the Poisson boundary of the random walk with the boundary of the forest and to show that the group action on the Poisson boundary is free (which, in particular, implies non-triviality of the Poisson boundary). As a consequence we obtain that any countable group carries a random walk such that the stabilizer of almost every point of the Poisson boundary coincides with the hyper-FC-centre of the group, and, more generally, we characterize all normal subgroups which can serve as the pointwise stabilizer of the Poisson boundary of a random walk on a given countable group. Our work is a development of a recent result of Frisch - Hartman - Tamuz - Vahidi Ferdowsi who proved that any group which is not hyper-FC-central admits a measure with a non-trivial Poisson boundary.
\end{abstract}

\vspace*{-1.cm}
\maketitle

{\footnotesize\sffamily\tableofcontents}

\thispagestyle{empty}

\vfill\eject

\phantom{z}

\setlength{\headsep}{1cm}

\vspace{.5cm}

\section*{Introduction}

The Poisson boundary of a Markov chain is a measure space that describes the stochastically significant behaviour of the chain at infinity. It provides an integral representation of the bounded harmonic functions of the chain, so that, in particular, the Poisson boundary is trivial if and only if the chain has no non-constant bounded harmonic functions (the Liouville property). The origins of this concept go back to Blackwell \cite{Blackwell55}, Feller \cite{Feller56}, Doob \cite{Doob59}, and, in the special case of random walks on groups, to Dynkin -- Malyutov \cite{Dynkin-Malutov61} and Furstenberg \cite{Furstenberg63, Furstenberg67, Furstenberg71} (for random walks one sometimes uses the term Poisson -- Furstenberg boundary as well, see  \cite{Vershik00, Erschler10}). The Poisson boundary is related to the asymptotic geometry of the underlying group; see Bartholdi -- Virag \cite{Bartholdi-Virag05} (as well as \cite{Amir-Angel-MatteBon-Virag16} and the references therein for the latest developments in this direction) concerning the use of the Liouville property for establishing amenability of groups, and \cite{Erschler04a, Erschler-Zheng18p} for the applications to growth. It also plays a role in the rigidity theory \cite{Furstenberg71, Kaimanovich-Masur96, Farb-Masur98}.

It is known that a countable group admits a random walk with a trivial Poisson boundary if and only if the group is amenable \cite{Furstenberg73, Kaimanovich-Vershik83,Rosenblatt81}. On the other hand, the Poisson boundary is trivial for \emph{any} irreducible random walk on an abelian group; this result goes back to Blackwell \cite{Blackwell55} (also see \cite{Doob-Snell-Williamson60, Choquet-Deny60}), and was further extended to nilpotent groups \cite{Dynkin-Malutov61, Margulis66}, and ultimately to hyper-FC-central groups\footnotemark\; \cite{Lin-Zaidenberg98, Jaworski04}.

\footnotetext{\;
The union of all finite conjugacy classes of a group $G$ is its normal subgroup called the {\bf FC-centre} and denoted $\FC(G)$. If any non-identity element of $G$ has infinitely many conjugates, i.e., if $\FC(G)$ is trivial, then $G$ is said to be a group with infinite conjugacy classes (or, an {\bf ICC group} in short). The  {\bf hyper-FC-centre} $\FC_{\lim}(G)$ is the limit of the transfinite upper FC-series of the group, and it is the minimal normal subgroup of $G$ with the property that the associated quotient group is ICC. A group is called {\bf hyper-FC-central} if it coincides with its hyper-FC-centre. A finitely generated group is hyper-FC-central if and only if it is virtually nilpotent.}

In a striking recent result Frisch~-- Hartman~-- Tamuz~-- Vahidi Ferdowsi \cite{Frisch-Hartman-Tamuz-Ferdowsi18} showed that, on the other hand, any countable group which is \emph{not} hyper-FC-central does admit irreducible measures with a non-trivial boundary; in particular, this is the case for any finitely generated group of exponential growth which solves a conjecture of Vershik and the second named author stated in \cite{Kaimanovich-Vershik83}. Our Theorem A below is a development of their result.

\begin{namedthm*}{Theorem A}[\;=\;\thmref{thm:main}]
For any countable ICC group $G$ there exist a probability measure $\mu$ on~$G$ (which can be chosen to be symmetric, non-degenerate, and of finite entropy) and a locally finite forest $\Fc$ with the vertex set $G$ such that:
\begin{enumerate}[{\rm (i)}]
\item
almost all sample paths of the random walk $(G,\mu)$ converge to the boundary $\p\Fc$ of the forest $\Fc$;
\item
almost every sample path visits all but a finite number of points of any geodesic ray in $\Fc$ ending at the limit point of the path;
\item
the arising quotient map from the Poisson boundary $\p_\mu G$ to the boundary $\p\Fc$ endowed with the family of the resulting hitting distributions is an isomorphism;
\item
the action of the group $G$ on the Poisson boundary $\p_\mu G$ is free \textup{(mod 0)}.
\end{enumerate}
\end{namedthm*}

In contrast to \cite{Frisch-Hartman-Tamuz-Ferdowsi18}, where \emph{quantitative} estimates on the total variance distance between the convolution powers of $\mu$ and their translates were used to claim the non-triviality of the boundary for amenable ICC groups, Theorem A is of a \emph{qualitative} nature. Its base is a novel \emph{arboreal structure} present on \emph{any} ICC group $G$ (no matter, amenable or non-amenable), which is a forest of rooted trees with the vertex set $G$. It is in terms of this forest
that we exhibit a non-trivial behaviour of sample paths at infinity in claim~(i) and further
explicitly identify the Poisson boundary in claim (iii). Another difference between our approach and that of \cite{Frisch-Hartman-Tamuz-Ferdowsi18} is similar to the difference between the strong and the weak laws of large numbers: the latter is a property of the sequence of convolution powers, whereas the former is a property of individual sample paths.

Claim (ii) is a strong convergence property. In the case of simple random walks on free groups and, more generally, on trees, it was used already by Dynkin -- Malyutov \cite{Dynkin-Malutov61} and Cartier \cite{Cartier72}, respectively, for an identification of the topological Martin boundary with the space of ends. However, it is quite a rare phenomenon otherwise, even within the class of simple random walks.
Claim (ii) means that any ICC group has a random walk with an infinitely supported step distribution which, in a sense, behaves like the simple random walk on a tree.

A basic question about the Poisson boundary is to provide its complete description (or identification) for a given Markov chain. Even in such a fundamental example as a free group, for which the convergence to the boundary of the group is known for any random walk (which essentially goes back to \cite[Section 4]{Furstenberg67}), it is still an open question whether this convergence completely describes the Poisson boundary.  The strong convergence property of claim (ii) almost automatically implies the identification of the Poisson boundary in claim (iii) without any restrictions imposed by the conditional entropy criterion \cite{Kaimanovich00a} (the current standard tool for identifying the Poisson boundary).

Nearly 50 years ago Furstenberg noticed that \emph{the advantage of the Poisson boundary is that perturbing the measure $\mu$ does not appear to radically change the nature of the Poisson boundary}
\cite[p.\ 25]{Furstenberg71}. Indeed, in all currently known examples of an explicit identification of the Poisson boundary its underlying space remains the same for all step distributions $\mu$ from a sufficiently wide class (although the hitting distributions and their measure classes do depend on the choice of $\mu$), see \cite{Kaimanovich00a, Ledrappier01, Lyons-Peres15, Maher-Tiozzo18p} and the references therein. Theorem A provides many measures~$\mu$ on the same ICC group~$G$ such that their Poisson boundaries do not have a common underlying space (the forests, in terms of which the boundary is described, significantly depend on $\mu$). This absence of universality seems to be the first example of this kind.

It is well-known that for any random walk $(G,\mu)$ the action of the group $G$ on the Poisson boundary $\p_\mu G$ is ergodic, and therefore this action is non-trivial whenever the Poisson boundary is not a singleton. It is claimed in \cite[Remark 2.7]{Frisch-Hartman-Tamuz-Ferdowsi18} that if $G$ is an amenable ICC group, then for any $g\in G\sm\{e\}$ there is a measure $\mu=\mu_g$ such that the action of~$g$ on $\p_\mu G$ is non-trivial. Our claim (iv) states that on any ICC group there is a random walk with the property that the group action on the Poisson boundary is not only faithful (i.e., any non-identity element acts non-trivially), but also (mod 0) free (i.e., the stabilizer subgroup of almost every boundary point is trivial). It might be interesting to compare claim (iv) with the equivalence of the $C^*$-simplicity of a countable group $G$ and the freeness of the action on its topological Furstenberg boundary recently proved in \cite[Theorem~1.5]{Kalantar-Kennedy17} (also see \cite[Theorem~3.1]{Breuillard-Kalantar-Kennedy-Ozawa17}) as well as with the work of Hartman -- Kalantar \cite{Hartman-Kalantar17p} on $C^*$-simple probability measures on groups.

Talking about a general (not necessarily ICC) group $G$, the action of the hyper-FC-centre $\FC_{\lim}(G)$ on the Poisson boundary $\p_\mu G$ is trivial for any non-degenerate step distribution $\mu$ (i.e., such that $\supp\mu$ generates $G$ as a semigroup, \prpref{prp:fc}; this need not be true for degenerate measures, see \secref{sec:ex}). Therefore, Theorem A applied to the quotient ICC group
$G/\FC_{\lim}(G)$ implies the existence of a probability measure $\mu$ on $G$ such that the Poisson boundary $\p_\mu G$ can be explicitly described, any non-hyper-FC-central element of $G$ acts on the $\p_\mu G$ non-trivially, and, moreover, the stabilizers of almost all points of the Poisson boundary coincide with $\FC_{\lim}(G)$.

We can actually completely characterize the subgroups of $G$ which arise as the pointwise stabilizers of the Poisson boundary of the non-degenerate random walks on $G$:

\begin{namedthm*}{Theorem B}[\;=\;\thmref{thm:stab}]
For any countable group $G$ the following conditions on a subgroup $H\subset G$ are equivalent:
\begin{enumerate}[{\rm (i)}]
\item
there exists a non-degenerate probability measure $\mu$ on $G$ such that the stabilizer of almost every point of the Poisson boundary $\p_\mu G$ is $H$;
\item
there exists a non-degenerate probability measure $\mu$ on $G$ such that the pointwise stabilizer of the Poisson boundary $\p_\mu G$ is $H$ (i.e., $H$ is the intersection of almost all point stabilizers);
\item
$H$ is an amenable normal subgroup of $G$ with the property that the quotient $G/H$ is either an ICC group or the trivial group.
\end{enumerate}
\end{namedthm*}

In the particular case when $G$ is an ICC group and $H=\{e\}$ this is Theorem A(iv), and in the case when $G$ is amenable and $H=G$ this is the aforementioned result from \cite{Rosenblatt81, Kaimanovich-Vershik83} on the existence of random walks with the trivial Poisson boundary on any amenable group. The proof of the main implication (iii)$\implies$(i) consists in applying Theorem A(iv) to the quotient group $\ov G=G/H$ to obtain a non-degenerate probability measure $\ov\mu$ on $\ov G$ such that the action of $\ov G$ on the Poisson boundary $\p_{\ov\mu}{\ov G}$ is (mod 0) free. Because of the amenability of $H$ the measure $\mu$ can then be lifted to a measure $\mu$ with the property (i) by \cite[Theorem 2]{Kaimanovich02a}.

The measure $\mu$ in Theorem B need not have finite entropy in contrast to Theorem A, as, for instance, the amenable wreath product $G=\ZZ^3\wr\ZZ_2$ has no finite entropy measures with a non-trivial boundary \cite{Erschler04}.

\bigskip

\textbf{Arboreal structures. An outline of the proof of Theorem A.}

\medskip

\textbf{1.}
An \textsf{arboreal structure} on a countable group $G$ can be viewed as a mapping from a forest with $G$-labelled edges to $G$. A particular case of this notion, which is at the core of the proof of Theorem~A, is when this mapping is bijective, so that the vertex set of the forest can be identified with the group. Its construction for an arbitrary ICC group (see Sections \ref{sec:ladders} and \ref{sec:sicc}) is inspired by the notions of switching and super-switching elements introduced and applied by Frisch~-- Tamuz -- Vahidi Ferdowsi \cite{Frisch-Tamuz-Ferdowsi18} and Frisch~-- Hartman~-- Tamuz~-- Vahidi Ferdowsi \cite{Frisch-Hartman-Tamuz-Ferdowsi18}, respectively. The version of this construction described below is simplified (with symmetric switching sets and no filling sets, in terms of \dfnref{dfn:lad}) compared to the one we actually use in the proof of Theorem~A, but it is still sufficient for dealing with finitely generated groups.

Given a sequence of symmetric subsets $\Si_n\subset G$ and a \textsf{gauge function} $\la$, we define a \textsf{spike decomposition} (\dfnref{dfn:spike}) of an element $g\in G$ as its presentation as the product
\begin{equation} \tag{$\bigstar$}
g = \sl g \cdot \wh g \cdot \sr g
\end{equation}
of a \textsf{spike} $\wh g\in\Si_n$ (the number $n$ is called the \textsf{height} of the decomposition)
by at most $\la(n)$ multipliers (on either side) from the preceding sets $\Si_1,\Si_2,\dots,\Si_{n-1}$:
$$
\sl g,\sr g\in \De_n^{\la(n)},\;\quad\text{where}\quad \De_n=\Si_1 \cup \Si_2 \cup \dots \cup \Si_{n-1} \;.
$$
The associated \textsf{despiking graph} is the graph with the vertex set $G$ whose edges are all pairs $(g,\sl g)$, where $\sl g$ is the prefix of a decomposition ($\bigstar$) of an element $g\in G$.

The key role in endowing the group~$G$ with an arboreal structure, which is essential for our proof of Theorem A, is played by the \emph{uniqueness} of spike decomposition ($\bigstar$). If this is the case for a group element $g\in G$, then we can define its \textsf{height} $\bn g$ as the height of the unique decomposition of $g$. By $\Uc\subset G$ we denote the set of all $g\in G$ which are \textsf{unspiked} (admit no spike decomposition), and put for them $\bn g=0$. We say that a sequence $(\Si_n)$ is a $\la$\textsf{-ladder} (\dfnref{dfn:lad}) if any $g\in G$ has at most one spike decomposition, and, additionally, $\bn{\sl g}<\bn g$ for any $g\in G\sm\Uc$. If $(\Si_n)$ is a $\la$-ladder, then the associated despiking graph is a forest denoted by~$\Fc$. Each of the trees of $\Fc$ contains a unique unspiked vertex (its root), and $g\to\sl g$ is the orientation of the edges of $\Fc$ towards the corresponding roots (\prpref{prp:root}).

The forest $\Fc$ can also be described recursively, by ``growing'' its trees from the set $\Uc$ of the unspiked elements of the group (\secref{sec:Markov}). In this description the geodesic rays of $\Fc$ issued from a root $g_0\in\Uc$ are the sequences
$$
g_0, g_1, \dots, g_k, \dots \qquad\text{with}\qquad g_{k-1}=\sl{g_k} \;,
$$
i.e., their entries are the products
$$
g_k = g_{k-1} \si_k h_k = g_0\si_1 h_1\si_2 h_2\dots\si_k h_k \;,
$$
see \figref{fig:forest}, which satisfy the following conditions:
{\setlength\leftmargini{1.5cm}
\begin{itemize}
\item[(a)]
$\si_k\in\Si_{n_k}$ for an increasing sequence $n_k=\bn{g_k}$;
\item[(b--c)]
$g_{k-1}$ and $h_k$ are in the ball $\De_{n_k}^{\la(n_k)}$ for all $k\ge 1$.
\end{itemize}

\captionsetup[figure]{font=small}
\begin{figure}[h]
\begin{center}
\psfrag{f}[l][r]{$\si_k h_k$}
\psfrag{e}[l][cl]{$g_{k-1}$}
\psfrag{a}[r][b]{$g_0$}
\psfrag{c}[cl][cl]{$g_1$}
\psfrag{b}[cl][cl]{$\si_1h_1$}
\psfrag{g}[l][c]{$g_k$}
\includegraphics[scale=.6]{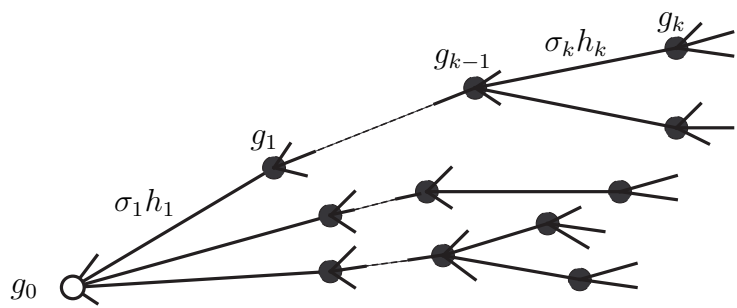}
\end{center}
\caption{\footnotesize
An arboreal structure for a rapidly growing gauge function $\la$. The edge labels satisfy conditions (a)  $\si_k \in \Si_{n_k}$ for a strictly increasing integer sequence~$n_k$, and \newline (b) $h_k\in\bigl(\Si_1\cup\dots\cup\Si_{n_k-1}\bigr)^{\la(n_k)}$.}
\label{fig:forest}
\end{figure}

If the function $\la$ grows fast enough, then the fact that $g_{k-1}\in \De_{n_k}^{\la(n_k)}$ for $k>1$ in condition (b--c) actually follows just from the restrictions on $h_k$, so that instead of (b--c) it is enough to require
\begin{itemize}
\item[(b)]
$h_k \in \De_{n_k}^{\la(n_k)}$ for all $k\ge 1$,
\item[(c$_1$)]
$g_0\in \De_{n_1}^{\la(n_1)}$.
\end{itemize}
Therefore (\prpref{prp:Markov}), in this situation the geodesic rays of the forest $\Fc$ are Markovian with the transition rules (a), (b) for any $k\ge 2$ and the initial conditions
\begin{itemize}
\item[(b$_1$--c$_1$)]
$g_0$ and $h_1$ are in the ball $\De_{n_1}^{\la(n_1)}$;\smallskip
\item[(d)]
the root $g_0$ is unspiked (in other words, $g_0$ can not be the root of a tree satisfying conditions (a) and (b--c)).
\end{itemize}
}

The boundary $\p\Fc$ of the forest $\Fc$ is defined in the usual way as the disjoint union of the boundaries of its trees.  Although it is not preserved by the action of the group~$G$, the definition of a ladder easily implies that the maximal $G$-invariant subset $\p\Fc_G\subset\p\Fc$ (\dfnref{dfn:gi}) is non-empty, and, moreover, the action of the group $G$ on $\p\Fc_G$ is~free. An explicit non-empty $G$-invariant subset $\p_\sh\Fc\subset\p\Fc$ corresponds to the set of
geodesic rays $g_0, g_1, \dots$ with the property that
$\la\left(\bn{ g_k }\right) - \left| g_{k-1} \right|_{\bn{ g_k }}  \to \infty$ (cf. condition (b--c) above), where $|g|_n$ denotes the length of $g\in G$ with respect to the set $\De_n=\Si_1 \cup \Si_2 \cup \dots \cup \Si_{n-1}$ (\prpref{prp:free}).

\medskip

\textbf{2.} For proving Theorem A we only need that the measure $\mu$ agree, in the sense to be specified below, with a ladder on the group $G$. We begin by choosing a probability measure $p=(p_j)$ on $\ZZ_+$ with the property that the record values $R_k$ of almost every sequence of independent $p$-distributed random variables are eventually simple (meaning that $R_k$ does not occur a second time before the next record value $R_{k+1}>R_k$ is achieved). There is a well-known explicit necessary and sufficient condition for that (\lemref{lem:bsw}) satisfied, for instance, for any distribution $p$ with a polynomial decay.

Then we choose a rapidly growing function $\la$ (\lemref{lem:psi}). A sufficient condition on its growth is given by the bound $T_{k+1} \le \la (R_k)$ for the record time $T_{k+1}$ in terms of the previous record value $R_k=X_{T_k}$, which is required to be eventually satisfied for almost every independent $p$-distributed sequence $(X_n)$.

Let now $(\Si_n)$ be a $\la$-ladder, and $\mu$ be a probability measure on the group $G$ such that $\mu(\Si_n)=p_n$. Take a sequence of independent $\mu$-distributed increments $x_n$ of the random walk $(G,\mu)$, and let $T_k$ and $R_k=\bn{x_{T_k}}$ be, respectively, the record times and the associated record values of the sequence $\bn{x_n}$ (we remind that $\bn \si=n$ for $\si\in\Si_n$, so that $\bn{x_n}$ are independent and $p$-distributed). If a particular record value $R_k$ is simple, then
the position $y_n = x_1 x_2 \dots x_n$ of the random walk at any time $n$ with $T_k\le n<T_{k+1}$ is spiked with the spike $\wh{\,y_n} = x_{T_k}$, and $\sl{y_n} = x_1 x_2 \dots x_{T_k-1} = y_{T_k-1}$
(i.e., $y_n$ and $y_{T_k-1}$ are neighbours in the forest $\Fc$). In particular,
$\sl{y_{T_{k+1}-1}} = y_{T_k-1}$, so that the subsequence $\left(y_{T_k-1}\right)$ is a geodesic ray for sufficiently large $k$, and the whole sample path $(y_n)$ converges to the limit point of this geodesic ray (\thmref{thm:poi}), see \figref{fig:forestwalk} for an illustration.

\captionsetup[figure]{font=small}
\begin{figure}[h]
\begin{center}
\includegraphics[scale=.5]{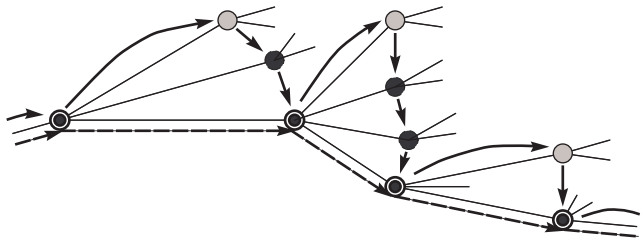}
\end{center}
\captionsetup{singlelinecheck=off}
\caption[.]{\footnotesize Boundary convergence. The solid arrows represent the random walk transitions, whereas the dashed arrows represent the limit geodesic ray. The circled bullets are the points $y_{T_k-1}$, and the greyed out circles are the points $y_{T_k}$.}
\label{fig:forestwalk}
\end{figure}

While the forest $\Fc$ is not locally finite, we also describe its locally finite \textsf{constrained subforest} (\dfnref{dfn:constr}) and show, in \thmref{thm:poi}(v), that the sample paths of the random walk almost surely converge to the boundary of that subforest as well.

\medskip

\textbf{3.} In order to complete the proof of Theorem A it remains to show that any ICC group has a $\la$-ladder for any gauge function $\la$. Given a subset $Z\subset G$, we say that a set $\Si\subset G\sm\{e\}$ is $Z$\textsf{-switching} if
$$
\si z =  z' \si' \iff z=z'=e,\; \si=\si'
$$
whenever $z,z'\in Z$ and $\si,\si'\in \Si$, i.e., if the product sets $\Si Z$ and $Z \Si$ have no common elements other than the ones arising for ``trivial'' reasons (\dfnref{dfn:sset}, this is a somewhat more general form of the notions introduced in \cite{Frisch-Tamuz-Ferdowsi18, Frisch-Hartman-Tamuz-Ferdowsi18}). The key property relating switching sets and ladders is that a sequence $(\Si_n)$, in which each $\Si_n$ is switching for the set $(\Si_1\cup\Si_2\cup\dots\Si_{n-1})^{5\la(n)}$, is a $\la$-ladder (\lemref{lem:5la}). Therefore, for making sure that the measure $\mu$ in Theorem A can be chosen symmetric, the above switching sets~$\Si_n$ have to be symmetric as well.

In the terminology of \cite{Frisch-Tamuz-Ferdowsi18, Frisch-Hartman-Tamuz-Ferdowsi18} the switching sets of the form $\Si=\{\si\}$ and $\Si=\{\si,\si^{-1}\}$ correspond to the \emph{switching} and the \emph{superswitching elements} $\si$, respectively. The existence of an infinite number of switching elements for any finite subset in an ICC group $G$ was established in \cite[Claim 3.3]{Frisch-Tamuz-Ferdowsi18}. Under the additional assumption that $G$ is \emph{amenable} this
was extended to superswitching elements in \cite[Proposition~2.4]{Frisch-Hartman-Tamuz-Ferdowsi18}. We eliminate this assumption and show that there are infinitely many superswitching elements for any finite subset in \emph{any} ICC group (\prpref{prp:ss}).

Now, given a finitely generated ICC group, we can take for $\Si_1$ any finite symmetric generating set and recursively obtain a symmetric $\la$-ladder for any gauge function $\la$, which completes the proof of Theorem A in the finitely generated case. For an infinitely generated ICC group the whole construction (including the definition of ladders) has to be modified in order to make sure that the support of the resulting measure $\mu$ generates the whole group. This is done by allowing the presence of an exhausting sequence of \emph{filling sets} (\dfnref{dfn:scale}), which then has to be taken into account in the aforementioned arguments.

\medskip

\textbf{Acknowledgement.} We are grateful to Todor Tsankov for indicating a reference to Neumann's Lemma \cite[(4.2)]{Neumann54}.

\section{Ladders, the associated forests, and their boundaries} \label{sec:ladders}

\subsection{Spike decomposition and ladders}

\begin{dfn} \label{dfn:scale}
A \textsf{scale} on a countable group $G$ is a triple
$$
\La=(\la,\Sib,\Ab)
$$
which consists of an unbounded non-decreasing \textsf{gauge function}
$$
\la:\NN\to\NN
$$
and two sequences
$$
\Sib = (\Si_1,\Si_2,\dots) \;, \qquad
\Ab = (A_0,A_1,\dots)
$$
of subsets of $G$ called the \textsf{spiking} and the \textsf{filling} sequences, respectively. All the spiking sets $\Si_n$ are required to be non-empty and pairwise disjoint, whereas the filling sets $A_n$ are allowed to be empty.
\end{dfn}

By
\begin{equation} \label{eq:de}
\De_n = \bigcup_{i=1}^{n-1} \Si_i^{\pm 1} \; \cup \; \bigcup_{i=0}^{n-1} A_i^{\pm 1} \;,\qquad n\ge 1 \;,
\end{equation}
we denote the union of the spiking sets $\Si_i$, the filling sets $A_i$, and their inverses with the indices \emph{strictly} less than $n$. By passing, if necessary, to a smaller subgroup, we shall assume without any loss of generality that
\begin{equation} \label{eq:asi}
\textsf{the set \; $\De_\infty = \bigcup_{n=1}^\infty \De_n$ \; generates the group $G$ \;,}
\end{equation}
i.e., equivalently, that the sequence of sets $\De_n^{\la(n)}$ exhausts the whole group~$G$. For the sake of technical simplicity we shall also assume that
\begin{equation} \label{eq:a0e}
\textsf{the set $A_0$ is non-empty and contains the group identity $e$\;.}
\end{equation}

By
\begin{equation} \label{eq:length}
|g|_\al = \min \bigl\{k\ge 0: g \in \De_\al^k \bigr\} \;, \qquad \al=1,2,\dots,\infty \;,
\end{equation}
we denote the \textsf{length} of a group element $g\in G$ with respect to the corresponding set $\De_\al$. Although $|g|_\infty<\infty$ for any $g\in G$ by assumption \eqref{eq:asi}, individual sets $\De_n$ with $n<\infty$ need not generate the whole group $G$, so that some of the lengths $|g|_n$ may well be infinite. However, since $\De_n\nearrow\De_\infty$,
\begin{equation} \label{eq:ni}
|g|_n \searrow |g|_\infty < \infty \qquad\forall\,g\in G \;.
\end{equation}

\begin{dfn} \label{dfn:spike}
A \textsf{spike decomposition} of a group element $g\in G$ with respect to a scale $\La=(\la,\Sib,\Ab)$ is its presentation as a product
$$
g = \sl g \cdot \wh g \cdot \sr g \;,
$$
where
\begin{enumerate}[{\rm (i)}]
\item
the \textsf{spike} $\wh g$ belongs to $\Si_n$ for a certain $n>0$ called the \textsf{height} of the decomposition;
\item
the \textsf{prefix} $\sl g$ and the \textsf{postfix} $\sr g$ are both products of at most $\la(n)$ multipliers from the union $\De_n$ \eqref{eq:de} of the sets $\Si_k^{\pm 1}, A_k^{\pm 1}$ with the indices $k<n$, i.e.,
\begin{equation} \label{eq:lan}
|\sl g|_n, |\sr g|_n \le \la(n) \;.
\end{equation}
\end{enumerate}
If $g\in G$ does not admit any spike decompositions, we call it \textsf{unspiked} and denote the set of all unspiked elements by $\Uc=\Uc(\La)\subset G$. If $g\in G$ has a unique spike decomposition, the components $\sl g, \wh g ,\sr g$ of this decomposition are referred to as the \textsf{prefix}, the \textsf{spike}, and the \textsf{postfix} of $g$, respectively, and we use the notation $\bn g =n$ (provided $\wh g\in\Si_n$) for the \textsf{height} of $g$ (we put $\bn g=0$ for all $g\in\Uc$).
\end{dfn}

\begin{dfn} \label{dfn:forest}
The \textsf{despiking graph} of a scale $\La=(\la,\Sib,\Ab)$ is the graph with the vertex set $G$ whose edges are all pairs $(g,\sl g)$, where $\sl g$ is the prefix of a spike decomposition of an element $g\in G$.
\end{dfn}

\begin{dfn} \label{dfn:lad}
A scale $\La=(\la,\Sib,\Ab)$ is called a \textsf{ladder} if
\begin{enumerate}[{\rm (i)}]
\item
any $g\in G$ admits at most one spike decomposition, i.e., the spike decomposition is unique for any $g\in G\sm\Uc$;
\item
the resulting \textsf{prefix map}
\begin{equation} \label{eq:prmap}
\uppi:g\mapsto \sl g
\end{equation}
which assigns to any $g\in G\sm\Uc$ the prefix $\sl g$ of its spike decomposition has the property that
\begin{equation} \label{eq:ineq}
\bn{\sl g} < \bn g \qquad \forall\,g\in G\sm\Uc \;.
\end{equation}
\end{enumerate}
\end{dfn}

Thus, if $\La$ is a ladder, then for any $g\in G$ the preimage set
$$
\uppi^{-1}(g) = \left\{g\myp \in G : \sl{g\myp} = g \right\}
$$
consists of all neighbours $g\myp$ of $g$ in the despiking graph whose height $\bn{g\myp}$ is strictly greater than $\bn g$, whereas for any $g\in G\sm\Uc$ the prefix $\uppi(g)=\sl g$ is the only neighbour of $g$ whose height is strictly smaller than $\bn g$.

\begin{rem} \label{rem:escape}
Inequality \eqref{eq:ineq} from \dfnref{dfn:lad} is automatically satisfied when the spiking sets $\Si_n$ ``escape to infinity'' fast enough, more precisely, if
\begin{equation} \label{eq:si}
\Si_n \cap \De_n^{3\la(n)} = \vn \qquad \forall\, n\ge 1
\end{equation}
(cf. \lemref{lem:5la} below). Indeed, under condition \eqref{eq:si} inequalities \eqref{eq:lan} imply, by the triangle inequality, that
$$
|g|_{\bn g} > \la(\bn g) \qquad \forall\, g\in G\sm\Uc \;.
$$
Thus, if $\bn{\sl g} \ge \bn g$ for a certain $g\in G\sm\Uc$, then by the monotonicity of the lengths $|\cdot|_n$ \eqref{eq:ni} and of the gauge function $\la$
$$
|\sl g|_{\bn g} \ge |\sl g|_{\scriptsize\bn{\sl g}} > \la(\bn{\sl g}) \ge \la(\bn g) \;,
$$
so that $|\sl g|_{\bn g} > \la(\bn g)$ in contradiction to \dfnref{dfn:spike}(ii).
\end{rem}

\subsection{The ladder forest}

Let us remind that a \textsf{tree} is a connected graph with no cycles. A tree endowed with a reference vertex (\textsf{root}) is called \textsf{rooted}. A \textsf{forest} is a graph (not connected in general) with no cycles, i.e., a graph whose connected components are trees. A forest is \textsf{rooted} if each of its trees is rooted.

\begin{prp} \label{prp:root}
For any ladder $\La$ on a group $G$
\begin{enumerate}[{\rm (i)}]
\item
the associated despiking graph $\Fc=\Fc(\La)$ is a forest;
\item
any tree of the forest $\Fc$ contains a unique unspiked vertex;
\item
all vertices of the forest $\Fc$ are of infinite order.
\end{enumerate}
\end{prp}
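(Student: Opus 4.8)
The plan is to treat the height $\bn{g}$ as a well-founded rank on the vertex set that is strictly decreased by the prefix map, and to read off all three claims from this single structural fact. First I would record what the ladder axioms (\dfnref{dfn:lad}) already hand us: by uniqueness of the spike decomposition the \emph{prefix map} $\uppi\colon G\sm\Uc\to G$, $g\mapsto\sl g$, is single-valued; by \eqref{eq:ineq} it strictly decreases height, $\bn{\uppi(g)}<\bn g$; and by \dfnref{dfn:forest} the edges of $\Fc$ are exactly the unordered pairs $\{g,\uppi(g)\}$ with $g\in G\sm\Uc$. Two consequences drive everything. Along every edge the two endpoints have \emph{distinct} heights (the prefix is strictly lower), and consequently each vertex $g$ has \emph{at most one} neighbour of height below $\bn g$, namely $\uppi(g)$, while all of its remaining neighbours — the members of $\uppi^{-1}(g)$ — have height strictly above $\bn g$.

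For (i) and (ii) I would use this ``unique downhill neighbour'' property. \emph{Acyclicity:} if $\Fc$ contained a cycle, pick a vertex $w$ of maximal height on it. Its two cycle-neighbours are distinct (a cycle has length $\ge 3$) and, by maximality together with the distinct-heights property, both have height below $\bn w$; hence each of them is the unique prefix $\uppi(w)$, forcing the two to coincide — a contradiction, so $\Fc$ is a forest. \emph{Roots:} iterating $\uppi$ from any $g$ produces a strictly decreasing sequence of heights among the non-negative integers, which must terminate, and termination can only occur at an element of $\Uc$; this defines $r(g)\in\Uc$, joined to $g$ by a path in $\Fc$ and hence lying in the tree of $g$. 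Since $r(g)=r(\uppi(g))$ whenever $g\notin\Uc$, the map $r$ is constant along edges and therefore on each tree, while $r(u)=u$ for every $u\in\Uc$. Thus each tree contains an unspiked vertex, and any two unspiked $u,v$ in one tree satisfy $u=r(u)=r(v)=v$, giving uniqueness.

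For (iii) the one quantitative input enters. Fix $g\in G$. By \eqref{eq:ni} the lengths satisfy $|g|_n\searrow|g|_\infty<\infty$, while $\la$ is unbounded and non-decreasing, so $|g|_n\le\la(n)$ for all sufficiently large $n$. For each such $n$ choose $\si\in\Si_n$ (non-empty by \dfnref{dfn:scale}); then $g\si=g\cdot\si\cdot e$ is a spike decomposition of height $n$, with spike $\si\in\Si_n$, prefix $g$, and trivial postfix, since $|g|_n\le\la(n)$ and $|e|_n=0\le\la(n)$ satisfy \eqref{eq:lan}. By uniqueness this is \emph{the} decomposition of $g\si$, so $\uppi(g\si)=g$ and $\bn{g\si}=n$: each such $g\si$ is a neighbour of $g$, and as the heights $n$ are pairwise distinct these neighbours are distinct. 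Hence $g$ has infinitely many neighbours, i.e.\ infinite degree.

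The soft parts (i) and (ii) rest only on $\bn{\cdot}$ being a finite, non-negative rank that is strictly decreased by a single-valued prefix map, so I expect no real difficulty there. The only genuine obstacle is the eventual inequality $|g|_n\le\la(n)$ underlying (iii); this is precisely where the exhaustion hypothesis \eqref{eq:asi} — through the monotone convergence \eqref{eq:ni} — and the unboundedness of the gauge function are actually used. The single point to check with care is that the constructed element $g\si$ admits no \emph{other} spike decomposition, but this is guaranteed verbatim by the uniqueness clause \dfnref{dfn:lad}(i).
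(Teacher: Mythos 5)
Your proposal is correct and follows essentially the same route as the paper: a maximal-height vertex argument for acyclicity, iteration of the strictly height-decreasing prefix map $\uppi$ to reach a root in $\Uc$, and the observation that $|g|_n\le\la(n)$ for all large $n$ (via \eqref{eq:ni} and the unboundedness of $\la$) to produce infinitely many elements of $\uppi^{-1}(g)$. The only cosmetic difference is in the uniqueness of the root: you argue that the root map $r$ is constant along edges and fixes $\Uc$, whereas the paper reruns the maximal-height argument on a backtracking-free path between two unspiked vertices — both are valid and rest on the same facts.
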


By using claim (ii) above we can, from now on, root the trees from the \textsf{ladder forest} $\Fc=\Fc(\La)$ by choosing the corresponding unspiked vertices as their roots. We denote by $\Tc_u\in\Fc$ the tree rooted at $u\in\Uc\subset G$, so that $\Fc=\{\Tc_u\}_{u\in\Uc}$. If $g\in\Tc_u$, then we also say that $u$ is the \textsf{root} of~$g$.

\begin{proof}[Proof of \prpref{prp:root}]
\;(i)\; Let $g_0,g_1,\dots,g_n=g_0$ be a cycle without backtracking in~$\Fc$. If $g_i$ is a maximal height vertex from this cycle, then $\bn{g_i}$ is strictly greater than both $\bn{g_{i-1}}$ and $\bn{g_{i+1}}$, whence $g_{i-1}=g_{i+1}$, which yields a contradiction.

(ii)\; The connected component of any spiked element of $G$ contains an unspiked one (it is produced by an iterative application of the prefix map $\uppi:g\mapsto\sl g$ until one reaches $\Uc$; the iterative process does eventually stop by condition (ii) from \dfnref{dfn:lad}). On the other hand, no connected component of $\Fc$ can contain more than one unspiked element (otherwise, by taking a path without backtracking which joins two unspiked elements and looking at its maximal height vertex, one arrives at a contradiction in precisely the same way as in the proof of~(i)).

(iii)\; By the definition of the spike decomposition (\dfnref{dfn:spike}), for any $g\in G$ the preimage set $\uppi^{-1}(g)$ of the prefix map \eqref{eq:prmap} consists of all products $g\myp=g \si h$ with
$\si\in\bigcup_n \Si_n$ and
\begin{equation} \label{eq:sin}
|g|_{\bn \si}, |h|_{\bn \si} \le \la\left(\bn\si\right) \;.
\end{equation}
[Let us remind that since $\La$ is a ladder, the spike of any $\si\in\Si_n$ is $\si$ itself, so that $\bn\si=n$.] Since the gauge function $\la$ is non-decreasing unbounded, by \eqref{eq:ni}
$$
n_0(g) = \min \left\{ n: |g|_n\le\la(n)  \right\} < \infty \;,
$$
and the inequality $|g|_n\le\la(n)$ holds whenever $n\ge n_0$, so that the set
\begin{equation} \label{eq:uppi}
\uppi^{-1}(g) = \bigl\{ g\si h: \si\in\Si_n \;\text{with}\; n\ge n_0(g) , \;\text{and}\; h\in G \;\text{with}\; |h|_n\le \la(n) \bigr\}
\end{equation}
is infinite for any $g\in G$.
\end{proof}

\prpref{prp:root} has the following two ramifications.

\subsubsection{A Markov property of the ladder forests} \label{sec:Markov}

The description of the preimage sets of the prefix map $\uppi$ that was used in the proof of \prpref{prp:root}(iii) leads to a recursive construction of the ladder forest $\Fc=\Fc(\La)$, in which its trees $\Tc_u$ are ``grown'' from their roots $u\in\Uc$ by applying the consecutive transitions $g\to g\myp\in\uppi^{-1}(g)$. The geodesic rays of $\Fc$ issued from a root $u\in\Uc$ are, therefore, the sequences
$$
u = g_0, g_1, \dots, g_k, \dots
$$
with
$$
g_k = g_0 \si_1 h_1 \si_2 h_2 \dots \si_k h_k \;, \qquad \si_k \in \bigcup_{i=1}^\infty \Si_i \qquad\qquad \text{for} \; k\ge 1 \;,
$$
where each increment $\si_k h_k$ satisfies conditions \eqref{eq:sin} with respect to the previous product $g_{k-1}$, see \figref{fig:forest} from the Introduction, i.e.,

\begin{subequations} \label{eq:incr}
\begin{empheq}[left={\text{for any}\; k\ge 1 \quad \empheqlbrace\;}]{align}
& \bn{\si_{k+1}} > \bn{\si_k} = \bn{g_k} \;,
\mytag{a}{$_{\nlab+1}$} \label{eq:sii} \\
& \notag \\
& |h_k|_{\bn{\si_k}} \le \la\left(\bn{\si_k} \right) \;,
\mytag{b}{$_{\nlab}$} \label{eq:r} \\
& \notag \\
& |g_{k-1}|_{\bn{\si_k}} = |g_0 \si_1 h_1 \dots \si_{k-1} h_{k-1}|_{\bn{\si_k}} \le \la\left(\bn{\si_k}\right) \;,
\mytag{c}{$_{\nlab}$} \label{eq:l}
\end{empheq}
with the ``initial condition''
\begin{equation}
g_0 \in \Uc \mytag{d}{} \label{eq:g0}
\end{equation}
\end{subequations}

\medskip

If the function $\la$ grows fast enough, then conditions \ref{eq:l} with $k\ge 2$ are actually redundant as they follow from conditions \ref{eq:r} on the increments $h_k$. Indeed, by the subadditivity of the word lengths \eqref{eq:length} and by their monotonicity \eqref{eq:ni}, for any $k\ge 2$
$$
\begin{aligned}
|g_{k-1}|_{\bn{\si_k}}
&= |g_0 \si_1 h_1 \dots \si_{k-1} h_{k-1}|_{\bn{\si_k}} \\
&\le |g_0|_{\bn{\si_k}} + |\si_1|_{\bn{\si_k}} + |h_1|_{\bn{\si_k}} + \dots + |\si_{k-1}|_{\bn{\si_k}} + |h_{k-1}|_{\bn{\si_k}} \\
&= (k-1) + |g_0|_{\bn{\si_1}} + |h_1|_{\bn{\si_1}} + \dots + |h_{k-1}|_{\bn{\si_{k-1}}} \;.
\end{aligned}
$$
Therefore, if the function $\la$ satisfied the inequality
\begin{equation} \label{eq:fast}
\la(n+1) \ge n + 2 \bigl( \la(1) + \la(2) + \dots + \la(n) \bigr) \qquad\forall\,n\ge 1 \;,
\end{equation}
then for any $k\ge 2$ conditions \renewcommand\nlab{1}\ref{eq:l} and
\ref{eq:r}, \renewcommand\nlab{2}\ref{eq:r}, \dots, \renewcommand\nlab{k-1}\ref{eq:r} imply the inequality
$$
\begin{aligned}
|g_{k-1}|_{\bn{\si_k}}
&\le (k-1) + \la\left(\bn{\si_1}\right) + \la\left(\bn{\si_1}\right) + \dots + \la\left(\bn{\si_{k-1}}\right) \\
&\le \la\left(\bn{\si_{k-1}}+1\right)
\le \la\left(\bn{\si_k}\right) \;,
\end{aligned}
$$
i.e., condition \renewcommand\nlab{k}\ref{eq:l}. Thus, we have

\begin{prp} \label{prp:Markov}
If the gauge function $\la$ of a ladder $\La=(\la,\Sib,\Ab)$ satisfies the fast growth condition \eqref{eq:fast}, then the geodesic rays issued from any root $u\in\Uc$ of the ladder forest $\Fc=\Fc(\La)$ are the sequences
$$
u = g_0, g_1, \dots, g_k, \dots
$$
with
$$
g_k = g_0 \si_1 h_1 \si_2 h_2 \dots \si_k h_k \;, \qquad k\ge 1 \;,
$$
where $\si_k\in\bigcup_{i=1}^\infty \Si_i$ and $h_k\in G$ satisfy conditions \textup{\ref{eq:sii}, \ref{eq:r}}\, for all $k\ge 1$ and condition \renewcommand\nlab{1} \textup{\ref{eq:l}}. Therefore, given an initial segment $g_0,g_1,\dots,g_{k-1},\;k\ge 2$, the set of all admissible points  $g_k=g_{k-1}\si_k h_k$ depends on $g_{k-1}$ only (the \textsf{Markov property}), and the geodesic rays in $\Fc$ can be described by the
``transition rules'' \renewcommand\nlab{k}
\begin{subequations}\label{eq:Markov}
\begin{empheq}[left={\textup{for any}\; k\ge 2 \quad\empheqlbrace\;}]{align}
& \bn{\si_k} > \bn{ g_{k-1}} \;,
\mytag{a}{$_{\nlab}$} \label{eq:Markova} \\
\notag\\
& |h_k|_{\bn{\si_k}} \le \la\left(\bn{\si_k} \right)
\mytag{b}{$_{\nlab}$} \label{eq:Markovb} \;.
\end{empheq}
and the ``initial conditions'' \renewcommand\nlab{1}
\begin{empheq}[left={\empheqlbrace\;}]{align}
& |h_1|_{\bn{\si_1}} \le \la\left(\bn{\si_1} \right) \;,
\mytag{b}{$_{\nlab}$} \label{eq:Markovb1} \\
\notag\\
& |g_0|_{\bn{\si_1}} \le \la\left(\bn{\si_1}\right) \;.
\mytag{c}{$_{\nlab}$} \label{eq:0} \\
\notag \\
& g_0 \in \Uc \;.
\mytag{d}{$_{\phantom{k}}$} \label{eq:Markov0}
\end{empheq}
\end{subequations}
\end{prp}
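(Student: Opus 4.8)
The plan is to start from the description of the geodesic rays of $\Fc$ that is already in hand from the recursive ``growing'' construction: a sequence $u=g_0,g_1,\dots$ with $g_k=g_0\si_1 h_1\cdots\si_k h_k$ is a geodesic ray issued from the root $u\in\Uc$ precisely when it satisfies conditions \ref{eq:sii}, \ref{eq:r}, \ref{eq:l} for every $k\ge 1$ together with the initial condition \ref{eq:g0}. This characterization follows directly from the computation of the preimage sets $\uppi^{-1}(g)$ carried out in the proof of \prpref{prp:root}(iii). The whole content of the proposition is therefore to show that, once the fast growth condition \eqref{eq:fast} is imposed, the constraints \ref{eq:l} with $k\ge 2$ become redundant, so that what survives is exactly the system of transition rules \eqref{eq:Markov}.

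First I would establish the redundancy of \ref{eq:l} for $k\ge 2$ by a single length estimate. Expanding $g_{k-1}=g_0\si_1 h_1\cdots\si_{k-1}h_{k-1}$ and applying subadditivity of the length $|\cdot|_{\bn{\si_k}}$, I bound $|g_{k-1}|_{\bn{\si_k}}$ by the sum of the lengths of its $2k-1$ factors. Each $\si_i$ with $i<k$ satisfies $\bn{\si_i}<\bn{\si_k}$, hence lies in $\De_{\bn{\si_k}}$ and contributes $|\si_i|_{\bn{\si_k}}=1$, which accounts for a summand $k-1$. By the monotonicity \eqref{eq:ni} of the lengths the remaining factors obey $|g_0|_{\bn{\si_k}}\le|g_0|_{\bn{\si_1}}$ and $|h_i|_{\bn{\si_k}}\le|h_i|_{\bn{\si_i}}$, and conditions \ref{eq:l} at $k=1$ together with \ref{eq:r} at the indices $i\le k-1$ then replace all of these by the corresponding values of $\la$.

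The step I expect to carry the weight is the final combinatorial passage from this sum to $\la(\bn{\si_k})$, which is where the fast growth condition is used in a sharply calibrated way; the delicate part is getting the bookkeeping to match \eqref{eq:fast} exactly. The key observation is that the heights $\bn{\si_1}<\bn{\si_2}<\dots<\bn{\si_{k-1}}$ are \emph{distinct} positive integers, all at most $m:=\bn{\si_{k-1}}$. Hence on one hand $k-1\le m$, and on the other hand $\la(\bn{\si_1})+\dots+\la(\bn{\si_{k-1}})\le\la(1)+\dots+\la(m)$, with the extra term $\la(\bn{\si_1})$ coming from $g_0$ absorbed into a second copy of the same sum. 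Thus the whole expression is at most $m+2\bigl(\la(1)+\dots+\la(m)\bigr)$, which is precisely the right-hand side of \eqref{eq:fast} at $n=m$; therefore it is dominated by $\la(m+1)=\la(\bn{\si_{k-1}}+1)\le\la(\bn{\si_k})$, which is condition \ref{eq:l} for index $k$.

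Finally I would translate the surviving conditions into the transition rules. Condition \ref{eq:sii}, reindexed and using $\bn{g_{k-1}}=\bn{\si_{k-1}}$, becomes \ref{eq:Markova}; condition \ref{eq:r} becomes \ref{eq:Markovb} for $k\ge 2$ and \ref{eq:Markovb1} for $k=1$; the surviving instance of \ref{eq:l} at $k=1$ becomes \ref{eq:0}; and \ref{eq:g0} is \ref{eq:Markov0}. The Markov property is then immediate: the two constraints \ref{eq:Markova} and \ref{eq:Markovb} on an admissible successor $g_k=g_{k-1}\si_k h_k$ refer only to the intrinsic height $\bn{g_{k-1}}$ of $g_{k-1}$ and to the new data $\si_k,h_k$, and not to the earlier portion of the ray, so the set of admissible points $g_k$ depends on $g_{k-1}$ alone.
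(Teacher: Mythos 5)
Your proposal is correct and follows essentially the same route as the paper: the same subadditivity-plus-monotonicity estimate reducing \renewcommand\nlab{k}\ref{eq:l} for $k\ge 2$ to the conditions \ref{eq:r} and \renewcommand\nlab{1}\ref{eq:l}, and the same use of the fast growth condition \eqref{eq:fast}. In fact you make explicit the one point the paper leaves tacit, namely that the distinctness of the heights $\bn{\si_1}<\dots<\bn{\si_{k-1}}$ is what lets the bound $(k-1)+2\la(\bn{\si_1})+\la(\bn{\si_2})+\dots+\la(\bn{\si_{k-1}})$ be dominated by the right-hand side of \eqref{eq:fast} at $n=\bn{\si_{k-1}}$.
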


\begin{rem}
Fast growth condition \eqref{eq:fast} is, for instance, satisfied for the function $\la(n)=4^n$, or, more generally, if $\la(n)\ge 4^n$ and the ratio $\la(n)/4^n$ is non-decreasing.
\end{rem}

\subsubsection{Constrained subforests}

If all the entries $\Si_i, A_i$ of a ladder $\La=(\la,\Sib,\Ab)$ are finite, then there is a \emph{locally finite} subforest of the ladder forest $\Fc=\Fc(\La)$ (with an infinite number of trees though) still suitable for our further purposes.

\begin{dfn} \label{dfn:constr}
Given a ladder $\La$ on a group $G$, the \textsf{constrained ladder forest} $\Fc_\ka=\Fc_\ka(\La)$ determined by a non-decreasing \textsf{constraining function} $\ka:\ZZ_+\to\ZZ_+$ is the graph with the vertex set $G$ whose non-oriented edges are the pairs $\left(g,\sl g\right)$, where $g$ is spiked, $\sl g$ is the prefix of its spike decomposition, and, additionally, $\bn g \le \ka\left( \bn{\sl g} \right)$.
\end{dfn}

In other words, $\Fc_\ka$ is obtained from the ladder forest $\Fc$ by erasing all edges $\left(g,\sl g\right)$ with $\bn g > \ka\left( \bn{ \sl g } \right)$. The description \eqref{eq:uppi} of the preimage sets of the prefix map \mbox{$\uppi:g\mapsto\sl g$} in the proof of \prpref{prp:root}(iii) then implies

\begin{lem} \label{lem:constr}
If a ladder $\La=(\la,\Sib,\Ab)$ is such that all its entries $\Si_i,A_i$ are finite, then the constrained ladder forest $\Fc_\ka(\La)$ is locally finite for any constraining function $\ka$.
\end{lem}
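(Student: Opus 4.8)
The plan is to show local finiteness by bounding, for each vertex $g \in G$, the number of edges of $\Fc_\ka(\La)$ incident to $g$. Since $\Fc_\ka$ is a subgraph of the ladder forest $\Fc$, the edges at $g$ come in two kinds: the single edge to the prefix $\uppi(g) = \sl g$ (present only when $g$ is spiked and the constraint $\bn g \le \ka(\bn{\sl g})$ holds), and the edges to the ``children'' of $g$, i.e., the vertices $g\myp \in \uppi^{-1}(g)$ surviving the constraint. The first kind contributes at most one edge, so the whole problem reduces to bounding the number of constrained children of $g$.

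First I would recall the explicit description \eqref{eq:uppi} of the preimage set from the proof of \prpref{prp:root}(iii): the neighbours $g\myp = g\si h$ of $g$ with $\bn{g\myp} > \bn g$ are indexed by a spike $\si \in \Si_n$ (with $n \ge n_0(g)$) and a multiplier $h$ with $|h|_n \le \la(n)$. In the constrained forest we additionally impose $\bn{g\myp} = n \le \ka(\bn g)$. Writing $N = \ka(\bn g)$, this forces $n$ to range only over the finite set $\{1, 2, \dots, N\}$, so the spike $\si$ lies in the finite union $\Si_1 \cup \dots \cup \Si_N$, which is finite because all the $\Si_i$ are assumed finite.

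The remaining step is to bound, for each admissible $n \le N$, the number of choices of $h$. The constraint is $|h|_n \le \la(n)$, i.e., $h \in \De_n^{\la(n)}$. Since $\De_n$ is the union of finitely many finite sets $\Si_i^{\pm 1}, A_i^{\pm 1}$ with $i < n$ (by \dfnref{dfn:scale} together with the finiteness hypothesis), $\De_n$ is itself a finite set, and hence the ball $\De_n^{\la(n)}$ is finite. Summing the finite count $\#(\De_n^{\la(n)}) \cdot \#\Si_n$ over the finitely many $n \in \{1,\dots,N\}$ gives a finite total, so $\uppi^{-1}(g)$ intersected with the constraint is finite. Adding the at most one prefix edge, the degree of $g$ in $\Fc_\ka$ is finite, and since $g$ was arbitrary, $\Fc_\ka(\La)$ is locally finite.

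I do not anticipate a genuine obstacle here; the statement is essentially a finiteness bookkeeping argument once the description \eqref{eq:uppi} is in hand. The only point requiring a little care is that the bound must be uniform enough to conclude finiteness of each individual degree — which it is, since for a fixed $g$ the constraining function caps the relevant heights at the single finite value $\ka(\bn g)$, collapsing what was an infinite union in $\Fc$ into a finite one in $\Fc_\ka$. It is precisely the constraint $\bn g \le \ka(\bn{\sl g})$ that does all the work, converting the infinite degrees guaranteed by \prpref{prp:root}(iii) into finite ones.
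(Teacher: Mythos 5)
Your argument is correct and is exactly the one the paper intends: the paper's entire proof is the one-line observation that the description \eqref{eq:uppi} of the preimage sets $\uppi^{-1}(g)$ implies the lemma, and your write-up simply fleshes out that counting (the constraint caps the height at $\ka(\bn g)$, leaving finitely many spikes $\si$ and, for each, a finite ball $\De_n^{\la(n)}$ of multipliers $h$, plus the single prefix edge). No gaps.
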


\begin{rem}
\prpref{prp:Markov} is applicable to the constrained forests as well. One just has to replace condition
\ref{eq:Markova} from the description of the Markov transitions with the condition
$$
\bn{ g_{k-1}} < \bn{\si_k} \le \ka(\bn{ g_{k-1}}) \;.
$$
\end{rem}

\subsection{Boundaries of trees and forests} \label{sec:bnd}

The \textsf{boundary} $\p\Tc$ of a rooted tree $(\Tc,o)$ can be identified with the set of geodesic rays in $\Tc$ issued from the root $o$. The boundary carries the usual topology of pointwise convergence, which is compact if and only if the tree is locally finite. A sequence of vertices $\tau_n\in\Tc$ \textsf{converges} to a boundary point $\tau_\infty\in\p\Tc$ if the geodesic segments $[o,\tau_n]$ pointwise converge to the geodesic ray representing $\tau_\infty$.

Let us remind that two sequences $\xb=(x_0,x_1,\dots)$ and $\xb'=(x'_0,x'_1,\dots)$ are called \textsf{asymptotically equivalent} (notation: $\xb\sim\xb'$) if there exist integers $n,n'\ge 0$ such that
\begin{equation} \label{eq:eq}
x_{n+i}=x'_{n'+i} \qquad \forall\, i\ge 0 \;.
\end{equation}
We denote the \textsf{asymptotic equivalence class} of a sequence $\xb$ by $[\xb]$. In the ``rootless'' description the boundary $\p\Tc$ of a tree $\Tc$ is defined as the set of the classes of asymptotic equivalence of geodesic rays in $\Tc$. It is the presence of a root $o\in\Tc$ that allows one to choose the geodesic ray $\ov\xi=[o,\xi)$ issued from the root $o$ as the ``canonical representative'' of each equivalence class $\xi\in\p\Tc$.

The \textsf{boundary $\p\Fc$ of a forest $\Fc$} is the disjoint union
$$
\p\Fc=\bigsqcup_{\Tc\in\Fc} \p\Tc
$$
of the boundaries of all trees $\Tc$ which constitute the forest. The topology of $\p\Fc$ is that of a disjoint union of the topological spaces $\p\Tc$. In the same way as for a single tree, by making a forest \textsf{rooted} (i.e., by rooting each tree of the forest) one can identify the boundary $\p\Fc$ with the set of all geodesic rays issued from these roots. A sequence of vertices $\tau_n$ \textsf{converges} to a boundary point $\tau_\infty\in\p\Fc$ if the sequence is eventually contained in a single tree $\Tc\in\Fc$, and $\tau_n\to\tau_\infty\in\p\Tc\subset\p\Fc$ in this tree. Given a boundary point $\xi\in\p\Fc$, the tree $\Tc\in\Fc$ whose boundary contains $\xi$ will be called the \textsf{tree of} $\xi$, and the root of $\Tc$ will be called the \textsf{root of} $\xi$.

Let us now assume that the vertex set of our forest $\Fc$ is a countable group $G$. We emphasize that we do not impose any \emph{a priori} conditions on the relationship between the graph structure of $\Fc$ and the group structure of $G$.

The group $G$ naturally acts by entry-wise translations
\begin{equation} \label{eq:action}
(g_0,g_1,\dots) \mapsto (g g_0,g g_1,\dots)
\end{equation}
on sequences $(g_0,g_1,\dots)$ in $G$ and on their classes of asymptotic equivalence. Therefore,
given a forest $\Fc$ on $G$, its boundary $\p\Fc$ (as a collection of the asymptotic equivalence classes of geodesic rays) is moved by any group element $g\in G$ to the boundary $\p g\Fc=g\p\Fc$ of the translated forest $g\Fc$. However, the boundary $\p\Fc$ need \emph{not} be preserved by the action of $G$, see \exref{ex:zz} and \exref{ex:free} below.

\begin{rem}\label{rem:stab}
It is straightforward to describe the \textsf{stabilizer subgroups}
$$
\Stab\xi = \{ g\in G: g\xi=\xi \} \;, \qquad \xi\in\p\Fc \;.
$$
Namely, given a countable group $G$, a forest $\Fc$ on $G$, and $g\in G,\xi\in\p\Fc$, one has $g\xi=\xi$ if and only for any ($\equiv$ a certain) geodesic ray $\gab=(\ga_0,\ga_1,\dots)$ converging to $\xi$ there is an integer $t$ such that $g\ga_n = \ga_{n+t}$ for all sufficiently large $n$.
\end{rem}

\begin{dfn} \label{dfn:gi}
The \textsf{$G$-invariant part} $\p_G\Fc$ of the boundary $\p\Fc$ of a forest $\Fc$ on a countable group $G$ is the intersection
$$
\p_G\Fc = \bigcap_{g\in G} g\p\Fc = \bigcap_{g\in G} \p g\Fc \;.
$$
In other words, $\p_G\Fc$ is the maximal subset of $\p\Fc$ invariant under the action of $G$ by translations, i.e., it consists of the equivalence classes of all geodesic rays $\gab=(\ga_0,\ga_1,\dots)$ in $\Fc$ with the property that any translate $g\gab,\; g\in G$, is asymptotically equivalent to a certain geodesic ray in $\Fc$ (which, of course, depends on $g$ and on $\gab$).
\end{dfn}

\begin{rem} \label{rem:bdry}
If the forest $\Fc$ is rooted, then, as we have already noticed, any boundary point $\xi\in\p\Fc$ can be uniquely represented by the geodesic ray $\gab=[o,\xi)=(\ga_0,\ga_1,\dots)$ issued from the corresponding root $o=\ga_0$. In these terms the $g$-translate $\xi'=g\xi$ belongs to $\p\Fc$ if and only if the translates $(g\ga_n,g\ga_{n+1},\dots)$ of the truncated rays $[\ga_n,\xi)=(\ga_n,\ga_{n+1},\dots)$ are also geodesic rays in $\Fc$ for all sufficiently large $n$. If this is the case, let $o'$ denote the root of $\xi'$, i.e., the root of the common tree $\Tc'$ of the vertices $g\ga_n,g\ga_{n+1},\dots$\;. Then the geodesics $[o',g\ga_n]$ stabilize, and their concatenation with the ray $(g\ga_n,g\ga_{n+1},\dots)$ coincides, for sufficiently large $n$, with the geodesic ray $\gab'=[o',\xi')$ joining the root $o'$ with the boundary point $\xi'=g\xi$, see \figref{fig:action}. Thus, the sequences of \textsf{increments} $\ga_n^{-1}\ga_{n+1}$ and ${\ga'_n}^{-1}\ga'_{n+1}$ along the rays $\gab=[o,\xi)$ and $\gab'=[o',g\xi)$ are asymptotically equivalent, and the action of $g$ amounts to a ``rewriting'' (with a possible length change) of a certain initial part of the sequence of increments $\bigl(\ga_n^{-1}\ga_{n+1}\bigr)$.
\end{rem}

\begin{figure}[h]
\begin{center}
\psfrag{a}[][]{$o'=\ga'_0$}
\psfrag{b}[][]{$\ga'_1$}
\psfrag{c}[][]{$\ga'_{n'-1}$}
\psfrag{d}[][]{$\ga'_{n'}=g\ga_n$}
\psfrag{e}[][]{$\qquad\ga'_{n'+1}=g\ga_{n+1}$}
\psfrag{f}[][r]{$\qquad\xi'=g\xi$}
\psfrag{g}[][]{$\p\Tc'$}
\psfrag{h}[][]{$g\ga_0$}
\psfrag{i}[][]{$g\ga_1$}
\psfrag{j}[][]{$g\ga_{n-1}$}
\psfrag{k}[][]{$o=\ga_0$}
\psfrag{l}[][]{$\ga_1$}
\psfrag{m}[][]{$\ga_{n-1}$}
\psfrag{n}[][]{$\ga_n$}
\psfrag{o}[][]{$\ga_{n+1}$}
\psfrag{p}[][]{$\xi$}
\psfrag{q}[][]{$\p\Tc$}
\includegraphics[scale=.6]{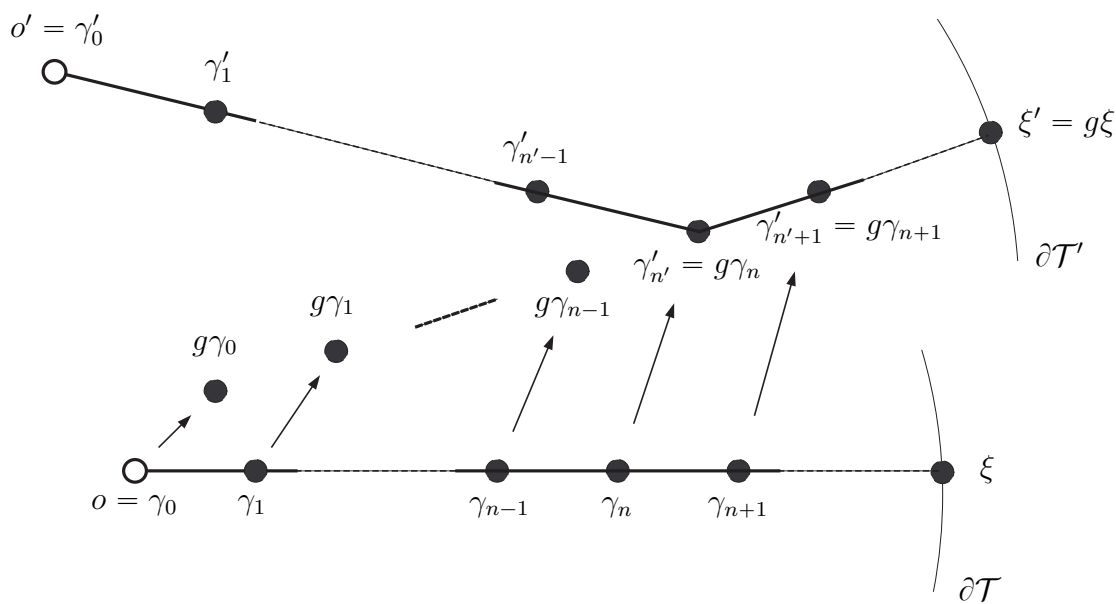}
\end{center}
\caption{The boundary action.}
\label{fig:action}
\end{figure}

The discussion from \remref{rem:bdry} easily implies that the subset $\p_G\Fc\subset\p\Fc$ is Borel. However, in general it need not be closed in $\p\Fc$, see \exref{ex:free}.

\begin{ex} \label{ex:zz}
The forest $\Fc$ on the group $G=\ZZ^2$ presented in \figref{fig:zz} on the left consists of the ``vertical'' bilateral geodesics $\Tc_i$ indexed with the integers $i\neq 0,1$ and of a 3-ended tree~$\Tc$ with the vertex set $\{(i,j): i=0,1,\;j\in\ZZ\}$. The boundary $\p\Tc_i$ of each $\Tc_i$ consists of two points $\xi_i^-,\xi_i^+$ represented by the geodesic rays
$\bigl( (i,0), (i,-1), (i,-2), \dots \bigr)$ and $\bigl( (i,0), (i,1), (i,2), \dots \bigr)$, respectively, whereas the boundary $\p\Tc$ of $\Tc$ consists of two points $\xi_0^-,\xi_1^-$ defined as above and a third point $\xi^+$ represented by the ``zigzag'' geodesic ray
$\bigl( (0,0), (1,1), (0,1), (1,2), (0,2), (1,3),\dots \bigr)$. The $\ZZ^2$-invariant part of the boundary of~$\Fc$ consists of a single $\ZZ^2$-orbit $\{\xi_i^-\}_{i\in\ZZ}$. A slight modification of this construction produces a forest on $\ZZ^2$, for which the $\ZZ^2$-invariant part of the boundary is empty, see the right side of \figref{fig:zz}.
\end{ex}

\begin{figure}[h]
\begin{center}
\includegraphics[scale=.4]{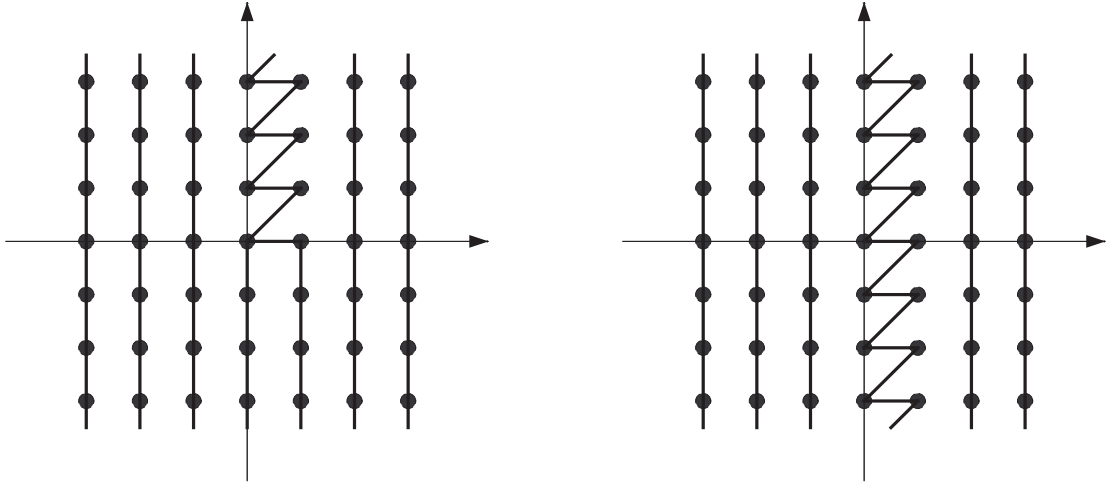}
\end{center}
\caption{Forests on $\ZZ^2$ whose boundaries are not preserved by the group action.}
\label{fig:zz}
\end{figure}

\begin{ex} \label{ex:free}
Let $G=\Fo_2=\langle a,b \rangle$ be the free group with the free generating set $\{a,b\}$. The associated Cayley graph is a homogeneous tree of degree 4. We denote its boundary by $\p \Fo_2$. By erasing the edges between all the consecutive powers of $a$ one obtains a forest $\Fc=\{\Tc_i\}_{i\in\ZZ}$ on $\Fo_2$, where $\Tc_i$ denotes the connected component of $a^i$, see \figref{fig:free}. Then the boundary $\p\Fc$ is obtained from $\p \Fo_2$ by removing the boundary points $\xi^\pm$ corresponding to the infinite words $a^\infty$ and $a^{-\infty}$, respectively, whereas its $\Fo_2$-invariant part $\p_{\Fo_2}\Fc$ is obtained from $\p \Fo_2$ by removing the whole $\Fo_2$-orbits of the points~$\xi^\pm$.
\end{ex}

\begin{figure}[h]
\begin{center}
\psfrag{a}[cl][cl]{$a^{-2}$}
\psfrag{b}[cl][cl]{$a^{-1}$}
\psfrag{c}[cl][cl]{$e$}
\psfrag{d}[cl][cl]{$a$}
\psfrag{e}[cl][cl]{$a^2$}
\psfrag{f}[cl][cl]{$a^\infty$}
\psfrag{g}[cl][cl]{$a^{-\infty}$}
\psfrag{p}[cl][cl]{$\Tc_{-2}$}
\psfrag{q}[cl][cl]{$\Tc_{-1}$}
\psfrag{r}[cl][cl]{$\Tc_0$}
\psfrag{s}[cl][cl]{$\Tc_1$}
\psfrag{t}[cl][cl]{$\Tc_2$}
\includegraphics[scale=.6]{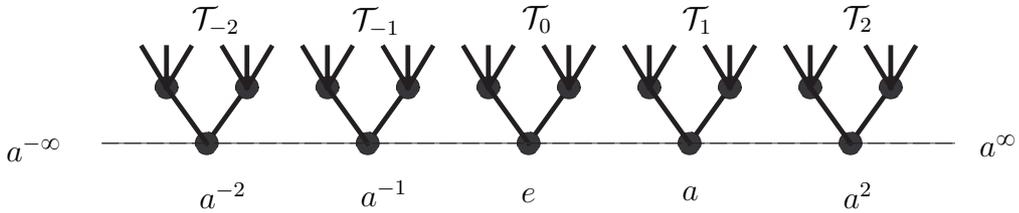}
\end{center}
\caption{A forest on the free group with two generators $\Fo_2$, for which the $\Fo_2$-invariant part of the boundary is not closed.}
\label{fig:free}
\end{figure}

\subsection{The boundary action} \label{sec:bdract}

Let now $\La=(\la,\Sib,\Ab)$ be a ladder on a group $G$. Since the associated forest $\Fc=\Fc(\La)$ is rooted at the unspiked vertices (see \prpref{prp:root}), we can identify the boundary $\p\Fc$ with the set of all geodesic rays $\gab=(\ga_0,\ga_1,\dots)$ issued from the roots $\ga_0\in\Uc$. These rays are characterized by the property that for any $n\ge 1$ the prefix $\sl\ga_n$ in the spike decomposition of the vertex $\ga_n$ is precisely the preceding vertex~$\ga_{n-1}$. In particular,
$$
\bn{ \ga_{n-1} } < \bn{ \ga_n } \qquad \text{and} \qquad
\left| \ga_{n-1} \right|_{\bn{ \ga_n }} \le \la (\bn{ \ga_n }) \qquad\forall\,n\ge 1 \;.
$$

\begin{prp} \label{prp:free}
If $\Fc=\Fc(\La)$ is the forest determined by a ladder $\La=(\la,\Sib,\Ab)$ on a group $G$, then the $G$-invariant part $\p_G\Fc$ of its boundary is non-empty, and the action of the group $G$ on $\p_G\Fc$ is free, i.e., $\Stab\xi=\{e\}$ for any $\xi\in\p_G\Fc$.
\end{prp}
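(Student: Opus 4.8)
The plan is to prove the two assertions separately: first that $\p_G\Fc$ is non-empty, and then that the $G$-action on it is free. For the non-emptiness, I would exhibit an explicit $G$-invariant boundary point, or rather a whole family of them, by controlling the ``slack'' in the length condition \eqref{eq:l}. Recall from the introduction that a geodesic ray $\gab=(\ga_0,\ga_1,\dots)$ issued from a root satisfies $|\ga_{k-1}|_{\bn{\ga_k}}\le\la(\bn{\ga_k})$ for all $k$. I would single out the set $\p_\sh\Fc\subset\p\Fc$ of those rays for which this inequality holds with ever-increasing room, namely
$$
\la\bigl(\bn{\ga_k}\bigr)-\left|\ga_{k-1}\right|_{\bn{\ga_k}}\longrightarrow\infty \;,
$$
and show that $\p_\sh\Fc$ is non-empty and $G$-invariant, whence $\p_\sh\Fc\subset\p_G\Fc$.

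To see that $\p_\sh\Fc$ is $G$-invariant, fix $\xi\in\p_\sh\Fc$ with canonical ray $\gab=[o,\xi)$ and fix $g\in G$. By \remref{rem:bdry}, establishing $g\xi\in\p\Fc$ amounts to checking that the translated truncated rays $(g\ga_n,g\ga_{n+1},\dots)$ are again geodesic rays in $\Fc$ for all large $n$; concretely, I must verify that the spike decomposition of each $g\ga_k$ has height $\bn{\ga_k}$, the same spike $\si_k$, and prefix $g\ga_{k-1}$. The key point is that left-translation by the \emph{fixed} element $g$ changes the prefix length only by a bounded amount: since $|g|_\infty<\infty$ by \eqref{eq:asi}, for $k$ large enough $g$ is a word in $\De_{\bn{\ga_k}}$, so by subadditivity and monotonicity \eqref{eq:ni}
$$
\left|g\ga_{k-1}\right|_{\bn{\ga_k}}\le|g|_{\bn{\ga_k}}+\left|\ga_{k-1}\right|_{\bn{\ga_k}}\le|g|_\infty+\left|\ga_{k-1}\right|_{\bn{\ga_k}}\;.
$$
Because the slack $\la(\bn{\ga_k})-|\ga_{k-1}|_{\bn{\ga_k}}$ tends to infinity, this bounded perturbation still leaves $|g\ga_{k-1}|_{\bn{\ga_k}}\le\la(\bn{\ga_k})$ intact for large $k$, so condition \eqref{eq:lan} for the decomposition $g\ga_k=(g\ga_{k-1})\cdot\si_k\cdot\sr{\ga_k}$ persists. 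Uniqueness of spike decompositions (\dfnref{dfn:lad}(i)) then forces this to be \emph{the} decomposition, so $g\ga_{k-1}=\sl{g\ga_k}$ and the translated tail is genuinely a geodesic ray. Moreover the increments are literally preserved ($\ga_{k-1}^{-1}\ga_k=(g\ga_{k-1})^{-1}(g\ga_k)$), so $g\xi$ again lies in $\p_\sh\Fc$, giving $G$-invariance. Non-emptiness of $\p_\sh\Fc$ itself follows from \prpref{prp:root}(iii): starting from any unspiked $u$ and repeatedly invoking the description \eqref{eq:uppi} of $\uppi^{-1}$, at each stage I may choose the next spike $\si_k\in\Si_{n_k}$ with $n_k$ growing as fast as I like and $h_k=e$, which makes the slack grow without bound.

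For freeness, suppose $g\in\Stab\xi$ for some $\xi\in\p_G\Fc$. Using \remref{rem:stab}, $g\xi=\xi$ means there is an integer $t$ with $g\ga_n=\ga_{n+t}$ for all large $n$, where $\gab=[o,\xi)$ is the canonical ray. I first argue $t=0$: the heights $\bn{\ga_n}$ are strictly increasing, while translation by the fixed $g$ perturbs lengths boundedly as above, so $g\ga_n$ and $\ga_n$ lie at comparable heights; a shift by $t\neq0$ would clash with the strict monotonicity $\bn{\ga_{n-1}}<\bn{\ga_n}$ combined with the fact that, for large $n$, $\bn{g\ga_n}=\bn{\ga_n}$ (again because left-multiplication by $g$ is a bounded-length perturbation that cannot change the spike, by uniqueness). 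Hence $t=0$ and $g\ga_n=\ga_n$ for all large $n$, which immediately gives $g=\ga_n\ga_n^{-1}=e$. Thus $\Stab\xi=\{e\}$, which is the asserted freeness. The main obstacle I anticipate is the height-matching step in both parts: carefully showing that for a \emph{fixed} $g$ and large $k$ the spike decomposition of $g\ga_k$ has exactly the same height and spike as that of $\ga_k$, which is precisely where the uniqueness clause of the ladder definition and the growing-slack condition defining $\p_\sh\Fc$ must be combined.
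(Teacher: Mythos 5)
Your treatment of the non-emptiness half is essentially the paper's own argument: you introduce the same subset $\p_\sh\Fc$ of rays with unboundedly growing slack $\la(\bn{\ga_n})-|\ga_{n-1}|_{\bn{\ga_n}}\to\infty$, construct points of it by iterating the description of $\uppi^{-1}$ from \prpref{prp:root}(iii), and prove its $G$-invariance by the subadditivity estimate $|g\ga_{n-1}|_{\bn{\ga_n}}\le|g|_{\bn{\ga_n}}+|\ga_{n-1}|_{\bn{\ga_n}}$ combined with the uniqueness clause of \dfnref{dfn:lad}. That part is correct.

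The freeness half, however, has a genuine gap. The proposition asserts $\Stab\xi=\{e\}$ for \emph{every} $\xi\in\p_G\Fc$, but your key step --- that $\bn{g\ga_n}=\bn{\ga_n}$ for all large $n$ --- is justified only by the ``bounded-length perturbation'' argument, which requires the growing-slack condition defining $\p_\sh\Fc$. For a general $\xi\in\p_G\Fc$ the ray only satisfies $|\ga_{n-1}|_{\bn{\ga_n}}\le\la(\bn{\ga_n})$, possibly with zero slack infinitely often; then $|g\ga_{n-1}|_{\bn{\ga_n}}$ may exceed $\la(\bn{\ga_n})$, so that $(g\ga_{n-1})\cdot\wh{\ga_n}\cdot\sr{\ga_n}$ is not a legal spike decomposition at all, and the uniqueness clause gives you nothing: a priori $g\ga_n$ could carry a spike of an entirely different height. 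You yourself flag this height-matching step as the crux and propose to settle it with ``the growing-slack condition defining $\p_\sh\Fc$'', which is simply not available off $\p_\sh\Fc$ (and $\p_G\Fc$, being the \emph{maximal} invariant subset, may well be strictly larger). The paper closes this gap by a purely combinatorial observation that avoids all length estimates: if $h=\sl h\cdot\wh h\cdot\sr h$ is the spike decomposition of $h$, then $\wh h\cdot\sr h$ is the spike decomposition of $(\sl h)^{-1}h$, so the height of a vertex on a rooted geodesic ray equals the height of its increment $\ga_{n-1}^{-1}\ga_n$. Since $\xi\in\p_G\Fc$ means, by \remref{rem:bdry}, that the ray representing $g\xi$ eventually consists of the translates $g\ga_n$ with the \emph{same increments}, the equality $\bn{g\ga_n}=\bn{\ga_n}$ follows at once; strict monotonicity of the heights along a rooted ray then forces the shift $t$ of \remref{rem:stab} to vanish and $g=e$, exactly as you conclude. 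Your argument becomes correct once this substitution is made.
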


\begin{proof}
Let us first notice that if $g = \sl g \cdot \wh g \cdot \sr g$ is the spike decomposition of $g\in G$, then $\wh g \cdot \sr g$ is the spike decomposition of $(\sl g)^{-1} g$, so that, in particular, $g$ and $(\sl g)^{-1} g$ have the same spike $\wh g$ and the same height $\bn g  =\bn{ (\sl g)^{-1} g }$. Thus, if $\gab=(\ga_0,\ga_1,\dots)$ is a geodesic ray issued from a root $\ga_0\in\Uc$, then
$$
\bn{ \ga_n } = \bn{ (\sl{\ga_n})^{-1} \ga_n } = \bn{ \ga_{n-1}^{-1}\ga_n } \qquad\forall\,n\ge 1\;.
$$

If $\gab$ represents a point $\xi\in\p_G\Fc$, then for any $g\in G$ the ray representing the translate~$g\xi$ eventually consists of the translates $g\ga_n$ with the same increments as the ray $\gab$ (see \remref{rem:bdry} and \figref{fig:action}), whence $\bn{ g\ga_n }=\bn{ \ga_n }$ for all sufficiently large $n$. Since the height $\bn\cdot$ is strictly increasing along any geodesic ray issued from $\Uc$, it implies that if $g\xi=\xi$, then eventually $g\ga_n=\ga_n$ (see \remref{rem:stab}), and therefore $g=e$.

For showing that the $G$-invariant part $\p_G\Fc$ of the boundary is non-empty we introduce a subset $\p_\sh\Fc\subset\p\Fc$ which consists of all boundary points $\xi\in\p\Fc$ for which the geodesic ray $\gab=(\ga_0,\ga_1,\dots)$ joining $\xi$ with its root $\ga_0$ has the property that
\begin{equation} \label{eq:minus}
\la\left(\bn{ \ga_n }\right) - \left| \ga_{n-1} \right|_{\bn{ \ga_n }}  \xrightarrow[n\to\infty]{} \infty \;.
\end{equation}

In the same way as in the proof of \prpref{prp:root}(iii) it is easy to see that for any $g\in G$ there is an integer $n$ such that $|g|_n \le \la(n)/2$. Then for any $\si\in\Si_n$ the product $g\myp =g\si$ has the property that $\bn{ g\myp }=n$, $\sl{g\myp}=g$, and
$$
\la(\bn{ g\myp }) - |g|_{\bn{ g\myp }} = \la(n) - |g|_n \ge \la(n)/2 \;.
$$
By starting from any root and iterating this procedure one obtains a geodesic ray converging to a point from $\p_\sh\Fc$, so that $\p_\sh\Fc$ is non-empty (cf. \secref{sec:Markov} above).

Finally, if $\xi\in\p_\sh\Fc$, i.e., if condition \eqref{eq:minus} is satisfied, then for any $g\in G$
one also has
$$
\la\left(\bn{ \ga_n }\right) - \left| g\ga_{n-1} \right|_{\bn{ \ga_n }} \ge
\la\left(\bn{ \ga_n }\right) - \left| \ga_{n-1} \right|_{\bn{ \ga_n }}
- \left| g \right|_{\bn{ \ga_n }}
\xrightarrow[n\to\infty]{} \infty \;,
$$
so that for all sufficiently large $n$
$$
g\ga_n = g \left( \sl{\ms\ga_n} \cdot \wh{\,\ms\ga_n\,} \cdot \sr{\ms\ga_n}\right)
= g \left( \ga_{n-1} \cdot \wh{\,\ms\ga_n\,} \cdot \sr{\ms\ga_n}\right)
= g \ga_{n-1} \cdot \wh{\,\ms\ga_n\,} \cdot \sr{\ms\ga_n} \;,
$$
is the spike decomposition of $g\ga_n$, and $\bn{ g\ga_n }=\bn{ \ga_n }$, whence $g\xi\in\p_\sh\Fc$. Thus, the set~$\p_\sh\Fc$ is $G$-invariant, and therefore it is contained in $\p_G\Fc$, so that
$\p_G\Fc\supset \p_\sh\Fc$ is also non-empty.
\end{proof}

\section{Records in sequences of i.i.d.\ variables} \label{sec:records}

\subsection{Simple records}

Let
$$
p = (p_0, p_1, \dots)
$$
be a \textsf{probability measure} on $\ZZ_+$, and let
$$
F_j = p_0 + p_1 +\dots + p_j
$$
be the corresponding \textsf{distribution function}. Given a sequence $X_1, X_2, \dots$ of \textsf{independent $p$-distributed random variables}, we denote by
\begin{equation} \label{eq:rec}
M_n = \max \{ X_1, X_2,\dots, X_n \}
\end{equation}
the \textsf{time $n$ record value} of the sequence, and by
$$
K_n = \card \bigl\{ i\in\{1,2,\dots,n\}: X_i=M_n \bigr\}
$$
its \textsf{multiplicity}\footnotemark, i.e., the number of times the record value $M_n$ has been attained on the interval between 1 and $n$. We say that the record value $M_n$ is \textsf{simple} (at time $n$) if $K_n=1$.

\footnotetext{\;One also refers to $K_n$ as the \textsf{number of variables tied for the record at time $n$} (by calling simple records \textsf{untied}). The somewhat surprising asymptotic behaviour of $K_n$ is an interesting probabilistic problem, see Eisenberg \cite{Eisenberg09} for a comprehensive overview.}

We are interested in the situation when
\begin{equation} \label{eq:kn}
K_n=1 \; \text{for all sufficiently large}\; n \;,
\end{equation}
i.e., when only a \emph{finite} number of records along the whole sequence $(X_n)$ are not simple. In this case we say that the records of the sequence $(X_n)$ are \textsf{eventually simple}.

\begin{lem}[\textsf{eventual record simplicity}] \label{lem:bsw}
The records of a sequence $(X_n)$ of independent $p$-distributed $\ZZ_+$-valued random variables are almost surely eventually simple if and only if the measure $p$ is infinitely supported and
\begin{equation} \label{eq:sum}
\sum_{j=0}^\infty \rho_j^2 < \infty \;,
\end{equation}
where
\begin{equation} \label{eq:rj}
\rho_j = \frac{p_j}{1-F_{j-1}} = \frac{p_j}{p_j + p_{j+1} + \dots} \;.
\end{equation}
\end{lem}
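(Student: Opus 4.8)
The plan is to reduce the question of eventual record simplicity to a Borel–Cantelli argument. For each index $j$, let me consider the event that the record value $j$ is attained and, once attained, is matched a second time before a strictly larger value appears. The records fail to be eventually simple precisely when infinitely many of these ``tied record'' events occur; by the Borel–Cantelli lemmas, this happens almost surely never or almost surely infinitely often depending on the convergence of the sum of the probabilities of these events, provided I can arrange a suitable independence.

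First I would compute the probability $q_j$ that the value $j$ plays the role of a tied record. The natural way to model this is to look at the sequence only through the values $\ge j$. Among the i.i.d.\ variables $(X_n)$, condition on the first index at which a value $\ge j$ occurs; given that this value equals $j$ (which has conditional probability $\rho_j=p_j/(1-F_{j-1})$ among the values $\ge j$), the record $j$ is non-simple iff, before the first strictly larger value appears, the value $j$ recurs. A short computation with the geometric ``race'' between the three outcomes ``equals $j$'', ``exceeds $j$'', ``is $<j$ (ignored)'' shows that the probability of a repeat of $j$ before an overshoot is comparable to $\rho_j$. Combining with the factor $\rho_j$ for the record value being exactly $j$ (rather than larger), the probability that $j$ ever serves as a non-simple record is of order $\rho_j^2$. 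Thus $\sum_j q_j$ converges if and only if $\sum_j \rho_j^2<\infty$, which is the stated condition \eqref{eq:sum}.

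Next I would handle the two directions. For the ``if'' direction, convergence of $\sum_j \rho_j^2$ together with the first Borel–Cantelli lemma gives that almost surely only finitely many values $j$ are tied records, which is exactly eventual simplicity \eqref{eq:kn}; here infinite support of $p$ guarantees that the record values $M_n\to\infty$, so the records genuinely cycle through infinitely many distinct $j$ and the ``eventually'' is meaningful. For the ``only if'' direction I would show that if $\sum_j\rho_j^2=\infty$ (or if $p$ has finite support), then non-simple records occur infinitely often almost surely. Finite support is immediate: the maximal atom is attained infinitely often, so the top record is eventually tied. For infinite support with a divergent sum, the events ``$j$ is a tied record'' for distinct $j$ are \emph{independent}, since they are determined by disjoint portions of the value-space $\{j\}$ versus $\{j'\}$ partitioned through the ordering of first appearances; this independence lets me invoke the second Borel–Cantelli lemma to conclude that infinitely many tied records occur.

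The main obstacle is establishing the independence (or at least a workable conditional structure) of the events $\{j \text{ is a tied record}\}$ across different $j$, which is what upgrades the easy first Borel–Cantelli direction into the sharp two-sided criterion. The subtlety is that these events are not defined on disjoint blocks of the index set $\{1,2,\dots\}$ but rather on disjoint blocks of the \emph{value} set $\ZZ_+$, so I would reformulate the whole problem in terms of the order statistics or, equivalently, use the standard device of looking at the induced sequence of ``new record candidates'' and the sequence of their multiplicities. Concretely, thinning the sequence to retain only the first appearance of each new maximal value reduces everything to an independent family indexed by the successive record values, after which the probability $q_j\asymp\rho_j^2$ and the independence both fall out; the convergence/divergence dichotomy for $\sum_j\rho_j^2$ then yields the result via the Borel–Cantelli lemmas. (This is a classical result; I expect the reference ``bsw'' in the lemma's label points to the original source, and the proof would likely cite it rather than reprove the sharp independence from scratch.)
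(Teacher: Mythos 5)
Your proposal is correct and follows essentially the same route as the paper: the events $B_j=\{j \text{ is a non-simple record value}\}$ have probability exactly $\rho_j^2$ (not just up to constants), are jointly independent, and the two Borel--Cantelli lemmas together with the finite-support degeneracy give the dichotomy --- the paper attributes exactly this scheme to Brands--Steutel--Wilms, Qi and Eisenberg, and then streamlines it via Vervaat's observation that the record values $(R_k)$ form a non-decreasing Markov chain whose occupation times $Z_j$ are independent geometric variables of parameter $1-\rho_j$, so that $B_j=\{Z_j\ge 2\}$. The one soft spot is your justification of the joint independence: the events $B_j$ are \emph{not} determined by disjoint collections of the $X_n$, and ``disjoint portions of the value-space'' is not by itself an argument, but the thinning-to-record-multiplicities device you invoke at the end is precisely the occupation-time computation that makes it rigorous (the joint law of $(Z_0,\dots,Z_j)$ factors as $\prod_i(1-\rho_i)\rho_i^{z_i}$ because the record chain never moves downward).
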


\begin{cor} \label{cor:br}
If the sequence of the weights $p_j$ is slowly varying in the sense that there is $\ep>0$ such that
$$
p_i \ge \ep p_j \qquad \textup{for all}\;\;i,j\in\NN\;\; \textup{with}\;\; j \le i \le 2j
$$
(in particular, if $p_j$ has polynomial decay), then the records are almost surely eventually simple.
\end{cor}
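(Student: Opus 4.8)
The plan is to derive Corollary~\ref{cor:br} directly from \lemref{lem:bsw}. By that lemma, it suffices to verify two things under the slow-variation hypothesis: that the measure $p$ is infinitely supported, and that the series $\sum_j \rho_j^2$ converges, where $\rho_j = p_j/(p_j+p_{j+1}+\dots)$. Infinite support is immediate, since if $p$ had finite support the records would stabilize at the maximal atom and be attained infinitely often (hence never eventually simple, and in any case $\rho_j$ would not even be defined past the top of the support). So the entire content is the estimate $\sum_j \rho_j^2 < \infty$.

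First I would control the tail $1-F_{j-1} = p_j + p_{j+1} + \dots$ from below using the slow-variation condition. The natural device is a dyadic blocking argument: for each $j$, the hypothesis gives $p_i \ge \ep\, p_j$ for every $i$ in the range $j \le i \le 2j$, so the tail sum satisfies
$$
1 - F_{j-1} \;=\; \sum_{i\ge j} p_i \;\ge\; \sum_{i=j}^{2j} p_i \;\ge\; (j+1)\,\ep\, p_j \;\ge\; \ep\, j\, p_j \;.
$$
Consequently
$$
\rho_j \;=\; \frac{p_j}{1-F_{j-1}} \;\le\; \frac{p_j}{\ep\, j\, p_j} \;=\; \frac{1}{\ep\, j} \;,
$$
valid for all $j\ge 1$. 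Squaring and summing then gives $\sum_{j\ge 1}\rho_j^2 \le \ep^{-2}\sum_{j\ge 1} j^{-2} < \infty$, which is exactly condition \eqref{eq:sum}.

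The parenthetical claim that polynomial decay is a special case should also be addressed: if $p_j \asymp j^{-\alpha}$ (more precisely, if $p_j$ is eventually comparable to a regularly varying sequence of index $-\alpha$), then for $j\le i\le 2j$ the ratio $p_i/p_j$ is bounded below by a constant depending only on $\alpha$, so the slow-variation hypothesis holds with a suitable $\ep$; I would note this in one sentence rather than belabor it.

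The main obstacle is really just making the lower bound on the tail honest, i.e.\ choosing the summation range in the dyadic block so that the number of comparable terms grows linearly in $j$ while staying inside the window $[j,2j]$ where the hypothesis applies. This is what produces the factor $j$ that tames $\rho_j$ down to order $1/j$; once that factor is in hand the square-summability is automatic. No subtlety beyond this step is expected, so the corollary follows immediately from \lemref{lem:bsw}.
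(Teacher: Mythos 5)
Your proof is correct and is exactly the argument the paper intends (the corollary is stated without proof as an immediate consequence of Lemma~\ref{lem:bsw}): the dyadic block $\sum_{i=j}^{2j}p_i\ge(j+1)\ep p_j$ gives $\rho_j\le 1/(\ep j)$, hence condition \eqref{eq:sum}. The only cosmetic point is that infinite support follows more directly from the hypothesis itself ($p_{2j}\ge\ep p_j$ propagates positivity to all larger indices) than from your remark about records stabilizing.
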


The sufficiency and the necessity of condition \eqref{eq:sum} are due to Brands -- Steutel -- Wilms \cite[Theorem~2.3]{Brands-Steutel-Wilms94} and to Qi \cite[Theorem 2]{Qi97}, respectively. Later a shorter and more conceptual proof of \lemref{lem:bsw} in both directions was given by Eisenberg \cite[Theorem 3 and Corollaries 3.1, 3.2]{Eisenberg09}. It is a combination of the observations that

\begin{enumerate}[\rm(i)]
\item
the events
\begin{equation} \label{eq:bj}
B_j = \bigl\{ j \;\, \text{is a non-simple record value of the sequence} \; (X_n) \bigr\}
\end{equation}
are jointly independent;
\item
the probability of $B_j$ is $\rho_j^2$;
\item
the records of the sequence $(X_n)$ are eventually simple if and only if only finitely many of $B_j$ occur;
\end{enumerate}

\noindent
with the classical Borel -- Cantelli Lemma applied to the sequence $(B_j)$. It is much easier, however, to make these observations by using the \emph{record stopping times} determined by the sequence $(X_n)$.

\subsection{Record times}

Let
\medskip
\begin{equation} \label{eq:tk}
\begin{aligned}
T_1 &= 1 \;, \\
T_{k+1} &= \min \{ n> T_k : X_n \ge X_{T_k} \} = \min \{ n>T_k : X_n=M_n \} \;,\qquad k\ge 1
\end{aligned}
\end{equation}
be the sequence of the \textsf{record times} of $(X_n)$, and let
$$
R_k=X_{T_k}
$$
be the associated sequence of the \textsf{record values}\footnotemark. The sequence $(R_k)$ is non-decreasing, but it may well contain repeated entries (corresponding to non-simple records), and
$$
K_n = \card \bigl\{ k\le n: R_k=M_n \bigr\} \;,
$$
so that condition \eqref{eq:kn} in these terms means that the sequence $(R_k)$ is eventually \emph{strictly} increasing, i.e., in the usual terminology, that eventually any weak record is an ordinary one.

\footnotetext{\;In the usual definition one requires that each new record \emph{strictly} exceed the previous one, see Arnold~-- Balakrishnan -- Nagaraja \cite{Arnold-Balakrishnan-Nagaraja98} and Nevzorov \cite{Nevzorov01} for a general overview. Our definition is due to Vervaat \cite{Vervaat73} who used the terms \emph{weak} record epochs and \emph{weak} record values, respectively. In the present paper we omit the qualifier ``weak'' as we are only dealing with the weak record times.}

Vervaat noticed that the sequence $(R_k)$ is a Markov chain on $\ZZ_+$ with the initial state $R_0=0$ and the transition probabilities
$$
p_{ij} =
\begin{cases}
\displaystyle \frac{p_j}{p_i+p_{i+1}+\dots} &\;, \; i\le j \\ \\
\hfil 0 \hfil &\;, \; \text{otherwise} \;.
\end{cases}
$$
Or, in terms of the coefficients $\rho_j$ \eqref{eq:rj}
\begin{equation} \label{eq:pij}
p_{ij} =
\begin{cases}
\hfil \rho_i \hfil &\;, \; i=j \;, \\ \\
(1-\rho_i) \dots (1-\rho_{j-1}) \rho_j &\;, \; i<j \;, \\ \\
\hfil 0 \hfil &\;, \; i>j \;.
\end{cases}
\end{equation}
This observation leads to a simple proof of the following property \cite[Lemma~4.3]{Vervaat73} (also see Stepanov \cite[Lemma 1]{Stepanov92}), which was apparently not known to the authors of \cite{Brands-Steutel-Wilms94, Qi97, Eisenberg09} (we present Vervaat's argument in a somewhat modified form).

\begin{lem}
Let $(X_n)$ be a sequence of independent $p$-distributed $\ZZ_+$-valued random variables, and let
$$
\begin{aligned}
Z_j &= \card \bigl\{ n\ge 1: M_n=X_n=j \bigr\} \\[.3cm]
&= \max \bigl\{ K_n: M_n=j \bigr\} \\[.3cm]
&= \card \bigl\{ k\ge 1: R_k=j \bigr\}\\
&
\end{aligned}
$$
be the number of times a given value $j\ge 0$ happens to be the record value along the whole sequence $(X_n)$, i.e., $Z_j$ are the \textsf{occupation times} of the Markov chain $(R_k)$. Then each $Z_j$ is distributed geometrically with the parameter $(1-\rho_j)$, and, moreover, the family of random variables $(Z_j)$ is independent.
\end{lem}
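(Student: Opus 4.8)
The plan is to exploit Vervaat's observation that the record-value sequence $(R_k)$ is a Markov chain on $\ZZ_+$ with the explicit transition probabilities \eqref{eq:pij}, and to read off the distribution of the occupation times $Z_j$ directly from these transition rules. The key structural fact is that, by \eqref{eq:pij}, once the chain $(R_k)$ enters the state $j$ it has two options at each subsequent step: it stays at $j$ with probability $p_{jj}=\rho_j$, or it jumps to some state strictly larger than $j$ (and can never return, since $p_{ij}=0$ for $i>j$). Thus $Z_j$, the number of visits to $j$, is governed entirely by a sequence of independent ``stay-or-leave'' trials, each a coin flip with success (stay) probability $\rho_j$.

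First I would argue that the chain almost surely visits every state eventually (so each $Z_j\ge 1$ a.s.) — indeed the support of $p$ is what matters here, but since the statement asserts $Z_j$ is geometric with parameter $(1-\rho_j)$, and geometric on $\{1,2,\dots\}$ has this form, I need $j$ to be hit with probability one. This follows because the chain is non-decreasing and the transition probabilities out of any state below $j$ put positive mass on reaching $j$ or overshooting it; combined with $\rho_j>0$ whenever $p_j>0$. Next, conditionally on the chain ever reaching $j$, the strong Markov property applied at the first hitting time of $j$ shows that the number of consecutive steps spent at $j$ before the first jump to a higher state is exactly the number of failures-plus-one in a Bernoulli scheme with stay-probability $\rho_j$, i.e.
$$
\Pb\{Z_j=m \mid \text{$j$ is hit}\} = \rho_j^{\,m-1}(1-\rho_j) \;, \qquad m\ge 1 \;,
$$
which is the geometric distribution with parameter $(1-\rho_j)$. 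Since $j$ is hit almost surely, the conditioning is vacuous and this is the unconditional law of $Z_j$.

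For the independence of the family $(Z_j)$, the cleanest route is again the Markov property together with the fact that the holding mechanism at each state uses \emph{fresh} randomness. Concretely, I would introduce, for each state $j$ that the chain occupies, the i.i.d.\ Bernoulli$(\rho_j)$ variables that decide at each visit whether to stay; the occupation time $Z_j$ depends only on the block of these variables attached to level $j$, and these blocks are independent across different $j$ because the transitions at distinct states are driven by disjoint portions of the underlying randomness. The point is that the decision to leave $j$ (probability $1-\rho_j$) and the subsequent choice of \emph{where} to jump are themselves independent, and the where-to-jump randomness is exactly what couples the levels — but $Z_j$ does not see the jump destinations, only the staying counts. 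Making this precise via the skeleton/jump-chain decomposition of $(R_k)$ is what yields joint independence of all the $Z_j$ simultaneously, not merely pairwise.

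The main obstacle I anticipate is not the single-coordinate computation, which is essentially the memorylessness of the stay-or-leave trials, but rather organizing the argument so that \emph{joint} independence of the infinite family $(Z_j)_{j\ge 0}$ is transparent rather than merely plausible. The subtlety is that the chain visits the states in a random (though monotone) order, so one cannot naively assert that the holding times are independent without first decoupling the ``ordering'' information (which states are visited, encoded by the jump chain) from the ``holding'' information (how long each visited state is occupied). I expect the right formalization is to realize $(R_k)$ as a deterministic function of two independent ingredients — a sequence of independent geometric holding times $\{Z_j\}$ and an independent jump chain recording the strictly increasing sequence of distinct values taken — and to check that this construction reproduces the transition probabilities \eqref{eq:pij}; once that identity in law is verified, the asserted independence and geometric marginals are immediate by construction.
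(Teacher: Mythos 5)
There is a genuine error at the heart of your argument: the claim that the chain $(R_k)$ almost surely visits every state, so that $Z_j\ge 1$ a.s. This is false. The chain is non-decreasing and can (and with positive probability does) jump \emph{over} a state: for instance, if $X_1=5$ then $R_1=5$ and the states $0,1,2,3,4$ are never visited, an event of probability $p_5>0$. Your own phrase ``put positive mass on reaching $j$ \emph{or overshooting it}'' already concedes the point --- overshooting means skipping. In fact $\Pb(Z_j\ge 1)=\rho_j$, not $1$, and the correct marginal law is the geometric supported on $\{0,1,2,\dots\}$, namely $\Pb(Z_j=m)=(1-\rho_j)\,\rho_j^{m}$, with the atom $\Pb(Z_j=0)=1-\rho_j$ accounting for the skip. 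Your proposed law $\rho_j^{\,m-1}(1-\rho_j)$ on $\{1,2,\dots\}$ is wrong, and the error propagates: the paper needs $\Pb(Z_j\ge 2)=\rho_j^2$ to feed the Borel--Cantelli criterion of \lemref{lem:bsw}, whereas your law would give $\Pb(Z_j\ge 2)=\rho_j$.

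The same confusion undermines your independence argument. You propose to decouple an ``independent jump chain recording the strictly increasing sequence of distinct values taken'' from independent holding times; but the set of visited states is $\{j: Z_j\ge 1\}$, i.e.\ it is a \emph{function} of the occupation times, not independent of them, so that decomposition cannot be set up as stated. The paper's proof avoids all of this in one stroke: for non-negative $z_0,\dots,z_{j-1}$ and $z_j>0$, the event $\{Z_0=z_0,\dots,Z_j=z_j\}$ corresponds to a unique trajectory of the monotone chain (stay at each $i\le j$ exactly $z_i$ times, then leave $j$), and the product of the transition probabilities \eqref{eq:pij} along that trajectory telescopes to $\prod_{i=0}^{j}(1-\rho_i)\rho_i^{z_i}$ --- the factors $(1-\rho_i)$ coming from the formula $p_{ij}=(1-\rho_i)\cdots(1-\rho_{j-1})\rho_j$ are exactly the contributions $\Pb(Z_i=0)$ of the skipped states. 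Both the geometric marginals (on $\ZZ_+$, parameter $1-\rho_j$) and the joint independence of the whole family are read off from this single product identity. Your one-coordinate computation via the strong Markov property at the hitting time of $j$ is fine \emph{conditionally on hitting $j$}, but it must be combined with the skipping probabilities rather than with the false assertion that the conditioning is vacuous.
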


\begin{proof}
Since the transition probabilities $p_{ij}$ are non-zero only when $i\le j$, for any non-negative $z_0,z_1,\dots,z_{j-1}$ and any $z_j>0$
$$
\Pb \bigl( Z_0=z_0,Z_1=z_1,\dots,Z_j=z_j \bigr)
$$
is the probability that the chain $(R_k)$ consecutively visits each of the points $i=0,1,\dots,j$ the prescribed number of times $z_i$, after which it leaves the point $j$. By formula \eqref{eq:pij} the corresponding product of transition probabilities \eqref{eq:pij} is precisely
$$
\prod_{i=0}^j (1-\rho_i) \rho_i^{z_i} \;,
$$
which implies the claim.
\end{proof}

In order to obtain \lemref{lem:bsw} it now remains to notice that the sets $B_j$ \eqref{eq:bj} are precisely the events $(Z_j\ge 2)$, so that, indeed, $\Pb(B_j)=\rho_j^2$, and to apply the Borel -- Cantelli Lemma.

\medskip

Below we shall also need a modification of \lemref{lem:bsw} in the presence of additional $\{0,1\}$-valued random variables $\ep_n$ (which can be considered as the indicators of two subpopulations of the general population). More precisely, let
\begin{enumerate}[\ib]
\item
$\wt p=\left(\wt p_{j,\ep}\right)$ be a probability measure on $\ZZ_+\times\{0,1\}$,
\item
$p=(p_j)$ be its projection onto $\ZZ_+$,
\item
$(X_n,\ep_n)$ be a sequence of independent $\wt p$-distributed random variables,
\end{enumerate}
so that $X_n$ are independent $p$-distributed. We are interested in the situation when the record values of the sequence $X_n$ are eventually realized only on the subpopulation determined by the condition $\ep_n=0$.

\begin{lem} \label{lem:pos}
If a probability measure $\wt p$ on $\ZZ_+\times\{0,1\}$ is such that
\begin{enumerate}[\rm(i)]
\item
its projection $p$ onto $\ZZ_+$ satisfies the eventual record simplicity condition of \lemref{lem:bsw}
$$
\sum_j \left( \frac{p_j}{p_j + p_{j+1} + \dots} \right)^2<\infty \;,
$$
\item
the coefficients
$$
\al_j = \frac{\wt p_{j,1}}{p_j} = \frac{\wt p_{j,1}}{p_{j,0}+p_{j,1}}
$$
satisfy condition
$$
\sum_j \al_j<\infty \;.
$$
\end{enumerate}
Then almost surely $\ep_{T_k}=0$ for all sufficiently large $k$, where $T_k$ are the record times \eqref{eq:tk} of the sequence $(X_n)$.
\end{lem}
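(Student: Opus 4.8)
The plan is to decouple the record structure of $(X_n)$ from the auxiliary labels $\ep_n$ by conditioning on the entire sequence $(X_n)$. Since the pairs $(X_n,\ep_n)$ are i.i.d.\ $\wt p$-distributed, conditionally on $\sigma(X_1,X_2,\dots)$ the labels $\ep_1,\ep_2,\dots$ are independent, with each $\ep_n$ Bernoulli-distributed of parameter $\al_{X_n}=\wt p_{X_n,1}/p_{X_n}$. On the other hand, the record times $T_k$ and the record values $R_k=X_{T_k}$ are measurable with respect to $\sigma(X_n)$, so that conditionally on $(X_n)$ each $T_k$ is a constant and
$$
\Pb\bigl(\ep_{T_k}=1 \mid (X_n)\bigr)=\al_{R_k} \qquad\text{for every } k\ge 1 \;.
$$

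First I would reduce the assertion to the summability $\sum_k \al_{R_k}<\infty$ a.s. Indeed, let $N=\card\{k\ge 1:\ep_{T_k}=1\}$. Then $\Eb\bigl[N \mid (X_n)\bigr]=\sum_k \al_{R_k}$, so once this conditional expectation is almost surely finite the non-negative variable $N$ is itself almost surely finite (a conditional Borel--Cantelli argument), which is exactly the claim that $\ep_{T_k}=0$ for all sufficiently large $k$.

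It remains to bound $\sum_k \al_{R_k}$. Grouping the record values according to the common value $j$ they assume and writing $Z_j=\card\{k\ge 1:R_k=j\}$ for the occupation times of the chain $(R_k)$, one has $\sum_k \al_{R_k}=\sum_j \al_j Z_j$. Taking expectations and invoking the preceding lemma, by which $Z_j$ is geometric of parameter $(1-\rho_j)$ and hence $\Eb Z_j=\rho_j/(1-\rho_j)$, gives
$$
\Eb\Bigl[\sum_j \al_j Z_j\Bigr]=\sum_j \al_j\,\frac{\rho_j}{1-\rho_j} \;.
$$
Assumption (i) forces $\rho_j\to 0$ (necessary for $\sum_j\rho_j^2<\infty$), and the infinitude of $\supp p$ makes every $\rho_j$ strictly less than $1$; hence $\rho_*:=\sup_j\rho_j<1$ and $\Eb Z_j\le \rho_*/(1-\rho_*)$ uniformly in $j$. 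Combined with assumption (ii) this yields
$$
\Eb\Bigl[\sum_j \al_j Z_j\Bigr]\le \frac{\rho_*}{1-\rho_*}\sum_j \al_j<\infty \;,
$$
so that $\sum_k\al_{R_k}=\sum_j\al_j Z_j$ is almost surely finite, completing the reduction.

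The only delicate point---and the place where I would be most careful---is the conditioning step: one must make sure that replacing the fixed index $n$ by the random record time $T_k$ does not disturb the conditional Bernoulli structure of the labels. This is guaranteed precisely by the independence of the pairs $(X_n,\ep_n)$, because conditionally on $(X_n)$ the record time $T_k$ is deterministic and $\ep_{T_k}$ is simply the conditionally independent $\mathrm{Bernoulli}(\al_{R_k})$ label attached to that position. Everything past this point is a routine first-moment estimate; note in particular that only $\rho_j\to 0$, rather than the full eventual record simplicity of \lemref{lem:bsw}, is actually used.
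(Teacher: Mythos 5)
Your proof is correct, and while its skeleton matches the paper's (condition so that the labels $\ep_{T_k}$ become independent Bernoulli variables with parameters $\al_{R_k}$, reduce the claim to the almost sure convergence of $\sum_k \al_{R_k}$, and conclude by Borel--Cantelli), you establish the key summability step by a genuinely different route. The paper invokes \lemref{lem:bsw} directly: almost surely the sequence $(R_k)$ contains only finitely many repeated values, so the tail of $\sum_k\al_{R_k}$ is dominated by $\sum_j\al_j$. You instead write $\sum_k\al_{R_k}=\sum_j\al_j Z_j$ and bound its \emph{expectation} using the geometric law of the occupation times $Z_j$ from the preceding lemma, which requires only $\sup_j\rho_j<1$ (a consequence of $\rho_j\to 0$ and the infinite support of $p$) rather than the full condition $\sum_j\rho_j^2<\infty$. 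Your version is marginally longer but isolates the weaker hypothesis that actually drives this particular lemma --- a point you note explicitly and which the paper's one-line argument obscures; the paper's version is shorter and simply recycles the eventual simplicity it has already established (and needs anyway in the application). Your conditioning step is also sound and is a mild variant of the paper's: conditioning on the whole sequence $(X_n)$ makes the record times deterministic and leaves the labels independent Bernoulli of parameters $\al_{X_n}$, which yields exactly the conditional structure the paper asserts after conditioning on $(T_k)$ and $(R_k)$.
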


\begin{proof}
Conditioned by the sequence $(T_k)$ and the sequence of the corresponding record values $R_k=X_{T_k}$, the values $\ep_{T_k}$ are independently sampled with the probabilities
$$
\Pb(\ep_{T_k}=0)=1- \al_{R_k} \;, \qquad \Pb(\ep_{T_k}=1)=\al_{R_k} \;.
$$
Since by \lemref{lem:bsw} the sequence $(R_k)$ almost surely contains only finitely many repetitions, the series $\sum_k \al_{R_k}$ is convergent, so that the claim follows from the classical Borel -- Cantelli lemma.
\end{proof}

\subsection{Estimates of record values}

Finally, we shall also need the following simple lower and upper estimates of the record values of a sequence $(X_n)$ of i.i.d.\ $\ZZ_+$-valued random variables.

\begin{lem} \label{lem:ph}
For any distribution $p$ on $\ZZ_+$ with infinite support there exist non-de\-creas\-ing functions $\ph,\psi:\NN\to\NN$ such that almost surely the record values $M_n$ \eqref{eq:rec} of the sequence $(X_n)$ satisfy the inequalities
$$
\ph(n) \le M_n \le \psi(n)
$$
for all sufficiently large $n$.
\end{lem}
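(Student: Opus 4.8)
The plan is to prove the lower bound and the upper bound separately, and in each case to reduce the statement to a Borel--Cantelli argument by exploiting that $M_n$ is non-decreasing in $n$. The key structural fact is that $(M_n)$ is monotone, so to produce a non-decreasing envelope it suffices to control $M_n$ along a sufficiently sparse subsequence and then interpolate by monotonicity.

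First I would handle the upper bound. For each $m\ge 1$ pick a threshold $\psi(m)$ so large that $\Pb(X_1 > \psi(m))$ is summable in $m$ after multiplying by the number of variables one wants to control; concretely, for a dyadic block of indices $n\in[2^{m},2^{m+1})$ I would choose $\psi(m)$ with $2^{m+1}\,\Pb\bigl(X_1>\psi(m)\bigr)\le 2^{-m}$, which is possible because $\Pb(X_1>N)\to 0$. By a union bound the probability that \emph{some} $X_n$ with $n<2^{m+1}$ exceeds $\psi(m)$ is at most $2^{-m}$, hence summable, and Borel--Cantelli gives that almost surely, for all large $m$, every $X_n$ with $n<2^{m+1}$ is $\le\psi(m)$, so in particular $M_n\le\psi(m)$ for $n<2^{m+1}$. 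Defining the upper envelope to be constant on each dyadic block (and non-decreasing across blocks) turns this into the desired bound $M_n\le\psi(n)$ for all sufficiently large $n$.

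Next I would treat the lower bound, which is where the infinite support hypothesis is essential. Because $p$ has infinite support, for every threshold $t$ one has $\Pb(X_1\ge t)>0$. I would again work along a sparse deterministic subsequence $n_m$, say $n_m=m$ or a geometric sequence, and choose $\ph(m)$ small enough that the event $\{M_{n_m}<\ph(m)\}=\{X_1<\ph(m),\dots,X_{n_m}<\ph(m)\}$ has probability $\le (1-q_m)^{n_m}$ for a fixed $q_m=\Pb(X_1\ge\ph(m))>0$; by taking $\ph$ to grow slowly enough (so that $q_m$ stays bounded below, or decays slowly relative to $n_m$), I can force $n_m\,q_m\to\infty$ fast enough that $(1-q_m)^{n_m}$ is summable. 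Borel--Cantelli then gives $M_{n_m}\ge\ph(m)$ eventually, and monotonicity of $M_n$ extends this to $M_n\ge\ph(n)$ for all large $n$ once $\ph$ is redefined to be non-decreasing and suitably shifted.

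The main obstacle I anticipate is bookkeeping the interdependence of the two free choices in the lower bound: the faster $\ph$ grows the smaller $q_m$ becomes, so one must verify that $q_m=\Pb(X_1\ge\ph(m))$ can be kept large enough for $n_m q_m\to\infty$ while still having $\ph$ be unbounded. Infinite support guarantees $q_m>0$ for every fixed $\ph(m)$, so the cleanest route is to fix the subsequence and \emph{then} define $\ph$ recursively: having chosen $\ph(m)$, pick the next value $\ph(m+1)\ge\ph(m)$ to be any integer still satisfying $\Pb(X_1\ge\ph(m+1))\ge 1/m^2$, say, which is possible since the distribution function increases to $1$ but never reaches it on a finite argument. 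With that recursive choice the summability is immediate and the rest is routine monotone interpolation; no delicate estimate beyond a single application of Borel--Cantelli in each direction is needed.
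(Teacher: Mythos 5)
Your argument follows the same route as the paper's: a single Borel--Cantelli application in each direction, with the lower bound coming from $\Pb(M_n\le M)=F_M^n$ and the upper bound from tail estimates on the individual $X_n$. (The paper avoids your dyadic blocks by noting that eventually $M_n\le\psi(n)$ is equivalent to eventually $X_n\le\psi(n)$, so that $\sum_n\bigl(1-F_{\psi(n)}\bigr)<\infty$ suffices directly; your block version is correct but slightly more cumbersome.)

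There is, however, a quantitative slip in your concluding recipe for the lower bound. The guarantee $q_m=\Pb\bigl(X_1\ge\ph(m)\bigr)\ge 1/m^2$ only yields $(1-q_m)^{n_m}\le\exp\bigl(-n_m/m^2\bigr)$, which is summable for a geometric subsequence $n_m=2^m$ but vacuous for $n_m=m$, where it gives $\exp(-1/m)\to 1$; and nothing in the construction prevents $q_m$ from genuinely being of order $1/m^2$ (indeed this is what happens if you take $\ph(m)$ as large as the constraint allows, which is the natural way to make $\ph$ unbounded), in which case $(1-q_m)^m\to 1$ and the Borel--Cantelli hypothesis fails. So the claim that ``summability is immediate'' is unjustified as stated. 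Your own general criterion --- that $n_mq_m$ must grow fast enough to make $(1-q_m)^{n_m}$ summable --- is the correct one, and the fix is routine: either commit to the geometric subsequence, or keep $n_m=m$ and strengthen the requirement to $q_m\ge 2\log m/m$, which is exactly what the paper's prescription $F_{\ph(n)}^n\approx 1/n^2$ amounts to. Infinite support still permits such a choice with $\ph$ unbounded (which the application in \lemref{lem:psi} needs in order to invert $\ph$), since $1-F_j>0$ for every $j$ while the required lower bound on the tail tends to $0$.
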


\begin{proof}
Since
$$
\Pb \bigl( M_n \le M \bigr) = \Pb \bigl( X_1, X_2,\dots, X_n \le M \bigr)  = \bigl[\Pb \bigl( X_1 \le M \bigr)\bigr]^n = F^n_M \;,
$$
for the lower bound it is sufficient to require (by the Borel -- Cantelli lemma) that
$$
\sum_n F^n_{\ph(n)} <\infty \;.
$$
As for the upper bound, eventually $M_n\le \psi(n)$ if and only if eventually $X_n \le \psi(n)$, which (again by Borel -- Cantelli) holds almost surely if
\begin{equation} \label{eq:psi}
\sum_n \left( 1- F_{\psi(n)} \right) < \infty \;.
\end{equation}
Thus, for instance, one can define $\ph$ and $\psi$ in such a way that $F_{\ph(n)}^n$ and $1- F_{\psi(n)}$ both behave like $1/n^2$, i.e.,
$$
F_{\ph(n)} \approx \left( \frac1{n^2} \right)^{1/n} = e^{-2\log n/n} \approx 1- 2\,\frac{\log n}n \;,
$$
and
$$
F_{\psi(n)} \approx 1 - \frac1{n^2} \;.
$$
\end{proof}

\begin{rem}
As it was proved by Barndorff--Nielsen \cite[Theorem 1]{BarndorffNielsen61}, under the additional assumption that $F^n_{\ph(n)}$ is non-increasing the condition
$$
\sum_{n=3}^\infty F_{\ph(n)}^n \, \frac{\log\log n}{n} < \infty
$$
is actually necessary and sufficient for the lower bound from \lemref{lem:ph}. As for the upper bound, since $X_n$ are independent, it is in fact equivalent to \eqref{eq:psi}, which goes back to Geffroy \cite{Geffroy58}. See Tomkins -- Wang \cite{Tomkins-Wang98} for a survey of the related results.
\end{rem}

Reformulated in terms of the record times $T_k$ \eqref{eq:tk} and the record values $R_k=X_{T_k}$, \lemref{lem:ph} then leads to

\begin{lem} \label{lem:psi}
For any probability distribution $p$ on $\ZZ_+$ with infinite support there exist non-decreasing functions $\Phi,\Psi:\NN\to\NN$ such that almost surely
$$
T_{k+1} \le \Phi(R_k) \;, \qquad R_{k+1} \le \Psi(R_k)
$$
for all sufficiently large $k$.
\end{lem}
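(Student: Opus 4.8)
The plan is to reformulate the two inequalities of \lemref{lem:ph} in terms of record times and record values, and to exhibit functions $\Phi,\Psi$ that are forced by those bounds. The starting point is the almost sure two-sided estimate $\ph(n)\le M_n\le\psi(n)$ for all sufficiently large $n$, together with the elementary observations that, at a record time, $R_k=X_{T_k}=M_{T_k}$, and that $M_n$ is non-decreasing in $n$ while the record values $R_k$ are non-decreasing in $k$ and tend to infinity (the latter because $p$ has infinite support, so arbitrarily large values occur infinitely often).

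First I would produce the bound $R_{k+1}\le\Psi(R_k)$. The idea is that a large record value $R_k$ cannot be immediately followed by a disproportionately larger one, because an upper bound on $M_n$ caps how fast record values can grow relative to the time elapsed, and a lower bound on $M_n$ controls the time elapsed in terms of the record value. Concretely, from $R_k=M_{T_k}\ge\ph(T_k)$ one gets a lower bound $T_k\ge\ph^{-1}(R_k)$ (using a suitable generalized inverse of the non-decreasing $\ph$), and from $R_{k+1}=M_{T_{k+1}}\le\psi(T_{k+1})$ one gets an upper bound on $R_{k+1}$ in terms of $T_{k+1}$. So the remaining task is to bound $T_{k+1}$ from above in terms of $R_k$, which is exactly the first assertion, and then $\Psi$ is obtained by composing $\psi$ with that bound.

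Thus the crux is the record-time bound $T_{k+1}\le\Phi(R_k)$. Here I would argue that once the record value equals $R_k$, the \emph{next} record occurs by the first time $n$ at which $M_n$ strictly exceeds $R_k$; and for all large $n$ we have $M_n\ge\ph(n)$, so as soon as $\ph(n)>R_k$ the current maximum already exceeds $R_k$ and a new record must have occurred no later than $n$. Hence $T_{k+1}$ is bounded by the least $n$ with $\ph(n)>R_k$, which is a non-decreasing function of $R_k$; calling this function $\Phi$ (a generalized inverse of $\ph$) gives $T_{k+1}\le\Phi(R_k)$ eventually. One has to be slightly careful that these inequalities from \lemref{lem:ph} hold only for $n$ large, but since $R_k\to\infty$ almost surely the relevant times $T_k$ are eventually large, so the ``for all sufficiently large $k$'' qualifier absorbs this.

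The main obstacle I anticipate is purely bookkeeping rather than conceptual: making the passage between the ``for large $n$'' statements about $M_n$ and the ``for large $k$'' statements about $R_k,T_k$ fully rigorous, and choosing monotone generalized inverses $\ph^{-1},\psi^{-1}$ of the (possibly non-strictly monotone, integer-valued) functions $\ph,\psi$ so that the composed functions $\Phi,\Psi$ are genuinely non-decreasing $\NN\to\NN$. Once the inverses are defined as $\ph^{-1}(m)=\min\{n:\ph(n)>m\}$ and similarly for $\psi$, the two estimates follow by direct substitution, and setting $\Phi=\ph^{-1}$ and $\Psi=\psi\circ\ph^{-1}$ completes the argument.
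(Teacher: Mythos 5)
Your argument is correct and essentially identical to the paper's: the paper derives $T_{k+1}\le\Phi(R_k)$ from the observation that $R_k=M_{T_{k+1}-1}\ge\ph(T_{k+1}-1)$ and then sets $\Psi=\psi\circ\Phi$, which is exactly the inequality (read in the contrapositive direction) and the composition you use. The only blemish is the throwaway sentence claiming that $R_k\ge\ph(T_k)$ yields a \emph{lower} bound on $T_k$ --- it yields an upper bound --- but that sentence plays no role in your actual chain of estimates.
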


\begin{proof}
Since $M_n=R_k$ on the whole interval $T_k\le n\le T_{k+1}-1$, by the lower bound from \lemref{lem:ph}
$$
R_k \ge \ph(T_{k+1}-1) \;,
$$
whence, by passing to the inverse function of $\ph$, one obtains the desired upper bound for the record time $T_{k+1}$. As for the record value $R_{k+1}$, by the upper bound from \lemref{lem:ph} and the just established inequality for $T_{k+1}$,
$$
R_{k+1} \le \psi (T_{k+1}) \le \psi (\Phi (R_k)) \;.
$$
\end{proof}

\section{Identification of the Poisson boundary} \label{sec:poisson}

\subsection{Random walks on groups} \label{sec:rw}

Let $G$ be a \textsf{countable group}, and $\mu$ be a \textsf{probability measure} on~$G$. We denote the \textsf{support of the measure} $\mu$ by
$$
\supp\mu = \{g\in G: \mu(g)>0 \} \;,
$$
and use the notations $\sgr\mu$ and $\gr\mu$ for the \textsf{subsemigroup} and the \textsf{subgroup of $G$ generated by} $\supp\mu$, respectively. The measure~$\mu$ is called \textsf{irreducible}\footnotemark\, (resp., \textsf{non-degenerate}) with respect to the group $G$ if
$$
\gr\mu=G \qquad (\text{resp.,}\; \sgr\mu=G) \;.
$$
Note that the irreducibility assumption is \emph{not restrictive} (unlike non-degeneracy), as any measure $\mu$ can be made irreducible by passing to the group $\gr\mu$, and from now on
$$
\textsf{all probability measures on groups are assumed to be irreducible\;.}
$$

\begin{rem}
If the measure $\mu$ is \textsf{symmetric}, i.e., $\mu(g)=\mu\left(g^{-1}\right)$ for any $g\in G$, then non-degeneracy and irreducibility of $\mu$ are obviously equivalent.
\end{rem}

\footnotetext{\;
Historically, the first term for the measures $\mu$ with $\gr\mu=G$ was \emph{aperiodic} used by Spitzer \cite[Section I.2, D2]{Spitzer64} who pointed up that if this condition is not satisfied, then ``the problem is badly posed. In other words, \dots the random walk is defined on the \emph{wrong group}''. Right away he also admitted that his ``terminology differs from the conventional one in the theory of Markov chains''. This was precisely the reason why Guivarc'h -- Keane -- Roynette \cite[D\'efinition 20]{Guivarch-Keane-Roynette77} replaced it with the term \emph{adapted (adapt\'ee)}, which is the currently prevalent usage. However, we feel that \emph{irreducible} is much more explicit and informative: any measure $\mu$ can be made irreducible by \emph{reducing} the group $G$ to the subgroup $\gr\mu$.}

The \textsf{random walk} $(G,\mu)$ is the Markov chain on $G$ whose \textsf{transition measures} are the translates $\pi_g=g\mu$ of the measure $\mu$ (which is often called the \textsf{step distribution} of the random walk). In other words, the \textsf{Markov transition}
\begin{equation} \label{eq:trans}
g \mapstoto^{h\sim\mu} gh
\end{equation}
from a point $g\in G$ consists in the (right) multiplication of $g$ by a random \textsf{increment} $h$ sampled from the distribution $\mu$. The \textsf{Markov operator} $P$ of the random walk $(G,\mu)$ acts on appropriate functions $f$ on $G$ by averaging as
\begin{equation} \label{eq:p}
Pf (g) = \sum_h \mu(h) f(gh) \;.
\end{equation}
The action of its \textsf{dual operator} on measures $\la$ on $G$ amounts to applying the Markov transitions \eqref{eq:trans}, i.e., to the (right) \emph{convolution} with $\mu$:
$$
\la P = \la*\mu \;.
$$

Given an \textsf{initial distribution} $\tha$ on $G$, we denote by $\Pb_\tha$ the associated \textsf{Markov measure} on the space $G^{\ZZ_+}$ of \textsf{sample paths} $\yb=(y_0,y_1,y_2,\dots)$, where
$$
y_n = y_0 x_1 x_2 \dots x_n \;,
$$
and $(x_n)$ is the sequence of the independent $\mu$-distributed \textsf{increments} of the random walk. Therefore, the one-dimensional distribution of the measure $\Pb_\tha$ at time $n$ is $\tha P^n=\tha*\mu^{*n}$, where $\mu^{*n}$ denotes the \textsf{$n$-fold convolution} of the measure $\mu$ with itself. For the initial distribution $\de_g$ concentrated at a single point $g\in G$ we use the notation $\Pb_g=\Pb_{\de_g}$, so that $\Pb_\tha=\sum_g \tha(g)\Pb_g$, and by $\Pb=\Pb_e$ we denote the measure on the path space issued from the identity $e$ of the group $G$. The path space $G^{\ZZ_+}$ is endowed with the (left) \textsf{coordinate-wise action} \eqref{eq:action} of the group $G$, so that $\Pb_g=g\Pb$, and the measure $\Pb_\tha$ is the convolution $\tha*\Pb$ of the initial distribution $\tha$ with the measure $\Pb$ with respect to this action.

We denote by \;$\N$ the \textsf{counting measure} on $G$. The associated $\si$-finite measure $\Pb_\N$ on the path space dominates $\Pb_\tha$ for any initial distribution $\tha$ and is equivalent to it if $\supp\tha=G$.

\subsection{The Poisson boundary}

The asymptotic equivalence relation $\sim$ \eqref{eq:eq} on the space $G^{\ZZ_+}$ of sample paths $\yb=(y_0,y_1,y_2,\dots)$ can also be described as the \textsf{orbit equivalence relation}
$$
\yb\sim\yb' \iff \exists\, n,n'\in\ZZ_+: T^n\yb=T^{n'}\yb' \;,
$$
of the \textsf{time shift}
\begin{equation} \label{eq:shift}
T: (y_0,y_1,y_2,\dots) \mapsto (y_1,y_2,y_3,\dots) \;.
\end{equation}

We denote by $\Eg$ the complete $\si$-algebra of measurable ($\Pb_\N$ -- mod 0) $T$-invariant subsets of $G^{\ZZ_+}$, i.e., the $\si$-algebra of all measurable subsets of the path space $(G^{\ZZ_+},\Pb_\N)$ which are unions ($\Pb_\N$ -- mod 0) of the equivalence classes of the relation $\sim$, and call it the \textsf{exit $\si$-algebra}\footnotemark\, of the random walk $(G,\mu)$. The quotient of the path space determined by the $\si$-algebra $\Eg$, i.e., the space of ergodic components of the shift $T$ with respect to the invariant measure $\Pb_\N$, is called the \textsf{Poisson boundary} $\Ps$ of the random walk $(G,\mu)$, and we shall denote by
$$
\bnd: G^{\ZZ_+} \to \Ps
$$
the associated quotient map. By
$$
\nu_\tha=\bnd\Pb_\tha
$$
we denote the \textsf{harmonic measure} determined by an initial distribution $\tha$ (which is well-defined because the associated measure $\Pb_\tha$ on the path is absolutely continuous with respect to $\Pb_\N$), and by $\nnu$ we denote the quotient \textsf{harmonic measure class} on $\Ps$, i.e., the common measure class of the harmonic measures $\nu_\tha$ corresponding to the initial distributions $\tha$ with $\supp\tha=G$.

\footnotetext{
\;The isomorphism of the Banach spaces of bounded harmonic functions on the state space and of bounded $\Eg$-measurable functions on the path space of an arbitrary countable Markov chain was established already by Blackwell \cite{Blackwell55} as a simple consequence of the martingale convergence theorem, see formula~\eqref{eq:limit} below. He called the $\si$-algebra $\Eg$ \emph{invariant}, and it was also called \emph{stationary} by Neveu~\cite{Neveu64}, but these words are overloaded, so that we prefer to borrow instead the term \emph{exit} from Doob \cite{Doob59} and Hunt \cite{Hunt60} who used it when talking about the \emph{exit boundaries} and \emph{exits} of a Markov chain.}

Since the action \eqref{eq:action} of the group $G$ on the path space $G^{\ZZ_+}$ commutes with the time shift \eqref{eq:shift}, it descends to the Poisson boundary, and the measure class $\nnu$ is quasi-invariant with respect to this action. Let
$$
\nu=\bnd\Pb
$$
denote the \textsf{harmonic measure corresponding to the group identity}. Then $\nu_g=g\nu$ for an arbitrary starting point $g\in G$, and $\nu_\tha=\tha*\nu$ for an arbitrary initial distribution $\tha$.

A function $f$ on the group $G$ is called \textsf{$\mu$-harmonic} if $Pf=f$ for the Markov operator~$P$ \eqref{eq:p}. By $H^\infty(G,\mu)$ we denote the Banach space of bounded $\mu$-harmonic functions on~$G$ endowed with the $\sup$-norm, on which the group $G$ acts by (left) translations. The harmonic measure $\nu$ is \textsf{$\mu$-stationary} in the sense that
$$
\nu = \bnd \Pb = \bnd (T\Pb) = \bnd \Pb_\mu = \sum_g \mu(g) g\nu = \mu*\nu \;.
$$
Therefore, for any $\wh f\in L^\infty(\Ps,\nnu)$ the \textsf{Poisson formula}
\begin{equation} \label{eq:poisson}
f(g)=\langle \wh f,g\nu\rangle
\end{equation}
produces a $\mu$-harmonic function. In fact, the correspondence \eqref{eq:poisson} is an isomorphism of the Banach spaces $H^\infty(G,\mu)$ and $L^\infty(\Ps,\nnu)$. Indeed, harmonicity of the function $f$ means that the sequence of functions
\begin{equation} \label{eq:limit}
F_n(\yb)=f(y_n)
\end{equation}
on the path space is a \emph{martingale} with respect to the increasing filtration of the coordinate $\si$-algebras $\Ag_0^n$. Therefore, by the martingale convergence theorem, the sequence $F_n$ converges almost surely to a $\Eg$-measurable limit function $F_\infty$, which produces a boundary function $\wh f$. Conversely, the Poisson integrals \eqref{eq:poisson} are essentially nothing other than the conditional expectations of the boundary function $\wh f$ with respect to the coordinate $\si$-algebras $\Ag_0^n$.

\subsection{The trunk criterion} \label{sec:trunk}

If the state space of a Markov chain is endowed with additional (geometrical, combinatorial, algebraic, etc.) structures, and the transition probabilities comply with (or, are governed by) them, then a natural question is to identify (describe) the Poisson boundary in terms of these structures. This problem usually splits into two quite different parts:
\begin{enumerate}[{\rm (i)}]
\item
to exhibit a (complete) sub-$\si$-algebra $\Bg$ of the exit $\si$-algebra $\Eg$, or, in terms of the associated quotient spaces, to exhibit a quotient $B$ of the Poisson boundary;
\item
to prove that the $\si$-algebra $\Bg$ coincides with the whole exit $\si$-algebra $\Eg$, i.e., that the ``candidate'' space $B$ is the Poisson boundary.
\end{enumerate}
In other words, first one has to exhibit a certain system of invariants (“patterns”) of the behaviour of the Markov chain at infinity, and then one has to show the completeness of this system, i.e., that these patterns completely describe the behaviour at infinity, see \cite{Kaimanovich96, Kaimanovich00a}. A particular case of this problem is proving that the Poisson boundary is trivial, in which situation the candidate space $B$ is just a singleton.

In the context of random walks on groups one usually applies the conditional entropy criterion \cite{Kaimanovich85, Kaimanovich00a} (a generalization of the entropy criterion for the triviality of the Poisson boundary \cite{Avez74, Derriennic80, Vershik-Kaimanovich79, Kaimanovich-Vershik83}) which requires finiteness of the entropy of the step distribution. However, there are situations when one can use the following very simple argument which imposes no \emph{a priori} assumptions on the considered Markov chains.

\begin{dfn} \label{dfn:trunk}
A \textsf{trunk}\footnotemark\, of a Markov chain on a countable state space $X$ is an infinite subset $A\subset X$ with the property that almost every sample path of the chain issued from any initial state visits all but finitely many points of $A$. We shall say that a Markov chain has the \textsf{trunk convergence} property with respect to a quotient $B$ of the Poisson boundary if for almost every point $b\in B$ the associated conditional Markov chain has a trunk.
\end{dfn}

\footnotetext{\;Our choice of terminology evokes the meaning of ``trunk'' as \emph{the main part of something as distinguished from its appendages} given by the Oxford English Dictionary, in particular,
\emph{the trunk of a tree.}}

\begin{prp}[\textsf{trunk criterion}] \label{prp:trunk}
If a Markov chain has the trunk convergence property with respect to a quotient $B$ of the Poisson boundary, then $B$ is actually the Poisson boundary of the chain.
\end{prp}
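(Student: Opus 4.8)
The plan is to reduce the statement to the triviality of the Poisson boundaries of the conditional chains, and then to show that the mere existence of a trunk already forces such triviality. Let $\Bg\subset\Eg$ be the complete sub-$\si$-algebra of the exit $\si$-algebra corresponding to the quotient $B$. Saying that $B$ is the whole Poisson boundary amounts to $\Bg=\Eg$ (mod~$0$), i.e.\ to the assertion that every bounded $\Eg$-measurable function $\wh f$ is $\Bg$-measurable. Disintegrating the reference measure over $B$ as $\Pb_\N=\int\Pb^b\,d\nu_B(b)$, this property is equivalent to $\wh f$ being $\Pb^b$-almost surely constant for $\nu_B$-almost every $b$. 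Since each $\Pb^b$ is the law of the associated conditional Markov chain, it therefore suffices to prove that a Markov chain admitting a trunk has a trivial Poisson boundary.

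First I would establish the following lemma: a Markov chain admitting a trunk $A$ has the property that every bounded $\Eg$-measurable function is almost surely constant. Indeed, let $\wh f$ be such a function and put $f(x)=\Eb_x\wh f$, a bounded harmonic function; by the martingale convergence theorem the Doob martingale $\Eb[\wh f\mid\Ag_0^n]=f(y_n)$ converges almost surely to $\wh f(\yb)$, so $f(y_n)\to\wh f(\yb)$. Enumerate $A=\{a_1,a_2,\dots\}$ and fix a generic path $\yb$. By the trunk property $\yb$ visits all but finitely many of the $a_i$; let $n_i$ be a passage time of $\yb$ through $a_i$. Since at most $N+1$ distinct states occur among $y_0,\dots,y_N$, only finitely many visited $a_i$ satisfy $n_i\le N$, and hence $n_i\to\infty$ along the cofinite set of visited indices. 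Consequently $f(a_i)=f(y_{n_i})\to\wh f(\yb)$ along that cofinite set. But $\bigl(f(a_i)\bigr)_i$ is a \emph{deterministic} sequence, and a cofinite subsequence of it converges to $\wh f(\yb)$; removing finitely many terms cannot affect convergence, so the whole sequence converges to a constant $L=\lim_i f(a_i)$ and $\wh f(\yb)=L$ for almost every $\yb$. Thus $\wh f$ is almost surely constant, and the Poisson boundary of the chain is trivial.

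To conclude, I would apply this lemma fibrewise. By the trunk convergence hypothesis, for $\nu_B$-almost every $b$ the conditional Markov chain $\Pb^b$ possesses a trunk; since $\wh f$ is $T$-invariant and $\Pb^b\ll\Pb_\N$, it is $\Eg$-measurable for the conditional chain as well, so the lemma forces $\wh f$ to be $\Pb^b$-almost surely constant. As this holds for every bounded $\Eg$-measurable $\wh f$ and almost every $b$, we obtain $\Bg=\Eg$, i.e.\ $B$ is indeed the Poisson boundary. The step I expect to require the most care is exactly this fibrewise passage: one must know that the conditional chains are genuine time-homogeneous Markov chains, so that the Doob martingale $\Eb[\wh f\mid\Ag_0^n]$ is of the form $f(y_n)$ for a single function $f$ (this homogeneity is precisely what makes $\bigl(f(a_i)\bigr)_i$ a deterministic sequence in the lemma), together with the measurability of the disintegration $b\mapsto(\Pb^b,A_b)$. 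Both are supplied by the $h$-transform description of conditioning a Markov chain on a quotient of its Poisson boundary.
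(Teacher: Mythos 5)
Your proof is correct and takes essentially the same route as the paper's: reduce to the conditional chains, then show that a chain with a trunk has trivial Poisson boundary because the martingale limits of any bounded harmonic function along two generic paths are forced to agree through the cofinitely many shared trunk values. You simply spell out in more detail (the deterministic sequence $\bigl(f(a_i)\bigr)$, the passage times tending to infinity, the Doob $h$-transform description of the conditional chains) what the paper's two-paragraph argument leaves implicit.
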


\begin{proof}
Let is consider first the situation when $B$ is a singleton, i.e., the original Markov chain itself has a trunk. Any bounded harmonic function $f$ converges along almost every sample path of the chain. However, by \dfnref{dfn:trunk} the limit boundary values \eqref{eq:limit} of $f$ are almost surely the same, so that $f$ is also constant, and therefore the Poisson boundary of the chain is trivial, i.e., coincides with the singleton~$B$.

Now, in the general case, if the conditions of the Proposition are satisfied, then by the above the Poisson boundary of almost every $B$-conditioned chain is trivial, which means that the projection of the Poisson boundary onto $B$ has trivial fibres.
\end{proof}

\begin{rem}
In what concerns transient random walks on trees, the idea that in the nearest neighbour case almost every sample path visits all points on the geodesic ray joining its starting and limit points was used already in the works of Dynkin~-- Malyutov \cite{Dynkin-Malutov61} and Cartier \cite{Cartier72} for the identification of the Martin boundary with the geometric boundary for the \emph{nearest neighbour random walks on free groups and trees}, respectively (also see Woess \cite{Woess86} for the case of the nearest neighbour random walks on general free products and a comprehensive exposition in Woess' book \cite[Section~26]{Woess00}). Our \prpref{prp:trunk} can be considered as an implementation of this idea in the conceptually simpler case of the Poisson boundary (cf.\ the comparative discussion of the Poisson and the Martin boundaries in \cite{Kaimanovich96}).

Yet another example of an application of the trunk criterion is provided by the \emph{occupation Markov chains in the presence of infinitely many strong cutpoints}. Given a Markov chain $(\xi_n)$ on a countable state space $X$, let the \textsf{occupation time}
$$
N_t(x)=\card\{n: 0\le n \le t,\; \xi_n=x\}
$$
be the number of visits of the chain to a point $x\in X$ up to time $t$. Then $(\xi_n,N_{n-1})$ is
the associated \textsf{occupation Markov chain}. If $(\xi_n)$ is the random walk on a group $G$ with a step distribution $\mu$, then the occupation chain is the random walk on the \emph{wreath product} $G\wr\ZZ$ with the step distribution $\mu\otimes\de_\uptheta$, where $\uptheta$ denotes the delta configuration at the identity of $G$ (see \cite{Kaimanovich85a, Kaimanovich91}). A point $\xi_k$ is called a \textsf{strong cutpoint} for a sample path $(\xi_n)$ if for any $0\le i < k <j$ the probability of transition from $\xi_i$ to $\xi_j$ is 0 (we use the terminology from James -- Lyons -- Peres \cite{James-Lyons-Peres08} which slightly differs from the one originally used by James -- Peres \cite{James-Peres96} who introduced this notion). As it was proved by James -- Peres \cite[Proposition 1.1]{James-Peres96}, if almost every sample path $(\xi_n)$ has infinitely many strong cutpoints, then the Poisson boundary of the occupation chain $(\xi_n,N_{n-1})$ coincides with the space of the limit occupation functions $N_\infty=\lim_n N_n$. This condition is satisfied for all transient simple random walks on groups, but there are examples of transient reversible Markov chains that almost surely have only a finite number of cutpoints (see \cite{James-Lyons-Peres08} and the references therein). Now, an infinite sequence of strong cutpoints is precisely a trunk for the occupation chain conditioned by the limit occupation function $N_\infty$.

Note that in all these situations the trunk condition is satisfied in a somewhat stronger form than required in \dfnref{dfn:trunk}: the trunk sets are actually linearly ordered which allows one to introduce a tree structure on the state space. Yet another (weaker) modification of the trunk condition can be obtained by requiring just that the intersection of the sets visited by any two sample paths be almost surely infinite; this is still sufficient to imply the result of \prpref{prp:trunk}.
\end{rem}

\subsection{Ladder adapted random walks}

We shall now combine the considerations from \secref{sec:ladders} concerning ladders and the associated forests on groups with the results on the records in i.i.d.\ sequences from \secref{sec:records} in order to obtain, by using the trunk criterion from \prpref{prp:trunk}, a complete description of the Poisson boundary of a class of random walks on groups admitting ladder structures.

\vfill\eject

\begin{thm} \label{thm:poi}
Let
\begin{enumerate}[\ib]
\item
$p=(p_j)$ be a probability measure on $\ZZ_+$ which satisfies the eventual record simplicity condition
of \lemref{lem:bsw}
$$
\sum_j \left( \frac{p_j}{p_j + p_{j+1} + \dots} \right)^2<\infty
$$
(for instance, any measure with polynomial decay);
\medskip
\item
$\Phi,\Psi:\NN\to\NN$ be the functions from \lemref{lem:psi} associated with the distribution $p$;
\medskip
\item
$\La=(\Sib,\Ab)$ be a $\Phi$-ladder on a countable group $G$ (see \dfnref{dfn:lad});
\medskip
\item
$\mu$ be a probability measure on the group $G$ with
$$
\supp\mu=\bigcup_{i=1}^\infty\Si_i \cup \bigcup_{i=0}^\infty A_i
$$
such that
$$
\mu(\Si_i\cup A_i)=p_i \;, \qquad \sum_i \frac{\mu(A_i)}{p_i}<\infty \;.
$$
\end{enumerate}
Then
\begin{enumerate}[{\rm (i)}]
\item
for any starting point $g\in G$ the sample paths of the random walk $(G,\mu)$ almost surely converge to the boundary $\p\Fc$ of the ladder forest $\Fc=\Fc(\La)$ (see \dfnref{dfn:forest}), and almost every sample path visits all sufficiently remote points on the corresponding limit geodesic ray in $\Fc$;
\item
the resulting harmonic (hitting) measures $\nu_g$ are concentrated on the $G$-invariant part $\p_G\Fc$ of the boundary $\p\Fc$;
\item
the Poisson boundary $\p_\mu G$ of the random walk $(G,\mu)$ coincides with the space $\p\Fc$ endowed with the family of measures $\nu_g$;
\item
the action of the group $G$ on the Poisson boundary $\p_\mu G$ is free (mod 0) with respect to the harmonic measure class, in particular, the Poisson boundary is non-trivial;
\item
the properties stated in \textup{(i)} also hold for the constrained subforest $\Fc_\Psi=\Fc_\Psi(\La)\subset \Fc$ determined by the ladder $\La$ and the constraining function $\Psi$ (see \dfnref{dfn:constr}), so that the hitting measures $\nu_g$ are in fact concentrated on the boundary $\p\Fc_\Psi\subset\p\Fc$.
\end{enumerate}
\end{thm}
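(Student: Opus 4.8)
The plan is to transport the record structure of the step heights into the forest $\Fc$ and then close with the trunk criterion. I would start from the increments $x_n$ of the walk and attach to each $n$ the \emph{level} $j_n$ determined by $x_n\in\Si_{j_n}\cup A_{j_n}$, together with the indicator $\ep_n$ equal to $0$ when $x_n\in\Si_{j_n}$ and $1$ when $x_n\in A_{j_n}$. Since $\mu(\Si_i\cup A_i)=p_i$, the levels $j_n$ are independent and $p$-distributed, while $\al_i=\mu(A_i)/p_i$ is exactly the coefficient appearing in \lemref{lem:pos}, whose hypotheses hold by the assumption on $p$ and by $\sum_i\mu(A_i)/p_i<\infty$. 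Applying \lemref{lem:pos} (together with \lemref{lem:bsw}) gives, almost surely for all large $k$, that the record value $R_k=j_{T_k}$ is simple \emph{and} is attained on the spiking population, so that $x_{T_k}\in\Si_{R_k}$ is a genuine spike of height $R_k$. I would also fix the functions $\Phi,\Psi$ of \lemref{lem:psi}, yielding $T_{k+1}\le\Phi(R_k)$ and $R_{k+1}\le\Psi(R_k)$ eventually; the ladder has gauge function $\la=\Phi$.

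The crux is a single combinatorial claim: for all large $k$ and every $n$ with $T_k\le n<T_{k+1}$, the factorisation
$$
y_n = \underbrace{y_{T_k-1}}_{\text{prefix}} \cdot \underbrace{x_{T_k}}_{\text{spike}} \cdot \underbrace{x_{T_k+1}\cdots x_n}_{\text{postfix}}
$$
is \emph{the} spike decomposition of $y_n$, of height $R_k$ and prefix $\sl{y_n}=y_{T_k-1}$. To prove it I bound both sides in $\De_{R_k}$. Every increment before $T_k$ has level $\le R_{k-1}<R_k$ (simplicity), and every increment strictly between $T_k$ and $T_{k+1}$ has level $<R_k$ (otherwise a record would be tied or exceeded before $T_{k+1}$); hence all these factors lie in $\De_{R_k}$, whence $|y_{T_k-1}|_{R_k}\le T_k-1$ and $|x_{T_k+1}\cdots x_n|_{R_k}\le n-T_k<T_{k+1}$. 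By \lemref{lem:psi} both quantities are $\le\Phi(R_k)=\la(R_k)$, i.e.\ condition \eqref{eq:lan} holds, and uniqueness from \dfnref{dfn:lad} upgrades this candidate to the actual spike decomposition. In particular $\sl{y_{T_{k+1}-1}}=y_{T_k-1}$, while $\bn{y_{T_k-1}}=R_{k-1}$ is strictly increasing, so $(y_{T_k-1})$ is eventually a geodesic ray, every intermediate $y_n$ is one of its neighbours, and the whole path converges to the common endpoint $\xi$; since the vertices of that ray are precisely the visited points $y_{T_k-1}$, this establishes claim (i), convergence together with the trunk property.

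For claim (ii) I would verify that $\xi$ lands in the explicit $G$-invariant set $\p_\sh\Fc\subset\p_G\Fc$ of \prpref{prp:free}: along the limit ray $\la(\bn{\ga_k})-|\ga_{k-1}|_{\bn{\ga_k}}\ge\Phi(R_k)-(T_k-1)$, and since $T_k\le\Phi(R_{k-1})$ with $R_k>R_{k-1}$ and $\Phi$ growing fast, this tends to $\infty$, which is exactly \eqref{eq:minus}. Claim (iii) is then the trunk criterion: the shift-invariant map $\bnd$ sending a path to its limit point exhibits $\p\Fc$, with the hitting measures $\nu_g$, as a quotient of the Poisson boundary, and by (i) almost every $\xi$-conditioned path has the ray to $\xi$ as a trunk, so \prpref{prp:trunk} identifies $\p\Fc=\Ps$. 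Claim (iv) combines (ii), (iii) and the freeness of the $G$-action on $\p_G\Fc$ from \prpref{prp:free}; non-triviality is automatic, as a free action on a single point would force $G=\{e\}$, whereas all vertices have infinite order by \prpref{prp:root}(iii). Finally, for claim (v) the same ray already lies in $\Fc_\Psi$, because consecutive heights satisfy $\bn{\ga_k}=R_k\le\Psi(R_{k-1})=\Psi(\bn{\ga_{k-1}})$ by \lemref{lem:psi}, so every edge $(y_n,y_{T_k-1})$ survives the constraint and claims (i)--(iv) transfer verbatim.

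The hard part will be the crux claim together with its probabilistic underpinning: one must simultaneously arrange, via \lemref{lem:pos}, that records are eventually \emph{simple and spiking}—without this $x_{T_k}$ need not be a spike and the whole identification collapses—and make the prefix/postfix length estimates meet the ladder threshold $\la(R_k)$ precisely through \lemref{lem:psi}. Everything downstream (the boundary convergence, the trunk criterion, freeness, and the constrained forest) is then essentially formal.
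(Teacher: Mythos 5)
Your proposal reproduces the paper's proof almost step for step: the same level/indicator bookkeeping feeding \lemref{lem:bsw}, \lemref{lem:pos} and \lemref{lem:psi}, the same identification of $y_{T_k-1}\cdot x_{T_k}\cdot(x_{T_k+1}\cdots x_n)$ as \emph{the} spike decomposition of $y_n$ for $T_k\le n<T_{k+1}$ via the uniqueness clause of \dfnref{dfn:lad}, the same trunk ray $(y_{T_k-1})$ and appeal to \prpref{prp:trunk} for (iii), to \prpref{prp:free} for (iv), and the bound $R_{k+1}\le\Psi(R_k)$ for (v). So this is essentially the paper's argument, and the crux claim is carried out correctly.

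Two local soft spots. First, your justification of claim (ii) does not hold as stated: you assert $\Phi(R_k)-(T_k-1)\to\infty$ ``since \dots $\Phi$ is growing fast'', but $\Phi$ is merely whatever non-decreasing function \lemref{lem:psi} supplies, and $R_k>R_{k-1}$ gives no control on $\Phi(R_k)-\Phi(R_{k-1})$. What you actually get from \lemref{lem:psi} is only $\Phi(R_k)-(T_k-1)\ge T_{k+1}-T_k+1$, so your route would additionally require $T_{k+1}-T_k\to\infty$ almost surely --- which is true but needs its own argument (e.g.\ $\Eb\bigl[\sum_k(1-F_{R_k-1})\bigr]=\sum_j\rho_j(1-F_{j-1})=1$ followed by conditional Borel--Cantelli). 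The paper sidesteps this entirely: by the standing assumption \eqref{eq:a0e} one has $e\in A_0$, hence $\mu(e)>0$, so the number of identity increments among $x_1,\dots,x_{T_k-1}$ diverges and $|y_{T_k-1}|_{R_k}$ falls short of $T_k-1$ by a diverging amount, which yields \eqref{eq:minus} directly. Second, you only treat the starting point $g=e$, whereas the statement is for arbitrary $g$; the extra term $|g|_{R_k}$ in the prefix length must be absorbed into the bound $\la(R_k)$, and the paper again uses the $\mu(e)>0$ slack (together with the stabilization \eqref{eq:ni} of the lengths $|g|_n$) for this. Both issues are repairable without changing the structure of the proof.
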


\begin{proof}[Proof of \thmref{thm:poi}]

\;(i)\; We consider first the situation when the starting point $g$ is the group identity~$e$. Let $(x_n)$ be a sequence of independent $\mu$-distributed increments of the random walk $(G,\mu)$, and let
$$
y_n = x_1 x_2 \dots x_n
$$
be the associated sample path of the random walk. Further, let $T_k$ and $R_k=X_{T_k}$ be, respectively, the record times \eqref{eq:tk} and the associated record values of the sequence of independent $p$-distributed random variables $X_n=\ze(x_n)$, where $\ze(g)=j$ whenever $g\in \Si_j\cup A_j$. Then by \lemref{lem:bsw}, \lemref{lem:pos}, and \lemref{lem:psi}
almost surely there exists an integer $k_0$ such that for all $k\ge k_0$
\begin{enumerate}[{\rm (a)}]
\item
$R_k$ is simple, and therefore $X_n<R_k$ not only for all $n<T_k$, but also for all $n$ in the interval from $T_k$ to the next record time $T_{k+1}$;
\item
$x_{T_k}\in \Si_{R_k}$;
\item
$T_{k+1}\le \Phi(R_k)$.
\end{enumerate}
In terms of the increments $x_n$ (a) means that
\begin{equation} \label{eq:xn}
x_n \in \bigcup_{i=1}^{R_k-1} \Si_i \; \cup \; \bigcup_{i=0}^{R_k-1} A_i \subset \De_{R_k}
\qquad\forall\, n<T_{k+1}, n\neq T_k
\end{equation}
(see \eqref{eq:de} for the definition of the sets $\De_n$). Therefore, in view of the bound (c), for any $T_k\le n<T_{k+1}$ the position $y_n$ of the random walk at time $n$ is spiked with respect to the ladder $\La=(\Phi,\Sib,\Ab)$ (see \dfnref{dfn:spike}) with the spike
\begin{equation} \label{eq:yn}
\wh{\,y_n} = x_{T_k} \;,
\end{equation}
the height $\bn{ y_n }=R_k$, the prefix
$$
\sl{\ms y_n} = x_1 x_2 \dots x_{T_k-1} = y_{T_k-1} \;.
$$
and the postfix
$$
\sr{\ms y_n} = x_{T_k+1} \dots x_n = y_{T_k}^{-1} y_n \;.
$$
In particular,
$$
\sl{\ms y_{T_{k+1}-1}} = y_{T_k-1} \;.
$$
Thus, $\left(y_{T_k-1}\right)_{k\ge k_0}$ is a geodesic ray and the sequence~$(y_n)$ converges to a boundary point $y_\infty\in\p\Fc$, see \figref{fig:forestwalk} from the Introduction. Moreover, the ray $\left(y_{T_k-1}\right)_{k\ge k_0}$ is a trunk of the sample path $(y_n)$.

The above argument is also applicable to the situation when the starting point $g\in G$ is arbitrary. Indeed, its lengths \eqref{eq:length} eventually stabilize, see \eqref{eq:ni}. On the other hand, since $\mu(e)>0$ by assumption \eqref{eq:a0e}, the number of $e$'s among the first $n$ increments $x_1,x_2,\dots, x_n$ of the random walk almost surely goes to infinity as $n\to\infty$, so that for all sufficiently large $n$ the position $y_n$ is still spiked with the spike \eqref{eq:yn} and the prefix
$$
\sl{\ms y_n} = g x_1 x_2 \dots x_{T_k-1} = y_{T_k-1} \;.
$$

\;(ii)\; By assumption \eqref{eq:a0e}, $\mu(e)>0$, which implies that there are infinitely many identity increments in the sequence $(x_n)$, so that the limit of almost every sample path belongs to $\p_\sh\Fc\subset \p_G\Fc$ (see the proof of \prpref{prp:free}).

\;(iii)\; As we have already noticed in the proof of (i), the geodesic ray in $\Fc$ determined by a boundary point from $\p\Fc$ serves as a trunk for the $\p\Fc$-conditioned random walk. Therefore, the claim follows from the trunk criterion \prpref{prp:trunk}.

\;(iv)\; This is a consequence of \prpref{prp:free}.

\;(v)\; By \lemref{lem:psi} almost surely $R_{k+1} \le \Psi(R_k)$ for all sufficiently large $k$, which, in view of \eqref{eq:xn}, means that for all sufficiently large $n$ the edges $\left(y_n, \sl{\ms y_n}\right)$ are also present in the constrained subforest $\Fc_\Psi\subset \Fc$. Thus, almost every sample path $(y_n)$ also converges in $\Fc_\Psi$, and for sufficiently large $k$ the points $y_{T_k-1}$ form a geodesic ray in $\Fc_\Psi$.
\end{proof}

\section{Existence of ladders} \label{sec:sicc}

If a group $G$ has a non-trivial finite \textsf{conjugacy class}
\begin{equation} \label{eq:conj}
\Cc_g=\{h^{-1}gh : h\in G\} \;,
\end{equation}
then it has no ladders in the sense of \dfnref{dfn:lad}. Indeed, if $\La=(\la,\Sib,\Ab)$ is a ladder, and
$\Cc_g$ is finite, then $\Cc_g\subset\De_n^{\la(n)}$ for all sufficiently large $n$ by condition \eqref{eq:asi}, so that the identity
$$
g \cdot \si = \si \cdot ( \si^{-1} g \si)
$$
with $\si\in\Si_n$ produces a contradiction to the uniqueness of the spike decomposition. The purpose of this Section is to show that, in fact, the presence of non-trivial finite conjugacy classes is the only obstacle to the existence of ladders, and that ladders with arbitrary gauge functions exist on any \textsf{ICC group} (i.e., one in which the conjugacy class of any non-identity element is infinite).

\subsection{Switcher ladders}

We begin by describing a special class of ladders. Given a subset~$Z$ of a group $G$, we denote by $\mr{Z}=Z\sm\{e\}$ its \textsf{puncture} obtained by removing the group identity $e$.

\begin{dfn} \label{dfn:sset}
A non-empty set $\Si\subset \mr{G}$ is called \textsf{switching} for a subset $Z$ of a group $G$ (or \textsf{$Z$-switching} in short) if the equality
$$
\si z = z' \si'
$$
with $z,z'\in Z$ and $\si,\si'\in\Si$ is only possible when $z=z'=e$ (and therefore $\si=\si'$), in other words, if
$$
\Si \mr{Z} \cap Z \Si = \Si Z \cap \mr{Z} \Si = \vn \;.
$$
We denote by $\Sfb(Z)$ the collection of all $Z$-switching subsets of $G$.
\end{dfn}

\begin{rem} \label{rem:empty}
If $\Si$ is a $Z$-switching set, then obviously $\Si\cap Z=\vn$. In the case when $Z$ does not contain the group identity $e$, i.e., when $\mr{Z}=Z$, \dfnref{dfn:sset} just means that $\Si Z \cap Z \Si=\vn$. However, note the asymmetry of this definition with respect to $\Si$ and $Z$ (it will be used below): the group identity is allowed to belong to $Z$, but not to $\Si$.
\end{rem}

\begin{ex}
Let $\Fo_2=\langle a,b \rangle$ be the free group with free generators $a,b$. Then the set $a\Fo_2^+$ of all positive words which begin with $a$ is switching for the set $b\Fo_2^+$ of all positive words which begin with $b$. For yet another example of a similar nature let $\displaystyle G=\free_{i=1}^\infty G_i$ be the countable free product of countable groups $G_i$. Then for any $n\ge 1$
$$
\mr{\overgroup{{\left(\free_{i=n+1}^\infty G_i\right) }}} \in \Sfb \left( \free_{i=1}^n G_i \right) \;.
$$
\end{ex}

\begin{lem} \label{lem:5la}
If $\La=(\la,\Sib,\Ab)$ is a scale on a countable group $G$ such that
\begin{equation} \label{eq:5la}
\Si_n \in \Sfb \Bigl( \De_n^{5\la(n)} \Bigr) \qquad\forall\, n\ge 1 \;,
\end{equation}
then $\La$ is a ladder.
\end{lem}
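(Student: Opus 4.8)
The plan is to verify the two defining conditions of a ladder (Definition~\ref{dfn:lad}) separately: condition~(ii) will fall out almost for free, and the real work is the uniqueness condition~(i).

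First I would isolate the one elementary consequence of the switching hypothesis that drives the whole argument. Since $\Si_n\in\Sfb\bigl(\De_n^{5\la(n)}\bigr)$, Remark~\ref{rem:empty} gives
$$
\Si_n\cap\De_n^{5\la(n)}=\vn\qquad\forall\,n\ge 1\;,
$$
and in particular $\Si_n\cap\De_n^{3\la(n)}=\vn$, which is exactly the escape condition~\eqref{eq:si}. By Remark~\ref{rem:escape} this already yields inequality~\eqref{eq:ineq}, i.e.\ condition~(ii) of Definition~\ref{dfn:lad}. It then remains only to prove that every $g\in G$ admits at most one spike decomposition.

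So suppose $g=a\,\si\,b=a'\,\si'\,b'$ are two spike decompositions in the sense of Definition~\ref{dfn:spike}, with $\si\in\Si_m$, $\si'\in\Si_n$, and $|a|_m,|b|_m\le\la(m)$, $|a'|_n,|b'|_n\le\la(n)$; we may assume $m\le n$. If $m=n$, I would rewrite the equality as
$$
\si\cdot\bigl[b(b')^{-1}\bigr]=\bigl[a^{-1}a'\bigr]\cdot\si'\;.
$$
Because $\De_n$ is symmetric under inversion (see~\eqref{eq:de}), subadditivity of the lengths~\eqref{eq:length} gives $b(b')^{-1},\,a^{-1}a'\in\De_n^{2\la(n)}\subset\De_n^{5\la(n)}$, so the switching property of $\Si_n$ (Definition~\ref{dfn:sset}) forces $b(b')^{-1}=a^{-1}a'=e$ and $\si=\si'$; hence $a=a'$, $b=b'$, and the two decompositions coincide.

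The remaining case $m<n$ is where the constant~$5$ is used, and is the step I expect to require the most care. Here I would solve for the taller spike,
$$
\si'=(a')^{-1}\,a\,\si\,b\,(b')^{-1}\;,
$$
and bound its $\De_n$-length. Since $\De_m\subset\De_n$ and $\la$ is non-decreasing, monotonicity~\eqref{eq:ni} gives $|a|_n\le|a|_m\le\la(m)\le\la(n)$ and likewise $|b|_n\le\la(n)$, while $\si\in\Si_m\subset\De_n$ contributes $|\si|_n\le 1$; together with $|a'|_n,|b'|_n\le\la(n)$ this yields
$$
|\si'|_n\le 4\la(n)+1\le 5\la(n)\;,
$$
so that $\si'\in\Si_n\cap\De_n^{5\la(n)}$, contradicting the first step. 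Thus two decompositions of different heights cannot occur, and combined with the case $m=n$ this establishes uniqueness. The only point to watch is the harmless normalisation $\la(n)\ge 1$ (needed for $4\la(n)+1\le 5\la(n)$), which holds since the gauge takes positive integer values; the genuine content is merely that the factor~$5$ is large enough to absorb the accumulated $\De_n$-lengths ($2\la(n)$ when $m=n$, and $4\la(n)+1$ when $m<n$) into the switching ball $\De_n^{5\la(n)}$.
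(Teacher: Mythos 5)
Your proposal is correct and follows essentially the same route as the paper's proof: Remark~\ref{rem:empty} plus Remark~\ref{rem:escape} dispose of condition~(ii), the equal-height case is handled by the identity $\si\cdot\bigl[b(b')^{-1}\bigr]=\bigl[a^{-1}a'\bigr]\cdot\si'$ with both bracketed factors in $\De_n^{2\la(n)}\subset\De_n^{5\la(n)}$, and the unequal-height case by solving for the taller spike and bounding its length by $4\la(n)+1\le 5\la(n)$ to contradict $\Si_n\cap\De_n^{5\la(n)}=\vn$. The only difference is cosmetic (order of the cases and notation).
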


\begin{proof}
Let us first notice that condition \eqref{eq:5la} implies, in view of \remref{rem:empty}, that
\begin{equation} \label{eq:side}
\Si_n \cap \De_n^{5\la(n)} = \vn \qquad\forall\, n\ge 1 \;,
\end{equation}
or, in other words, that
$$
|\si|_n > 5\la(n) \qquad\forall\,\si\in \Si_n,\; n \ge 1 \;.
$$
Thus, in view of \remref{rem:escape}, we only have to verify condition (i) from \dfnref{dfn:lad}. Let
\begin{equation} \label{eq:twospike}
g = g_- \cdot \si \cdot g_+ = g_-' \cdot \si' \cdot g_+'
\end{equation}
be two spike decompositions of the same $g\in G$ of heights $n$ and $n'$, respectively.

If $n\neq n'$, for instance, $n>n'$, then
$$
\si = g_-^{-1} \cdot g_-' \cdot  \si' \cdot g_+' \cdot  g_+^{-1} \in \De_n^{5\la(n)} \;,
$$
which contradicts \eqref{eq:side}.

If $n=n'$, then
$$
\si \cdot g_+ \cdot (g_+')^{-1} = g_-^{-1} \cdot g_-' \cdot \si'
$$
with $\si,\si'\in\Si_n$ and
$$
g_+ \cdot (g_+')^{-1}, g_-^{-1} \cdot g_-' \in \De_n^{2\la(n)} \subset \De_n^{5\la(n)} \;,
$$
which, in view of condition \eqref{eq:5la}, is only possible when the spike decompositions \eqref{eq:twospike} coincide.
\end{proof}

\subsection{Negligible subsets in groups} \label{sec:negl}

We shall need a notion of ``smallness'' in infinite groups.

\begin{dfn} \label{dfn:negl}
A subset of an infinite group $G$ is called \textsf{negligible} if it can be covered by a finite union of cosets of infinite index subgroups of $G$, and it is called \textsf{non-negligible} otherwise.
\end{dfn}

\begin{rem}
Any right coset $Hg$ can be presented as a left coset $g\cdot g^{-1} H g$ of the conjugate subgroup $g^{-1} H g$, so that the class of negligible sets introduced in \dfnref{dfn:negl} remains the same if one considers the left cosets only.
\end{rem}

\begin{rem} \label{rem:dens}
Any left-invariant mean $m$ on an amenable group $G$ assigns measure 0 to any coset of any infinite index subgroup. Therefore, in this situation $m(A)=0$ for any negligible subset $A\subset G$, but the converse is far from being true. For instance, the only infinite index subgroup of the group of integers $\ZZ$ is the identity subgroup, so that a subset $A\subset\ZZ$ is negligible if and only if it is finite, and therefore the set $A=\{2^n: n\ge 0\}\subset\ZZ$ is non-negligible. However, its measure is 0 for any invariant mean on $\ZZ$, because the intersection of any two translates of $A$ contains at most one point.
\end{rem}

Since any group element can be considered as a coset of the identity subgroup, our starting observation is obvious:

\begin{lem} \label{lem:ab}
If $A\subset G$ is non-negligible, and $B\subset G$ is negligible, then $A\sm B$ is infinite.
\end{lem}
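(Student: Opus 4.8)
The plan is to argue by contradiction. Suppose that $A\sm B$ were finite. Since trivially $A\subset B\cup(A\sm B)$, it suffices to show that the right-hand side is negligible, for this contradicts the hypothesis that $A$ is non-negligible.

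The key point, already flagged in the text preceding the statement, is that any single group element $g$ coincides with the coset $g\{e\}$ of the trivial subgroup $\{e\}$, and in an \emph{infinite} group $G$ the subgroup $\{e\}$ has infinite index. Hence any finite subset of $G$ is a finite union of cosets of an infinite index subgroup, and is therefore negligible in the sense of \dfnref{dfn:negl}. In particular, under our assumption the finite set $A\sm B$ is negligible.

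By hypothesis $B$ is also negligible, so it is covered by finitely many cosets $g_i H_i$ with each $H_i$ of infinite index in $G$. Taking the union of this finite family with the finitely many singletons covering $A\sm B$, we obtain a finite collection of cosets of infinite index subgroups whose union contains $B\cup(A\sm B)\supset A$. Thus $A$ is negligible, contradicting the assumption and completing the proof. I do not expect any genuine obstacle here: the statement is a direct unwinding of \dfnref{dfn:negl}, the only substantive ingredient being the observation that singletons are cosets of the (infinite index) trivial subgroup.
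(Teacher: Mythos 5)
Your proof is correct and is exactly the argument the paper has in mind: the paper states the lemma as ``obvious'' immediately after observing that any group element is a coset of the identity subgroup, which is precisely the covering argument you spell out. No issues.
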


The \emph{raison d'\^etre} of \dfnref{dfn:negl} is

\begin{lem}[\textsf{Neumann's Lemma}] \label{lem:negl}
Any infinite group is non-negligible, i.e., it can not be covered by a finite union of cosets of infinite index subgroups.
\end{lem}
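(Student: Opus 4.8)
The plan is to prove the contrapositive in its sharp form: if a finite family of cosets covers $G$, then at least one of the underlying subgroups has finite index. So I would start from a covering $G=\bigcup_i g_iH_i$ and first \emph{group the cosets by their subgroup}, collecting together all cosets of one and the same subgroup. This lets me rewrite the covering as $G=\bigcup_{j=1}^k C_j$, where $H_1,\dots,H_k$ are the \emph{distinct} subgroups that occur and each $C_j$ is a finite union of (left) cosets of $H_j$. Thanks to the remark preceding the lemma it costs nothing to work with left cosets throughout. The proof then runs by induction on the number $k$ of distinct subgroups.

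For the base case $k=1$, the cosets of $H_1$ partition $G$, so a finite union of them equals $G$ only if there are finitely many cosets in all, i.e.\ $[G:H_1]<\infty$. Hence finitely many cosets of a single infinite-index subgroup can never cover the infinite group $G$. For the inductive step I assume, toward a contradiction, that \emph{all} of $H_1,\dots,H_k$ have infinite index. Since $[G:H_k]=\infty$ while $C_k$ uses only finitely many cosets of $H_k$, there is a coset $xH_k$ not occurring in $C_k$; being a distinct coset it is disjoint from $C_k$, and therefore $xH_k\subseteq\bigcup_{j<k}C_j$.

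The key manoeuvre is then a \emph{left translation}. For each coset $aH_k$ appearing in $C_k$, I multiply the inclusion $xH_k\subseteq\bigcup_{j<k}C_j$ on the left by $ax^{-1}$, using that $ax^{-1}\cdot xH_k=aH_k$ and that left multiplication carries a coset $gH_j$ to the coset $(ax^{-1}g)H_j$ of the \emph{same} subgroup. This gives $aH_k\subseteq\bigcup_{j<k}ax^{-1}C_j$, a finite union of cosets of $H_1,\dots,H_{k-1}$. Running over the finitely many cosets constituting $C_k$ absorbs $C_k$ entirely into cosets of the remaining subgroups, so that $G=C_k\cup\bigcup_{j<k}C_j$ is covered by finitely many cosets of only $H_1,\dots,H_{k-1}$, all still of infinite index, contradicting the inductive hypothesis.

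The main obstacle, and the only step requiring real care, is this inductive step: one has to check that left multiplication genuinely preserves ``being a coset of the fixed subgroup $H_j$'' (which it does, $g'\cdot gH_j=(g'g)H_j$), and that the auxiliary coset $xH_k$ actually exists — this is precisely where the infinite index of $H_k$ is used. Everything else is routine finite bookkeeping with cosets, and the argument is the classical one of Neumann.
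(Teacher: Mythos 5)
Your proof is correct and is essentially the same argument as the paper's: both group the cosets by their (distinct) underlying subgroups, use the infinite index of one of them to find a coset $xH_k$ disjoint from the cosets of $H_k$ used in the cover, and then translate the resulting inclusion $xH_k\subseteq\bigcup_{j<k}C_j$ by $ax^{-1}$ to absorb the cosets of $H_k$ into cosets of the remaining subgroups, descending until a single subgroup remains. The paper phrases this as a descent rather than an induction on $k$, but the manoeuvre is identical to Neumann's classical argument that you reproduce.
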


Then \lemref{lem:ab} and \lemref{lem:negl} imply

\begin{cor}\label{cor:negl}
If the complement $G\sm A$ of a subset $A$ of an infinite group $G$ is negligible, then $A$ is infinite.
\end{cor}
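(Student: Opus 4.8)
The plan is to combine the two preceding results in the only way available, reading $A$ as the complement of a negligible set inside the ambient group. First I would set $B=G\sm A$, which is negligible by hypothesis, and record the purely set-theoretic identity $A=G\sm B$, valid because $A\subseteq G$. The second ingredient is that the whole group $G$ is itself non-negligible: this is exactly Neumann's Lemma (\lemref{lem:negl}), which asserts that an infinite group cannot be written as a finite union of cosets of infinite-index subgroups.

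With these two facts in hand, I would apply \lemref{lem:ab} with the non-negligible set taken to be $G$ itself and the negligible set taken to be $B$. The lemma then yields immediately that $G\sm B$ is infinite, and since $G\sm B=A$ this is precisely the assertion of the corollary.

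I do not expect any genuine obstacle: the statement is a direct formal consequence of \lemref{lem:ab} and \lemref{lem:negl}, and the only point requiring a moment's care is the bookkeeping with complements, namely the identity $G\sm(G\sm A)=A$ for $A\subseteq G$. As a consistency check, the degenerate case $A=\vn$ is automatically ruled out, since it would force the whole group $G=G\sm A$ to be negligible, in contradiction with \lemref{lem:negl}.
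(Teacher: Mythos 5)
Your argument is correct and is exactly the one the paper intends: the corollary is stated as an immediate consequence of \lemref{lem:ab} (applied with the non-negligible set equal to $G$, which is non-negligible by Neumann's Lemma \ref{lem:negl}) and the negligible set $B=G\sm A$. Nothing is missing.
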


The proof below is a somewhat streamlined version of the argument used by Neumann in the course of establishing a \emph{quantitative version} of \lemref{lem:negl}: if a group is covered by a finite union of cosets, then the sum of the inverse indices of the corresponding subgroups is not smaller than 1 \cite[(4.5)]{Neumann54}.

\begin{proof}[Proof of \lemref{lem:negl}]
We have to show that an infinite group $G$ can not be presented as
\begin{equation} \label{eq:cd}
G = \bigcup_{i=1}^n A_i G_i \;,
\end{equation}
where the subsets $A_i\subset G$ are finite, and the infinite index subgroups $G_i$ are now assumed to be \emph{distinct}. For $n=1$ this is clearly impossible, because the index of $G_1$ is infinite.

Therefore, if one has a presentation \eqref{eq:cd} with $n\ge 2$, then $G\neq A_1 G_1$, which means that there exists $g\in G\sm A_1 G_1$, or, equivalently, such that $g G_1\cap A_1 G_1=\vn$. Then \eqref{eq:cd} implies that
$$
gG_1 \subset \bigcup_{i=2}^n A_i G_i \;,
$$
whence
$$
A_1 G_1 \subset \bigcup_{i=2}^n A_1 g^{-1} A_i G_i \;,
$$
and one can rewrite \eqref{eq:cd} by using one subgroup less. By continuing this descent one arrives at $n=1$, which is impossible as we have already seen.
\end{proof}

\subsection{Switching and superswitching elements} \label{sec:s}

The switching elements defined by Frisch -- Tamuz -- Vahidi Ferdowsi in \cite{Frisch-Tamuz-Ferdowsi18} are, in our terminology, the one-point switching sets (see \dfnref{dfn:sset}).

\begin{dfn} \label{dfn:s}
Given a group $G$ and a subset $Z\subset G$, an element $\si\in G$ is called \textsf{$Z$-switching} if the one-point set $\Si=\{\si\}$ is $Z$-switching, i.e., if the equality
$$
\si x \si^{-1} = y
$$
with $x,y\in Z$ is only possible if $x=y=e$. We denote by $\Sf_1(Z)
\subset G$ the set of all $Z$-switching elements.
\end{dfn}

For any $x,y\in G$ let
\begin{equation} \label{eq:sxy}
\Sc_{x\to y} = \{ g\in G : g x g^{-1} = y \}
\end{equation}
be the set of elements $g$ whose action on $G$ by conjugation moves $x$ to $y$, so that the stabilizer
$$
\Sc_{x\to x} = \{ g\in G : g x g^{-1} = x \} = \{ g\in G : g x = x g \} = \C(x)
$$
is the centralizer of $x$ in $G$, and
\begin{equation} \label{eq:sgxx}
\Sc_{x\to y} = g \Sc_{x\to x} = g \C(x) \qquad\forall\,g\in \Sc_{x\to y} \;.
\end{equation}
In these terms
\begin{equation} \label{eq:us}
\Sf_1(Z) = G \;\; \sm \bigcup_{x,y\in Z\sm\{e\}} \Sc_{x\to y} \;.
\end{equation}

If $G$ is an ICC group, then the index of $\C(x)$ in $G$ is infinite for any non-trivial element $x\in G$ (see \secref{sec:icc} below), so that in this situation $\Sf(Z)$ is the complement of a negligible set by \eqref{eq:sgxx} and \eqref{eq:us}, and \corref{cor:negl} implies

\begin{lem}[{\cite[Claim 3.3]{Frisch-Tamuz-Ferdowsi18}}] \label{lem:s}
For any finite subset $Z$ of an ICC group the associated set $\Sf_1(Z)$ of switching elements is infinite.
\end{lem}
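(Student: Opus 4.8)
The plan is to read $\Sf_1(Z)$ directly off the identity \eqref{eq:us}, which presents it as the complement in $G$ of the union $\bigcup_{x,y\in Z\sm\{e\}}\Sc_{x\to y}$, and to show that this union is \emph{negligible} in the sense of \dfnref{dfn:negl}; \corref{cor:negl} will then immediately give that $\Sf_1(Z)$ is infinite.

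First I would note that finiteness of $Z$ makes the index set of pairs $(x,y)\in(Z\sm\{e\})^2$ finite, so the union is genuinely finite. Then, for each such pair, the set $\Sc_{x\to y}$ of \eqref{eq:sxy} is by \eqref{eq:sgxx} either empty (precisely when $x$ and $y$ fail to be conjugate) or a single left coset $g\,\C(x)$ of the centralizer $\C(x)=\Sc_{x\to x}$. Discarding the empty ones, $\bigcup_{x,y}\Sc_{x\to y}$ is thus covered by finitely many left cosets of the centralizers $\C(x)$, $x\in Z\sm\{e\}$.

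The one substantive ingredient is that each such centralizer has \emph{infinite} index in $G$. I would obtain this from the orbit--stabilizer correspondence for the conjugation action: the conjugacy class $\Cc_x$ of \eqref{eq:conj} is in bijection with $G/\C(x)$, whence $[G:\C(x)]=\card\Cc_x$, which is infinite for every $x\neq e$ precisely because $G$ is ICC (this is exactly the fact deferred to \secref{sec:icc}). Consequently the finite union above is a finite union of cosets of infinite-index subgroups, i.e.\ it is negligible, and since by \eqref{eq:us} it equals $G\sm\Sf_1(Z)$, \corref{cor:negl} finishes the argument. Once this infinite-index fact is granted the rest is pure bookkeeping; the genuine content is pushed into Neumann's Lemma \lemref{lem:negl} underlying \corref{cor:negl}, which guarantees that finitely many infinite-index cosets can never exhaust $G$. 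No case analysis on $Z$ is required: if $Z\sm\{e\}=\vn$ the union is empty, hence trivially negligible, and $\Sf_1(Z)=G$ is infinite.
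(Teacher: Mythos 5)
Your argument is correct and is essentially identical to the paper's: the paper also observes that by \eqref{eq:sgxx} and \eqref{eq:us} the set $\Sf_1(Z)$ is the complement of a finite union of cosets of centralizers $\C(x)$, $x\in Z\sm\{e\}$, each of infinite index because $G$ is ICC, and then invokes \corref{cor:negl}. Nothing is missing.
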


The super-switching elements defined by Frisch -- Hartmann -- Tamuz -- Vahidi Ferdowsi \cite{Frisch-Hartman-Tamuz-Ferdowsi18} correspond, in our terminology, to the switching sets of the form $\{\si,\si^{-1}\}$, see \dfnref{dfn:sset}.

\begin{dfn} \label{dfn:ss}
Given a group $G$ and a subset $Z\subset G$, an element $\si\in G$ is called \textsf{$Z$-superswitching} if the set $\Si=\{\si,\si^{-1}\}$ is $Z$-switching, i.e., if the equality
$$
\si x \si^\ep = y
$$
with $x,y\in Z$ and $\ep=\pm 1$ is only possible if $x=y=e$ (so that either $\ep=-1$, or $\ep=1$ and $\si$~is an involution). We denote by $\Sf_{\pm 1}(Z)\subset G$ the \textsf{set of all $Z$-superswitching elements}.
\end{dfn}

In the same way as we have defined the sets $\Sc_{x\to y}$ \eqref{eq:sxy}, let us introduce the sets
$$
\breve\Sc_{x\to y} = \{g\in G: gxg=y \} \;, \qquad x,y\in G \;,
$$
so that in these terms
\begin{equation} \label{eq:u}
\Sf_{\pm 1}(Z) = G \sm \left[ \; \bigcup_{x,y\in Z\sm\{e\}} \Sc_{x\to y} \quad
\cup \; \bigcup_{x,y\in Z: (x,y)\neq (e,e)} \breve\Sc_{x\to y} \;
\right]
\end{equation}
(the difference in the parameterizations of the unions of the sets $\Sc_{x\to y}$ and $\breve\Sc_{x\to y}$ is due to the fact that $\Sc_{x\to e}$ and $\Sc_{e\to x}$ are trivially empty for $x\neq e$). One can still link the sets $\breve\Sc_{x\to y}$ with the cosets of centralizers (cf.\ \eqref{eq:sgxx} above), although in a somewhat more complicated way.

\begin{lem} \label{lem:sixy}
$\breve\Sc_{y\to x} \, \breve\Sc_{x\to y}\subset \C(xy)$ for any $x,y\in G$, i.e., equivalently, $\breve\Sc_{x\to y}\subset g\C(xy)$ for any $x,y\in G$ and $g\in\breve\Sc_{x\to y}$.
\end{lem}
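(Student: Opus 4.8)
The plan is to prove the product inclusion $\breve\Sc_{y\to x}\,\breve\Sc_{x\to y}\subset\C(xy)$ directly by a substitution computation, and then to deduce the coset reformulation formally. First I would unwind the definition: an element $g\in\breve\Sc_{x\to y}$ is exactly one satisfying $gxg=y$, and this single relation can be rewritten in the two equivalent forms $gx=yg^{-1}$ and $xg=g^{-1}y$. Likewise $h\in\breve\Sc_{y\to x}$ means $hyh=x$, i.e.\ $hy=xh^{-1}$ and $yh=h^{-1}x$. After this, all that remains is to show that for such $g,h$ the product $hg$ centralises $xy$.

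For the computation I would verify $hg\cdot xy=xy\cdot hg$ by reducing both sides to a common word. Inserting the relations,
\[
(hg)(xy)=h(gx)y=h\bigl(yg^{-1}\bigr)y=(hy)g^{-1}y=\bigl(xh^{-1}\bigr)g^{-1}y=xh^{-1}g^{-1}y,
\]
while
\[
(xy)(hg)=x(yh)g=x\bigl(h^{-1}x\bigr)g=xh^{-1}(xg)=xh^{-1}\bigl(g^{-1}y\bigr)=xh^{-1}g^{-1}y.
\]
Since both reduce to $xh^{-1}g^{-1}y$, we obtain $hg\in\C(xy)$, which is the first inclusion. I expect this step to be entirely mechanical; the only point requiring care is that $t\mapsto gtg$ is \emph{not} a conjugation (it mixes $g$ and $g^{-1}$), so each of the four rewrites above must be applied on the correct side. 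This bookkeeping is the closest thing to an obstacle, but no genuine difficulty arises.

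Finally, for the equivalence with the coset statement I would use the symmetry $g\in\breve\Sc_{x\to y}\iff g^{-1}\in\breve\Sc_{y\to x}$ (both say $gxg=y$, the second via $x=g^{-1}yg^{-1}$). Given $g,g'\in\breve\Sc_{x\to y}$, this gives $g^{-1}\in\breve\Sc_{y\to x}$, whence $g^{-1}g'\in\breve\Sc_{y\to x}\,\breve\Sc_{x\to y}\subset\C(xy)$ by the inclusion just proved, so $g'\in g\C(xy)$; as $g'$ ranges over $\breve\Sc_{x\to y}$ this yields $\breve\Sc_{x\to y}\subset g\C(xy)$. The reverse implication is the same manipulation run backwards, so the two formulations are indeed equivalent. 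The conceptual content worth recording is that two ``reflection-type'' elements compose to a ``rotation'': neither $g$ nor $h$ alone fixes $xy$ under conjugation, yet their product $hg$ does.
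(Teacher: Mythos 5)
Your proof is correct and follows essentially the same route as the paper: both unwind $gxg=y$ and $hyh=x$ into one-sided ``commutation relations'' and verify by direct substitution that the product lies in $\C(xy)$ (the paper pushes $xy$ through the product in a single chain, while you reduce both $(hg)(xy)$ and $(xy)(hg)$ to the common word $xh^{-1}g^{-1}y$, which is the same computation organized symmetrically). Your derivation of the coset reformulation via $g\in\breve\Sc_{x\to y}\iff g^{-1}\in\breve\Sc_{y\to x}$ is also exactly the identification $\breve\Sc_{y\to x}=\breve\Sc_{x\to y}^{-1}$ that the paper uses implicitly.
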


\begin{proof}
Let $g\in\breve\Sc_{y\to x}=\breve\Sc_{x\to y}^{-1}$ and $h\in\breve\Sc_{x\to y}$, i.e., $gyg=x$ and $hxh=y$, so that one has the ``commutation relations''
$$
hx = y h^{-1} \;,\quad gy = x g^{-1} \;,\quad h^{-1} y = xh \;,\quad g^{-1}x = yg \;.
$$
Their consecutive application (at the underlined places) allows one to ``move'' $x$ and $y$ past $gh$ and to obtain the identity
$$
g\underline{hx\ms}y
= \underline{gy\ms} h^{-1} y
= x g^{-1} \underline{h^{-1} y\ms}
= x \underline{g^{-1} x\ms} h = xy gh \;.
$$
\end{proof}

In order to apply the idea of the proof of \lemref{lem:s} to superswitching elements it remains to consider the sets $\breve\Sc_{x\to x^{-1}}$. We denote by
$$
\Ic = \{g\in G: g^2=e \}
$$
the \textsf{set of involutions in $G$}. Then
\begin{equation} \label{eq:i}
\breve\Sc_{x\to x^{-1}} = x^{-1}\Ic \qquad \forall\,x\in G \;.
\end{equation}

Now we are ready to prove

\begin{prp} \label{prp:ss}
For any finite subset $Z$ of an ICC group the associated set $\Sf_{\pm 1}(Z)$ of superswitching elements is infinite.
\end{prp}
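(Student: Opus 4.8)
First I would reduce everything to a negligibility statement. By the description \eqref{eq:u} the set $\Sf_{\pm 1}(Z)$ is the complement of
$$
\bigcup_{x,y\in\mr{Z}} \Sc_{x\to y}\;\cup\;\bigcup_{(x,y)\neq(e,e)}\breve\Sc_{x\to y},
$$
and since $Z$ is finite both unions are finite. By \corref{cor:negl} it therefore suffices to show that each of the finitely many sets occurring here is negligible in the sense of \dfnref{dfn:negl}, for then their union is negligible and its complement $\Sf_{\pm 1}(Z)$ is infinite.

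The conjugation sets are handled exactly as in the proof of \lemref{lem:s}: a non-empty $\Sc_{x\to y}$ with $x\in\mr{Z}$ is a coset $g\,\C(x)$ by \eqref{eq:sgxx}, and $\C(x)$ has infinite index because $G$ is ICC. For the sets $\breve\Sc_{x\to y}$ I would split according to the product $xy$. Whenever $xy\neq e$, \lemref{lem:sixy} places any non-empty $\breve\Sc_{x\to y}$ inside a single coset $g\,\C(xy)$ of the centralizer of the non-trivial element $xy$, which again has infinite index; so these sets are negligible as well. This disposes of every pair except those of the form $(x,x^{-1})$ with $x\neq e$, where the product degenerates to $e$ and \lemref{lem:sixy} gives no information.

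The whole difficulty is thus concentrated in the sets $\breve\Sc_{x\to x^{-1}}$, which by \eqref{eq:i} are the translates $x^{-1}\Ic$ of the set $\Ic$ of involutions. As negligibility is preserved under translation and finite unions, the proposition reduces to the single assertion that \emph{$\Ic$ is negligible in any ICC group}. I expect this to be the main obstacle: it is the step where the ICC hypothesis must genuinely be used, and it is what replaces the amenability assumption of Frisch--Hartman--Tamuz--Vahidi Ferdowsi, whose argument produced only mean-zero rather than negligible exceptional sets. In particular it cannot be handled by the centralizer-coset technique used for all the other pieces, since the involutions of $G$ are in general spread over infinitely many cosets of any fixed centralizer (for example, when $\Ic$ happens to be an infinite-index subgroup it meets infinitely many cosets of a typical $\C(a)$).

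To prove that $\Ic$ is negligible I would isolate it as a separate lemma and seek a finite covering of $\Ic$ by cosets of infinite-index subgroups built directly from the ICC structure rather than from individual centralizers. A natural first reduction is to fix $a$ with $a^2\neq e$ (such $a$ exists, since a group of exponent two is abelian and hence not ICC) and to separate the involutions commuting with $a$, which already lie in the single infinite-index coset $\C(a)$, from those that do not; covering the latter, non-commuting involutions is the genuinely new input, and this is where I would expect the quantitative form of Neumann's Lemma (\lemref{lem:negl}) together with the infinitude of the relevant conjugacy classes to enter. Once $\Ic$ is known to be negligible, the set removed in \eqref{eq:u} is a finite union of negligible sets, hence negligible, and \corref{cor:negl} yields that $\Sf_{\pm 1}(Z)$ is infinite, as claimed.
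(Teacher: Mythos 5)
Your treatment of the ``easy'' pieces coincides with the paper's: the sets $\Sc_{x\to y}$ with $x,y\in Z\sm\{e\}$ are single cosets of infinite-index centralizers by \eqref{eq:sgxx}, and the sets $\breve\Sc_{x\to y}$ with $xy\neq e$ are contained in single cosets $g\C(xy)$ by \lemref{lem:sixy}, so all of these are negligible. The gap is in your handling of the remaining sets $\breve\Sc_{x\to x^{-1}}=x^{-1}\Ic$. You reduce the proposition to the assertion that the set of involutions $\Ic$ is negligible in \emph{every} ICC group, and then offer only a plan for proving that assertion. The plan does not work as described: Neumann's Lemma says that the whole group cannot be covered by finitely many cosets of infinite-index subgroups, which is of no help in \emph{producing} such a covering of the non-commuting involutions. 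More seriously, the assertion itself is, as far as the paper is concerned, open: the remark immediately following \prpref{prp:ss} exhibits the infinite dihedral group as an infinite group with non-negligible $\Ic$ and explicitly asks whether such examples exist among ICC groups --- so the authors themselves could not rule out an ICC group with non-negligible $\Ic$, and your argument would break down on any such group.

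The paper circumvents this by a dichotomy on whether $\Ic$ is negligible. If it is, your argument goes through verbatim. If it is not, one uses the observation that for an involution $\si$ the set $\{\si,\si^{-1}\}$ is just $\{\si\}$, so switching and superswitching coincide on $\Ic$; hence $\Sf_{\pm 1}(Z)\cap\Ic=\Sf_1(Z)\cap\Ic=\Ic\sm\bigcup_{x,y\in Z\sm\{e\}}\Sc_{x\to y}$, a non-negligible set minus a negligible one, which is infinite by \lemref{lem:ab}. In that case one gets infinitely many superswitching \emph{involutions} directly, with no need to say anything about the sets $\breve\Sc_{x\to x^{-1}}$. You need to add this second branch (or else actually prove that $\Ic$ is negligible in every ICC group, which would be a new result going beyond what the paper establishes).
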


\begin{proof}
We consider two cases depending on whether the set $\Ic$ of involutions is negligible or not.

(i) If $\Ic$ is negligible, then, in view of formula \eqref{eq:u}, \lemref{lem:sixy} and formula \eqref{eq:i} allow one to use the same argument as in the proof of \lemref{lem:s}.

(ii) If $\Ic$ is not negligible, then we use the fact that
$$
\Sf_{\pm 1}(Z) \cap \Ic = \Sf_1(Z) \cap \Ic \;.
$$
By \eqref{eq:us}
$$
\Sf_1(Z) \cap \Ic = \Ic \sm \bigcup_{x,y\in Z\sm\{e\}} \Sc_{x\to y} \;,
$$
and \lemref{lem:ab} implies that there are infinitely many superswitching involutions.
\end{proof}

\begin{rem}
A simple example of an infinite group with a non-negligible set of involutions~$\Ic$ is provided by the infinite dihedral group $\Dc_\infty$. If one realizes it as the affine group of the integer line, i.e., as the ``$ax+b$ group'' with $a=\pm 1$ and $b\in\ZZ$, then non-trivial involutions are precisely the elements $(-1,b)$, whence in this case $m(\Ic)=1/2$ for any invariant mean on $\Dc_\infty$, so that $\Ic$ is non-negligible (see \remref{rem:dens}). It would be interesting to have more examples of groups with this property, in particular, among non-amenable or ICC groups.
\end{rem}

As an immediate consequence of \prpref{prp:ss} we obtain

\begin{cor} \label{cor:icclad}
For any ICC group $G$, any gauge function $\la$, and any sequence of finite filling sets $\Ab=(A_0,A_1,\dots)$ there exists a sequence of finite symmetric sets $\Sib=(\Si_1,\Si_2,\dots)$
such that the scale $\La=(\la,\Sib,\Ab)$ is a ladder.
\end{cor}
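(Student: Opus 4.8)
The plan is to build the spiking sequence $\Sib=(\Si_1,\Si_2,\dots)$ recursively, one set at a time, arranging at each stage that the switching hypothesis \eqref{eq:5la} of \lemref{lem:5la} holds; the ladder property will then be handed to us for free by that lemma. Suppose that $\Si_1,\dots,\Si_{n-1}$ have already been constructed as finite symmetric subsets of $\mr G$. Since the filling sets $A_0,\dots,A_{n-1}$ are finite by hypothesis, the set $\De_n$ of \eqref{eq:de} is then a finite subset of $G$, and hence so is the ball $Z_n:=\De_n^{5\la(n)}$. This finiteness is the only structural fact needed to continue the recursion, and it propagates precisely because each previously chosen $\Si_i$ is finite.

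The key step is to apply \prpref{prp:ss} to the finite set $Z_n$ in the ICC group $G$: it guarantees that the set $\Sf_{\pm1}(Z_n)$ of $Z_n$-superswitching elements is infinite. I would therefore choose an element $\si_n\in\Sf_{\pm1}(Z_n)$ lying outside the finite set $\bigcup_{i<n}\Si_i$, and set $\Si_n=\{\si_n,\si_n^{-1}\}$. By \dfnref{dfn:ss} the pair $\{\si_n,\si_n^{-1}\}$ is $Z_n$-switching, that is, $\Si_n\in\Sfb(Z_n)=\Sfb\bigl(\De_n^{5\la(n)}\bigr)$, which is exactly condition \eqref{eq:5la} at level $n$. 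By construction $\Si_n$ is finite and symmetric, it is non-empty since $\si_n\neq e$, and it is disjoint from all earlier spiking sets (the explicit avoidance secures this, and in any case \remref{rem:empty} shows a $Z_n$-switching set must be disjoint from $Z_n\supseteq\De_n\supseteq\Si_i$ for every $i<n$). Thus all the requirements of a scale in \dfnref{dfn:scale} are preserved.

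Carrying out this construction for every $n\ge 1$ produces a sequence $\Sib$ of finite symmetric sets with $\Si_n\in\Sfb\bigl(\De_n^{5\la(n)}\bigr)$ for all $n$, so that the scale $\La=(\la,\Sib,\Ab)$ satisfies the hypothesis of \lemref{lem:5la} and is therefore a ladder, which is the desired conclusion.

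I do not expect a genuine obstacle here: the two substantive ingredients — that an arbitrary ICC group supplies infinitely many superswitching elements for any finite set (\prpref{prp:ss}), and that iterated switching forces the ladder axioms (\lemref{lem:5la}) — have already been established, and the corollary is essentially their recursive combination. The only matters demanding care are bookkeeping: ensuring that finiteness of the filling sets together with finiteness of each $\Si_i$ keeps every $Z_n=\De_n^{5\la(n)}$ finite so that \prpref{prp:ss} remains applicable at each stage, and verifying that non-emptiness, symmetry, and pairwise disjointness of the $\Si_n$ are maintained throughout the induction.
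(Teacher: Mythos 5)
Your proposal is correct and follows essentially the same route as the paper: the paper's proof is precisely the recursive construction of symmetric switching sets $\Si_n$ for the finite balls $\De_n^{5\la(n)}$ via \prpref{prp:ss}, with \lemref{lem:5la} then yielding the ladder property. Your additional bookkeeping (disjointness via \remref{rem:empty}, non-emptiness, symmetry of $\{\si_n,\si_n^{-1}\}$) is all accurate and merely makes explicit what the paper leaves implicit.
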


\begin{proof}
By \lemref{lem:5la} and \prpref{prp:ss} the desired sequence $\Sib=(\Si_1,\Si_2,\dots)$ can be constructed recursively, by choosing at each step $n$ a symmetric set $\Si_n$ which is a switcher for the finite set
$$
\De^{5\la(n)} = \left( \bigcup_{i=1}^{n-1} \Si_i^{\pm 1} \; \cup \; \bigcup_{i=0}^{n-1} A_i^{\pm 1} \right)^{5\la(n)} \;.
$$
\end{proof}

By applying \corref{cor:icclad} to the situation when the sets $A_i$ are all symmetric, contain at most 2 elements, and their union is the whole group $G$ we obtain

\begin{cor} \label{cor:icclad2}
For any ICC group and any gauge function $\la$ there is a ladder $\La=(\la,\Sib,\Ab)$ such that the sets $\Si_i,A_i$ are all symmetric, contain at most 2 elements each, and their union is the whole group $G$.
\end{cor}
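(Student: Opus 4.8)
The plan is to derive this from \corref{cor:icclad} by choosing the filling sequence $\Ab$ so that it already covers $G$ with symmetric two-element sets, and by noting that the switcher sets produced in the proof of \corref{cor:icclad} can be taken to be symmetric pairs.

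First I would use the countability of $G$ to enumerate it as $G=\{h_0,h_1,h_2,\dots\}$ with $h_0=e$, and set
$$
A_0=\{e\}\;,\qquad A_i=\{h_i,h_i^{-1}\}\quad(i\ge 1)\;.
$$
Each $A_i$ is symmetric and contains at most two elements, the set $A_0$ contains the group identity as demanded by \eqref{eq:a0e}, and $\bigcup_{i\ge 0}A_i=G$. Consequently the union of all spiking and filling sets equals $G$ regardless of how the $\Si_i$ are selected, so both the covering requirement of the statement and the generation assumption \eqref{eq:asi} hold automatically.

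Next I would run the recursive construction underlying \corref{cor:icclad}, keeping track of cardinalities. At stage $n$ the set $\De_n^{5\la(n)}$ is finite because all previously chosen $\Si_i$ and all $A_i$ with $i<n$ are finite; hence by \prpref{prp:ss} there is a $\De_n^{5\la(n)}$-superswitching element $\si_n$. By \dfnref{dfn:ss} the symmetric pair
$$
\Si_n=\{\si_n,\si_n^{-1}\}
$$
lies in $\Sfb\bigl(\De_n^{5\la(n)}\bigr)$; it is nonempty (as $\si_n\ne e$), symmetric, and has at most two elements. Moreover \eqref{eq:side} gives $\Si_n\cap\De_n^{5\la(n)}=\vn$, and since $\De_n$ contains $\Si_1^{\pm1},\dots,\Si_{n-1}^{\pm1}$ the spiking sets are pairwise disjoint, so $(\la,\Sib,\Ab)$ is a genuine scale.

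Finally, condition \eqref{eq:5la} holds by the very choice of the $\Si_n$, so \lemref{lem:5la} certifies that $\La=(\la,\Sib,\Ab)$ is a ladder with all the required properties. I do not anticipate a genuine obstacle: the one point deserving attention is that \prpref{prp:ss} delivers switcher sets that are simultaneously symmetric and of size at most two, which is exactly the content of a superswitching element, while the filling sets $A_i$ enter the construction only through the finite sets $\De_n^{5\la(n)}$ that $\si_n$ is required to switch and therefore do not interfere with the switcher condition.
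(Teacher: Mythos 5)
Your proof is correct and follows essentially the same route as the paper: the paper obtains \corref{cor:icclad2} by applying \corref{cor:icclad} with filling sets $A_i$ chosen exactly as you do, and the bound of two elements on each $\Si_n$ comes, just as in your argument, from unfolding the recursive construction in the proof of \corref{cor:icclad} and taking $\Si_n=\{\si_n,\si_n^{-1}\}$ for a superswitching element supplied by \prpref{prp:ss}. Your write-up merely makes explicit the disjointness and non-emptiness checks that the paper leaves implicit.
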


\section{Stabilizers of the boundary action} \label{sec:final}

\subsection{Action on the Poisson boundary} \label{sec:act}

Let $H\ns G$ be a normal subgroup of a countable group $G$, and let $\ov G=G/H$ be the corresponding quotient group. Given a probability measure $\mu$ on~$G$, we shall denote by $\ov\mu$ the probability measure on $\ov G$ which is the image of the measure~$\mu$ under the quotient map $G\to\ov G$. The lift of any $\ov\mu$-harmonic function from $\ov G$ to $G$ is obviously $\mu$-harmonic and (left) $H$-invariant, and, conversely, any $H$-invariant $\mu$-harmonic function on $G$ corresponds to a $\ov\mu$-harmonic function on $\ov G$. In view of the Poisson formula \eqref{eq:poisson}, this correspondence implies

\begin{prp}[{\cite[Theorem 2.1.4]{Kaimanovich95}}] \label{prp:h}
Given a random walk $(G,\mu)$ and a normal subgroup $H\ns G$, the Poisson boundary $\p_{\ov\mu}\ov G$ of the quotient random walk $\bigl(\ov G,\ov\mu\bigr)$ is the space of ergodic components of the action of the normal subgroup $H$ on the Poisson boundary $\p_\mu G$ of the original random walk $(G,\mu)$, and, moreover, the harmonic measure $\ov\nu$ on $\p_{\ov\mu}\ov G$ is the image of the harmonic measure $\nu$ on $\p_\mu G$ under the quotient map $\p_\mu G \to \p_{\ov\mu}\ov G$.
\end{prp}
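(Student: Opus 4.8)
The plan is to upgrade the functional correspondence recorded just above into an identification of measure spaces, using the duality between Lebesgue spaces and their $L^\infty$-algebras, and then to pin down the harmonic measure by a computation on path spaces. Write $\rho:G\to\ov G$ for the quotient homomorphism. First I would make the correspondence precise: the lift $f=\ov f\circ\rho$ carries $H^\infty(\ov G,\ov\mu)$ isometrically onto the subspace $H^\infty(G,\mu)^H$ of left-$H$-invariant bounded $\mu$-harmonic functions. Indeed, by \eqref{eq:p} one has $Pf(g)=\sum_h\mu(h)\ov f(\rho(g)\rho(h))=\ov P\ov f(\rho(g))=\ov f(\rho(g))=f(g)$, where $\ov P$ is the Markov operator of $(\ov G,\ov\mu)$ and the third equality is the $\ov\mu$-harmonicity of $\ov f$, while $f(hg)=\ov f(\rho(g))=f(g)$ for $h\in H$ because $\rho(h)=\ov e$; conversely a left-$H$-invariant harmonic function is constant on the cosets $Hg$ and therefore descends to an $\ov\mu$-harmonic function on $\ov G=H\backslash G$.

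Next I would feed this through the Poisson formula \eqref{eq:poisson}, which provides a $G$-equivariant isometric isomorphism $H^\infty(G,\mu)\cong L^\infty(\p_\mu G,\nnu)$, the group acting on the target through the boundary action; under it, left-$H$-invariance of a harmonic function corresponds (mod 0) to $H$-invariance of its boundary value. Hence the first paragraph yields $L^\infty(\p_{\ov\mu}\ov G,\ov\nnu)\cong L^\infty(\p_\mu G,\nnu)^H$. By definition the right-hand algebra is the $L^\infty$-algebra of the space $B$ of ergodic components of the $H$-action on $(\p_\mu G,\nnu)$, and since $H\ns G$ this space carries a residual $\ov G=G/H$-action for which the two identifications are equivariant. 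The standard correspondence between Lebesgue measure spaces and their function algebras then turns this $\ast$-isomorphism into a measure-class-preserving, $\ov G$-equivariant isomorphism $B\cong\p_{\ov\mu}\ov G$.

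To identify the harmonic measures I would pass to the path spaces. The entrywise map $\rho_*:G^{\ZZ_+}\to\ov G^{\ZZ_+}$ sends the $\mu$-distributed increments to $\ov\mu$-distributed ones, so $\rho_*\Pb=\ov\Pb$, and it commutes with the time shift \eqref{eq:shift}. Consequently $\bnd_{\ov G}\circ\rho_*$ is shift-invariant and factors through $\bnd$ as $q\circ\bnd=\bnd_{\ov G}\circ\rho_*$ for a map $q:\p_\mu G\to\p_{\ov\mu}\ov G$ defined mod 0. Since $\rho_*$ is constant on $H$-orbits, $q$ factors through $B$, and by the identification of the previous paragraph it coincides with the quotient onto $B$. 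Pushing $\Pb$ forward then gives $q_*\nu=q_*\bnd_*\Pb=(\bnd_{\ov G})_*\rho_*\Pb=(\bnd_{\ov G})_*\ov\Pb=\ov\nu$, which is the asserted compatibility of measures.

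I expect the main obstacle to be the middle passage from the abstract isomorphism of $L^\infty$-algebras to an honest isomorphism of the underlying measure spaces $B$ and $\p_{\ov\mu}\ov G$. This is where one must use that both spaces are standard, so that a measure-class-preserving $\ast$-isomorphism of their algebras is implemented by a point map defined mod 0, and where one has to verify that the measurable partition into $H$-ergodic components is genuinely well-defined, with $L^\infty$-algebra equal to $L^\infty(\p_\mu G,\nnu)^H$. Keeping all the identifications simultaneously $\ov G$-equivariant throughout is the remaining point requiring care; the harmonicity identities and the functorial pushforwards are routine.
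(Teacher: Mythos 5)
Your proposal is correct and follows essentially the same route as the paper, which derives the statement from exactly this correspondence between $H$-invariant bounded $\mu$-harmonic functions on $G$ and bounded $\ov\mu$-harmonic functions on $\ov G$ combined with the Poisson formula \eqref{eq:poisson} (the paper otherwise defers to the cited reference). Your additional care about the point realization of the $L^\infty$-isomorphism and the path-space identification of $\ov\nu$ fills in details the paper leaves implicit, but introduces no new idea or different decomposition.
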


\begin{dfn}
The \textsf{pointwise stabilizer of the Poisson boundary}
\begin{equation} \label{eq:stab}
\Stab \p_\mu G = \bigl\{g\in G: g\ze=\ze \;\text{for}\;\nnu\text{-a.e.}\; \ze\in\p_\mu G \bigr\}
\end{equation}
consists of all elements of the group $G$ which act trivially on $\p_\mu G$. It is a normal subgroup of $G$, and, in view of the Poisson formula, it coincides with the \textsf{group of (left) periods of bounded $\mu$-harmonic functions} on $G$ (or, just the \textsf{group of (left) $\mu$-periods}, in short), i.e.,
$$
\Stab \p_\mu G = \bigl\{g\in G: f(gx)=f(x) \quad \forall f\in H^\infty(G,\mu),\; x\in G \bigr\} \;.
$$
\end{dfn}

The quotient map $\p_\mu G \to \p_{\ov\mu}\ov G$ from the Poisson boundary of the original random walk to the Poisson boundary of the quotient random walk on $\ov G=G/\Stab \p_\mu G$ is then an isomorphism by \prpref{prp:h}.

\begin{rem} \label{rem:ss}
If $\sgr\mu\,(\sgr\mu)^{-1}\neq G$, then any $g\in G$ with $\sgr\mu\cap g\sgr\mu=\vn$ has the property that the harmonic measure $\nu$ and its translate $g\nu$ are mutually singular (e.g., see \cite[Lemma 2.9]{Kaimanovich92}), and therefore the action of $g$ on the Poisson boundary $\p_\mu G$ is non-trivial. Thus,
$$
\Stab\p_\mu G \subset \sgr\mu\,(\sgr\mu)^{-1} \;.
$$
\end{rem}

\begin{rem}
The following dichotomy holds for any ergodic measure class preserving action of a countable group $G$ on a Lebesgue measure space $(X,m)$: either
\begin{itemize}
\item[(i)]
there is a normal subgroup $H\ns G$ such that $\Stab x = H$ for almost every point $x\in X$; or
\item[(ii)]
for almost every point $x\in X$ the stabilizer $\Stab x\subset G$ is not a normal subgroup.
\end{itemize}
Indeed, it is not difficult to see that the map $x\mapsto\Stab x$ is measurable and equivariant (see Vershik \cite[the remark after Definition 2]{Vershik12}), and it is constant along the orbit $Gx$ of a point $x$ if and only if $\Stab x$ is normal. Therefore, \prpref{prp:h} implies that given a random walk $(G,\mu)$ either
\begin{itemize}
\item[(i)]
the pointwise stabilizer $H=\Stab \p_\mu G$ has the property that the action of the associated quotient group $\ov G=G/H$ on the Poisson boundary $\p_{\ov\mu}\ov G\cong\p_\mu G$ of the quotient random walk $\bigl(\ov G,\ov\mu\bigr)$ is \textup{(mod 0)} free; or
\item[(ii)]
for almost every point of the Poisson boundary $\p_\mu G$ its stabilizer is not a normal subgroup.
\end{itemize}
Examples of type (ii) random walks are readily provided by continuous groups (for instance, the ``$ax+b$'' group). However, at the moment we are not aware of any type (ii) random walks on countable groups.
\end{rem}

\subsection{Hyper-FC-centre} \label{sec:icc}

We have to remind the definitions of the FC-centre and of the hyper-FC-centre of a group (for more background see, for instance, Robinson \cite[Section 4.3]{Robinson72} and the references therein).

An element $g$ of a group $G$ is central if and only if its conjugacy class $\Cc_g$ \eqref{eq:conj} consists just of $g$ itself. An \textsf{FC-element} is $g\in G$ such that its conjugacy class is finite, or, equivalently, its \textsf{centralizer}
$$
\C(g) = \{h\in G: h^{-1}gh=g \} = \{h\in G: gh=hg \}
$$
has a finite index in $G$. The collection $\FC(G)$ of all FC-elements of a group $G$, i.e., the union of all finite conjugacy classes, is called the \textsf{FC-centre} of $G$; it is a normal (and even characteristic) subgroup of $G$. A group $G$ is said to be a \textsf{group with finite conjugacy classes} (or, an \textsf{FC group} in short) if $\FC(G)=G$, i.e., if the conjugacy class of any element is finite. The ICC groups (see the beginning of \secref{sec:sicc}) can be described in these terms as the ones for which $\FC(G)=\{e\}$.

The series
$$
G=G_0 \to G_1 \to \dots \to G_n \to G_{n+1} \to \dots \;,
$$
where each consecutive group $G_{n+1}=G_n/\FC(G_n)$ is the quotient of the preceding one $G_n$ by its FC-centre $\FC(G_n)$, gives rise to the \textsf{upper FC-central series} of the group $G$
\begin{equation} \label{eq:fcseries}
\{e\} = \FC_0(G) \ns \FC_1(G) \ns \dots \ns \FC_n(G) \ns \FC_{n+1}(G) \ns \dots,
\end{equation}
which consists of the kernels $\FC_n(G)$ of the quotient homomorphisms $G\to G_n$, so that
\begin{equation} \label{eq:fc}
G/\FC_{n+1}(G) = \Bigl[ G/\FC_n(G) \Bigr] \; \Big/ \, \FC \bigl[ G/\FC_n(G) \bigr] \;.
\end{equation}

The series \eqref{eq:fcseries} extends transfinitely in the usual way: for a successor ordinal $\al+1$ the corresponding entry $\FC_{\al+1}(G)$ is obtained from $\FC_\al(G)$ by using \eqref{eq:fc}, whereas for a limit ordinal $\al$ one puts
\begin{equation} \label{eq:albe}
\FC_\al(G) = \bigcup_{\be <\al} \FC_\be(G) \;.
\end{equation}
The transfinite upper FC-central series eventually stabilizes, the limit
$$
\FC_{\lim}(G) = \bigcup_\al \FC_\al(G) \;,
$$
where the union is taken over all ordinals $\al$, is called the \textsf{hyper-FC-centre} of the group~$G$, and its elements are called \textsf{hyper-FC-central}. In other words, $\FC_{\lim}(G)$ is the minimal among the normal subgroups $H$ of $G$ with the property that the FC-centre of the quotient $G/H$ is trivial, i.e., that $G/H$ is an ICC group. If $\FC_{\lim}(G)=G$, then the group $G$ is called
\textsf{hyper-FC-central}. A finitely generated hyper-FC-central group is virtually nilpotent, i.e., it contains a nilpotent subgroup of finite index, and its upper FC-central series is finite.

\subsection{Boundary action of the hyper-FC-centre}

The centre $\C(G)$ is contained in the group of $\mu$-periods $\Stab\p_\mu G$ for any non-degenerate probability measure $\mu$ on a countable group $G$ (i.e., such that $\sgr\mu=G$, see \secref{sec:rw}). This observation was apparently first made by Dynkin~-- Malyutov \cite[Lemma 1]{Dynkin-Malutov61} (their argument is actually quite close to the one earlier used by Heilbronn \cite[Theorem~5]{Heilbronn49} to prove the absence of non-constant bounded harmonic functions for the simple random walk on $\ZZ^d$). They applied it as the induction step in the proof that the Poisson boundary for the non-degenerate random walks on countable nilpotent groups is trivial \cite[Theorem~1]{Dynkin-Malutov61}, or, in a somewhat more general formulation, that any entry of the upper central series of the group $G$ is contained in $\Stab\p_\mu G$ under the same non-degeneracy condition on $\mu$.

Similar results were later obtained by a number of authors: Margulis \cite{Margulis66}, Furstenberg \cite[Theorem~11.2]{Furstenberg73},  Lyons~-- Sullivan \cite[Section 4]{Lyons-Sullivan84}, Lin \cite[Lemma 3.3 and Theorems~3.4,~3.9]{Lin87}, Jaworski \cite[Theorem 3.9]{Jaworski94}, Kaimanovich \cite[Theorems 4.1.4 and 5.2.7]{Kaimanovich95}. In some of them the induction was made transfinite to imply that the $\om$-centre \cite{Lyons-Sullivan84} or the hypercentre \cite{Lin87, Kaimanovich95} of the group $G$ is contained in the group of periods. Actually, \cite[Theorems 4.1.5]{Kaimanovich95} contains a somewhat stronger result, in which the hypercentre is replaced with the minimal normal subgroup $H\ns G$ with the property that the quotient $G/H$ is centreless (this is a description of the hypercentre of $G$) \emph{and} has no finite normal subgroups. The class of groups arising in this way is intermediate between the classes of hypercentral and hyper-FC-central groups (for instance, the infinite dihedral group is FC-central, but it is centreless and has no finite normal subgroups).

Lin -- Zaidenberg \cite[Lemma 2.4]{Lin-Zaidenberg98} noticed that, in the setup of bounded harmonic or holomorphic functions on covering manifolds, the induction step can be done for the FC-centre $\FC(G)$ rather than just for the usual centre $\C(G)$ of the deck transformations group $G$. It then implies, through transfinite induction, that the group of periods of these functions contains the whole hyper-FC-centre $\FC_{\lim}(G)$ \cite[Corollary 2.5]{Lin-Zaidenberg98}.

The periods of harmonic functions of random walks on topological (especially, Lie) groups were studied in great detail in the 60s-70s in the works of Furstenberg \cite{Furstenberg63}, Azencott \cite{Azencott70}, Guivarc'h \cite{Guivarch73} and Raugi \cite{Raugi77}. However, possible specializations to countable groups were usually left out, as, for instance, the following result, which is essentially contained in the work of Azencott:

\begin{lem}[Jaworski {\cite[Lemma 4.7]{Jaworski04}}] \label{lem:fc}
If $\mu$ is a probability measure on a countable group $G$ such that
\begin{equation} \label{eq:ss}
\sgr\mu\,(\sgr\mu)^{-1}=G
\end{equation}
(in particular, if the measure $\mu$ is non-degenerate), then
$$
\FC(G) \subset \Stab\p_\mu G \;.
$$
\end{lem}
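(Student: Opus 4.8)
The plan is to use the probabilistic (period) description of $\Stab\Ps$ recorded right after \eqref{eq:stab}: it suffices to show that $f(gx)=f(x)$ for every $f\in H^\infty(G,\mu)$, every $x\in G$, and every $g\in\FC(G)$. Since $\FC(G)$ is the union of its finite conjugacy classes, I may fix a single $g$ whose conjugacy class $\Cc_g$ is finite. The basic mechanism is to run the walks from $x$ and from $gx$ on the \emph{same} increments $(x_n)$: writing $y_n=xx_1\cdots x_n$ one has $gy_n=y_n c_n$ with $c_n=y_n^{-1}gy_n\in\Cc_g$. By the martingale convergence theorem (see \eqref{eq:limit}) $f(x)=\Eb\bigl[\lim_n f(y_n)\bigr]$ while $f(gx)=\Eb\bigl[\lim_n f(y_n c_n)\bigr]$, so everything reduces to showing that the bounded right perturbation by the finite-conjugacy-class elements $c_n$ does not alter the limiting boundary value.

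Next I would reduce to the central case. As $\Cc_g$ is finite, the centralizer $H=\C(g)$ has finite index in $G$ and $g$ lies in the centre of $H$. Passing to the random walk $(H,\mu_H)$ induced by the successive returns of $(y_n)$ to $H$ (which occur infinitely often because $[G:H]<\infty$), its Poisson boundary is $G$-equivariantly isomorphic to $\Ps$, the isomorphism intertwining the $H$- and $G$-actions. Hence it is enough to prove that the \emph{central} element $g$ acts trivially on this common boundary, i.e.\ that $g\in\Stab\p_{\mu_H}H$. One also checks that the hypothesis $\sgr\mu\,(\sgr\mu)^{-1}=G$ descends to $\sgr{\mu_H}\,(\sgr{\mu_H})^{-1}=H$, so the standing assumption is preserved.

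The heart of the matter is therefore the central case. For $g$ central in $H$, right translation by $g$ preserves harmonicity, because $f(whg)=f(wgh)$ for $h\in\supp\mu_H$; consequently $g$ acts on the boundary $\bigl(\p_{\mu_H}H,\nu\bigr)$ preserving its measure class, and the translated measure $g\nu$ is again $\mu_H$-stationary and lies in the harmonic measure class. I would first show $g\nu=\nu$ by arguing that the Radon--Nikodym density $dg\nu/d\nu$ is (essentially) a bounded $\mu_H$-harmonic function on the boundary and hence constant, equal to $1$; here $\sgr{\mu_H}(\sgr{\mu_H})^{-1}=H$ is precisely what keeps $g\nu$ \emph{equivalent} to $\nu$ rather than singular to it (by the criterion of \remref{rem:ss}, its failure would force $g\nu\perp\nu$). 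Once $gx\nu=x\nu$ holds for all $x$, the injectivity of the Poisson transform (the isomorphism $H^\infty\cong L^\infty(\p,\nu)$) yields $\wh f\circ g=\wh f$ for every boundary function $\wh f$, i.e.\ $g$ fixes $\nu$-almost every boundary point, so $g\in\Stab$.

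I expect the central trivialization to be the main obstacle: upgrading ``$g$ preserves the measure class'' to ``$g\nu=\nu$ and $g$ fixes almost every point'' is exactly where the full strength of $\sgr\mu\,(\sgr\mu)^{-1}=G$ enters, through the absolute-continuity dichotomy of \remref{rem:ss}. The finite-index reduction (preservation of the Poisson boundary under inducing, and the descent of the hypothesis to $H$) is standard but must be carried out carefully; the remaining coupling and martingale steps are routine.
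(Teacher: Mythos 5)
The paper does not actually reprove this lemma: it is quoted from Jaworski, and the authors only record his route --- one first shows $\FC(G)\subset\Stab\nu=\{g:g\nu=\nu\}$ (the group of \emph{right} $\mu$-periods) via Azencott's Th\'eor\`eme~IV.1 or a $0$--$2$ law, and then passes to the \emph{left} periods, i.e.\ to $\Stab\Ps$. Your architecture --- couple the walks from $x$ and $gx$ via $gy_n=y_nc_n$ with $c_n\in\Cc_g$, reduce to a central element of the finite-index centralizer $H=\C(g)$ through the induced walk, then kill the central element --- is a legitimate classical alternative (essentially the Dynkin--Malyutov/Lin--Zaidenberg route), and the reduction steps are sound modulo routine verification: the induced walk on $H$ has Poisson boundary $H$-equivariantly (not $G$-equivariantly, but that suffices) isomorphic to $\Ps$, one has $\sgr\mu_H=\sgr\mu\cap H$, and condition \eqref{eq:ss} does descend to $H$ (write $h=cd^{-1}$ with $c,d\in\sgr\mu$ and right-multiply both by some $t\in\sgr\mu\cup\{e\}$ with $dt\in H$, which exists by pigeonhole on the cosets $Hd^k$).

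The genuine gap is exactly where you flag ``the main obstacle'': the central case is asserted, not proved. The claim that $dg\nu/d\nu$ is ``a bounded $\mu_H$-harmonic function on the boundary and hence constant'' does not stand as written: this density is a function on the \emph{boundary}, there is no reason for it to be bounded, and there is no Liouville principle available there --- the whole point of the present paper is that the bounded harmonic objects attached to $(G,\mu)$ are typically \emph{not} constant. Likewise \remref{rem:ss} only yields that $g\nu$ is \emph{not singular} to $\nu$ when $\sgr\mu\cap g\sgr\mu\neq\vn$, which is very far from $g\nu=\nu$. What is needed here is precisely the content of Azencott's theorem / Jaworski's $0$--$2$ law: for a central $g\in\sgr\mu\,(\sgr\mu)^{-1}$ one must actually prove $\|\mu^{*n}-\mu^{*n}*\de_g\|\to 0$, e.g.\ by the Heilbronn--Dynkin--Malyutov argument (if $c=\sup_x\bigl(f(xg)-f(x)\bigr)>0$ and $\mu^{*n}(g)>0$, then $c$ is nearly attained along the whole orbit $xg^{j}$, contradicting the boundedness of the telescoping sum $f(xg^{k})-f(x)$), plus additional work to pass from $g\in\sgr\mu$ to $g\in\sgr\mu\,(\sgr\mu)^{-1}$. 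Once that step is supplied, your final passage from $gx\nu=x\nu$ for all $x$ to pointwise fixing does go through, e.g.\ via \prpref{prp:h} applied to the normal closure of $g$ in $H$; but as it stands the proof is incomplete at its crux.
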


Jaworski deduced it from Azencott's \cite[Th\'eor\`eme IV.1]{Azencott70} or from his own 0-2 law \cite[Proposition 3.1]{Jaworski94} which imply that under condition \eqref{eq:ss} $\FC(G)$ is contained in the group of the \emph{right} $\mu$-periods
$$
\Stab\nu = \{g\in G: g\nu=\nu \} \;.
$$
After that he used the same argument as in \cite[Lemme~IV.7]{Azencott70} in order to pass to the group of \emph{left} $\mu$-periods, i.e., to the pointwise stabilizer  $\Stab\p_\mu G$ of the Poisson boundary.

A standard transfinite induction (almost \emph{verbatim} following the proof of \cite[Corollary 2.5]{Lin-Zaidenberg98}) then turns \lemref{lem:fc} into

\begin{prp} \label{prp:fc}
If $\mu$ is a probability measure on a countable group $G$ which satisfies the condition
$$
\sgr\mu\,(\sgr\mu)^{-1}=G
$$
(in particular, if the measure $\mu$ is non-degenerate), then
$$
\FC_{\lim}(G) \subset \Stab\p_\mu G \;,
$$
i.e., the action of the hyper-FC-centre on the Poisson boundary $\p_\mu G$ is trivial.
\end{prp}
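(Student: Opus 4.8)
The plan is to prove, by transfinite induction on the ordinal $\al$, that every entry $\FC_\al(G)$ of the upper FC-central series \eqref{eq:fcseries} is contained in the group of $\mu$-periods $\Stab\p_\mu G$; since the series stabilizes at $\FC_{\lim}(G)$, the proposition follows immediately. The base case $\FC_0(G)=\{e\}$ is trivial. The limit case is also immediate: for a limit ordinal $\al$ one has $\FC_\al(G)=\bigcup_{\be<\al}\FC_\be(G)$ by \eqref{eq:albe}, and an increasing union of subgroups each contained in the subgroup $\Stab\p_\mu G$ is again contained in it. All the content therefore lies in the successor step.

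So assume $H:=\FC_\al(G)\subset\Stab\p_\mu G$, and set $\ov G=G/H$ with quotient homomorphism $\pi:G\to\ov G$ and $\ov\mu$ the image of $\mu$. First I would check that the hypothesis descends to the quotient: since $\pi$ is a homomorphism, $\sgr\ov\mu=\pi(\sgr\mu)$, whence $\sgr\ov\mu\,(\sgr\ov\mu)^{-1}=\pi\bigl(\sgr\mu\,(\sgr\mu)^{-1}\bigr)=\pi(G)=\ov G$, so the random walk $(\ov G,\ov\mu)$ again satisfies condition \eqref{eq:ss}. Next, because $H$ acts trivially on $\p_\mu G$ by the inductive hypothesis, every measurable subset is $H$-invariant, so the space of ergodic components of the $H$-action is (mod 0) all of $\p_\mu G$; hence \prpref{prp:h} identifies the Poisson boundary $\p_{\ov\mu}\ov G$ of the quotient walk with $\p_\mu G$ itself.

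Under this identification the $G$-action on $\p_\mu G$ factors through $\pi$ (as $H$ fixes every point mod 0) and coincides with the $\ov G$-action on $\p_{\ov\mu}\ov G$, so that $g\in\Stab\p_\mu G$ if and only if $\pi(g)\in\Stab\p_{\ov\mu}\ov G$, i.e.\ $\Stab\p_\mu G=\pi^{-1}\bigl(\Stab\p_{\ov\mu}\ov G\bigr)$. Applying \lemref{lem:fc} to $(\ov G,\ov\mu)$ gives $\FC(\ov G)\subset\Stab\p_{\ov\mu}\ov G$, while the defining relation \eqref{eq:fc} yields $\FC_{\al+1}(G)=\pi^{-1}\bigl(\FC(\ov G)\bigr)$. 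Combining these,
\[
\FC_{\al+1}(G)=\pi^{-1}\bigl(\FC(\ov G)\bigr)\subset\pi^{-1}\bigl(\Stab\p_{\ov\mu}\ov G\bigr)=\Stab\p_\mu G \;,
\]
which completes the successor step and hence the induction.

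The main obstacle is the successor step, and within it the point needing the most care is the identification $\Stab\p_\mu G=\pi^{-1}\bigl(\Stab\p_{\ov\mu}\ov G\bigr)$. It rests on the observation that once $H$ acts trivially the $G$-action genuinely factors through $\ov G$, so that \prpref{prp:h} delivers not merely a measurable quotient but an isomorphism of boundaries intertwining the $G$- and $\ov G$-actions; everything else is bookkeeping with the transfinite series and the already-verified stability of condition \eqref{eq:ss} under passage to quotients.
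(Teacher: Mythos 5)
Your proof is correct and follows essentially the same route as the paper: a transfinite induction whose successor step applies \lemref{lem:fc} to the quotient random walk $(\ov G,\ov\mu)$, uses \prpref{prp:h} to identify $\p_{\ov\mu}\ov G$ with $\p_\mu G$ once the inductive hypothesis makes the $H$-action trivial, and notes that condition \eqref{eq:ss} passes to quotients. The only cosmetic difference is that you phrase the conclusion via $\Stab\p_\mu G=\pi^{-1}\bigl(\Stab\p_{\ov\mu}\ov G\bigr)$ while the paper phrases it as a composition of boundary isomorphisms.
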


\begin{proof}
Let $G_\al=G/\FC_\al(G)$ and $G_{\al+1}=G/\FC_{\al+1}(G)$ be the quotient groups of $G$ determined by the entries of the upper FC-central series indexed by an ordinal $\al$ and its successor $\al+1$, respectively. Then the homomorphisms
$$
G \to G_\al \to G_{\al+1}
$$
give rise to the corresponding quotient maps between the Poisson boundaries
$$
\p_\mu G \to \p_{\mu_\al} G_\al \to \p_{\mu_{\al+1}} G_{\al+1} \;,
$$
where $\mu_\al$ and $\mu_{\al+1}$ are the respective quotient measures. If $\FC_\al(G)\subset \Stab\p_\mu G$, then the map $\p_\mu G \to \p_{\mu_\al} G_\al$ is an isomorphism, whereas the second map $\p_{\mu_\al} G_\al \to \p_{\mu_{\al+1}} G_{\al+1}$ is an isomorphism by \lemref{lem:fc} applied to
the random walk $(G_\al,\mu_\al)$ (because condition \eqref{eq:ss} is preserved when passing to quotient random walks). Therefore, the composition $\p_\mu G \to \p_{\mu_{\al+1}} G_{\al+1}$ is also an isomorphism, which means that $\FC_{\al+1}(G)\subset \Stab\p_\mu G$.

On the other hand, if $\al$ is a limit ordinal such that $\FC_\be(G)\subset \Stab\p_\mu G$ for all $\be<\al$ then by \eqref{eq:albe} $\FC_\al(G)$ is also contained in $\Stab\p_\mu G$.
\end{proof}

\begin{rem} \label{rem:hss}
If $G$ is a hyper-FC-central group, i.e., $G=\FC_{\lim}(G)$, then $SS^{-1}=\gr(S)$ for any subsemigroup $S\subset G$. For instance, it follows from the fact that $SS^{-1}=\gr(S)$ for any subsemigroup in a group of subexponential growth (see \cite[Theorem~2.1.15]{Kaimanovich85t} or \cite[Theorem 5.6]{Jaworski94a}), whereas any finitely generated subgroup of a hyper-FC-central group is virtually nilpotent, hence of polynomial growth. Therefore, if $\mu$ is an irreducible probability measure on a hyper-FC-central group $G$, i.e., if $\gr(\mu)=G$, then condition \eqref{eq:ss} is automatically satisfied, so that \prpref{prp:fc} implies the theorem of Jaworski \cite[Theorem 4.8]{Jaworski04} on the triviality of the Poisson boundary $\p_\mu G$ for any irreducible probability measure $\mu$ on a hyper-FC-central group $G$.
\end{rem}

\subsection{Examples of non-trivial boundary action of the centre} \label{sec:ex}

If condition \eqref{eq:ss} is not satisfied, then the action of any $g\in G\sm\sgr\mu\,(\sgr\mu)^{-1}$ on the Poisson boundary $\p_\mu G$ is non-trivial (see \remref{rem:ss}). On the other hand,
$$
SS^{-1}\cap H=\{e\}
$$
for a subsemigroup $S\subset G$ and a normal subgroup $H\ns G$ if and only if the restriction of the quotient homomorphism $\pi:G\to G/H$ to $S$ is a bijection. Thus, we obtain

\begin{lem} \label{lem:semifree}
If a group $G$ is such that the centre $\C(G)$ is non-trivial, and there is a finite generating set $K$ with the property that the image of the semigroup $S=\sgr(K)$ under the quotient homomorphism $\pi:G\to G/C(G)$ is the free semigroup with the free generating set $\pi(K)$, then any probability measure $\mu$ with $\supp\mu=K$ (more generally, any $\mu$ with $\sgr\mu=S$) is irreducible and has the property that any non-identity element of $\C(G)$ acts non-trivially on the Poisson boundary $\p_\mu G$.
\end{lem}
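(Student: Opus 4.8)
The plan is to derive the whole statement from the containment
$\Stab\p_\mu G\subset\sgr\mu\,(\sgr\mu)^{-1}$ recorded in \remref{rem:ss},
together with the elementary remark made just above the lemma that, for a normal
subgroup $H\ns G$, one has $SS^{-1}\cap H=\{e\}$ exactly when the quotient
homomorphism $\pi\colon G\to G/H$ is injective on the subsemigroup $S$.
Irreducibility is immediate and requires no work: since $\supp\mu=K$ we have
$\sgr\mu=\sgr(K)=S$, and as $K$ generates $G$ the subgroup $\gr\mu=\gr(K)$ is all
of $G$; in the more general case $\sgr\mu=S\supset K$ the same conclusion
$\gr\mu=G$ holds.

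For the substantive claim I would take $H=\C(G)$, which is normal, write
$\pi\colon G\to G/\C(G)$ for the quotient map, and note $\ker\pi=\C(G)$. The
goal is to show that every $g\in\C(G)\sm\{e\}$ satisfies $g\notin\Stab\p_\mu G$.
By \remref{rem:ss} it suffices to check that $g\notin\sgr\mu\,(\sgr\mu)^{-1}=SS^{-1}$,
that is, that $SS^{-1}\cap\C(G)=\{e\}$. So suppose $s(s')^{-1}\in\C(G)$ with
$s,s'\in S$; applying $\pi$ and using $\C(G)=\ker\pi$ yields $\pi(s)=\pi(s')$,
and the entire matter thus reduces to the injectivity of $\pi$ on $S$.

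The one genuinely substantive step is to extract this injectivity from the
freeness hypothesis, and this is where I expect the only care to be needed.
Writing $s=k_1\cdots k_m$ and $s'=k'_1\cdots k'_n$ as products of elements of
$K$, the equality $\pi(s)=\pi(s')$ becomes an identity
$\pi(k_1)\cdots\pi(k_m)=\pi(k'_1)\cdots\pi(k'_n)$ inside $\pi(S)$. Since $\pi(S)$
is the \emph{free} semigroup on the set $\pi(K)$, unique factorization forces
$m=n$ and $\pi(k_i)=\pi(k'_i)$ for every $i$; because a free generating set
consists of distinct elements, $\pi$ is injective on $K$, whence $k_i=k'_i$ and
therefore $s=s'$. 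Consequently $\pi|_S$ is injective, $SS^{-1}\cap\C(G)=\{e\}$,
and every non-identity central element lies outside $\Stab\p_\mu G$ and so acts
non-trivially on $\p_\mu G$. Since the analytic content (the mutual singularity
of $\nu$ and its translate $g\nu$) is already packaged in \remref{rem:ss}, the
only point demanding attention is the passage from freeness of $\pi(S)$ to
injectivity of $\pi$ on $K$, and hence on $S$.
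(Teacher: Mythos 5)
Your proof is correct and follows essentially the same route as the paper, which derives the lemma directly from \remref{rem:ss} together with the observation (stated just before the lemma) that $SS^{-1}\cap H=\{e\}$ if and only if the quotient map $\pi$ is injective on $S$. The only step the paper leaves implicit --- passing from the freeness of $\pi(S)$ over $\pi(K)$ to the injectivity of $\pi$ on $K$ and hence on $S$ --- is exactly the one you single out and fill in, in the intended way.
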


\begin{cor}
There exist a finitely generated group $G$, a probability measure $\mu$ on $G$, and a central element $c\in\C(G)$ which acts non-trivially on the Poisson boundary of $(G,\mu)$. Moreover, $G$ can be chosen to have an infinite centre $\C(G)$ and such that all non-identity elements of $\C(G)$ act non-trivially on the Poisson boundary.
\end{cor}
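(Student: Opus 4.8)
The plan is to prove the Corollary by exhibiting a single explicit group that already satisfies the stronger ``moreover'' assertion and then invoking \lemref{lem:semifree}. The natural candidate is the direct product
\[
G = \Fo_2\times\ZZ = \langle a,b\rangle\times\langle c\rangle \;,
\]
whose centre is $\C(G)=\{e\}\times\ZZ=\langle c\rangle$ (because $\Fo_2$ is centreless), so that $G$ is finitely generated with an infinite centre. Writing $\pi:G\to G/\C(G)=\Fo_2$ for the quotient homomorphism, all I have to do is produce a finite generating set $K$ of $G$ such that $\pi(\sgr K)$ is the free subsemigroup of $\Fo_2$ freely generated by $\pi(K)$; the Corollary then follows at once, since $\mu$ uniform on $K$ is irreducible and, by \lemref{lem:semifree}, every non-identity element of $\C(G)$ acts non-trivially on $\p_\mu G$.

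The main obstacle is that two requirements on $K$ pull in opposite directions. On one hand $\pi(K)$ must generate $\Fo_2$, so that $K$ generates $G$; on the other hand $\pi(K)$ must freely generate a free \emph{subsemigroup}, i.e.\ it must be a \emph{code} in the positive monoid $\{a,b\}^{\ast}\subset\Fo_2$. A two-element generating set is of no use here: its $\pi$-image is then a free basis of $\Fo_2$, so its lifts generate only a free subgroup of rank $2$ that misses the centre entirely, and $K$ fails to generate $G$. The resolution is to take a \emph{redundant} generating set whose image is a code but not a basis, so that the (necessary) relation among its elements can be exploited to manufacture the central generator $c$.

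Concretely, I would take the prefix code
\[
\pi(K)=\{\,a,\;ba,\;bb\,\}\subset\{a,b\}^{\ast} \;,
\]
which generates $\Fo_2$ as a group (since $b=(ba)a^{-1}$) and, being a prefix code, freely generates a free subsemigroup of $\Fo_2$. Among its elements there is the relation $bb=(ba)a^{-1}(ba)a^{-1}$. I then lift it with a deliberate central defect by putting
\[
k_1=a \;,\qquad k_2=ba \;,\qquad k_3=bb\,c^{-1} \;,\qquad K=\{k_1,k_2,k_3\}\;,
\]
so that the word obtained by lifting the relation above satisfies $k_3^{-1}k_2k_1^{-1}k_2k_1^{-1}=c$ (the $\Fo_2$-coordinate collapses to $e$, while the additive central coordinate equals $1$). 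Hence $c\in\gr(K)$, and together with $a=k_1$ and $b=k_2k_1^{-1}$ this gives $\gr(K)=G$, while $\pi(K)$ is unchanged by the lift and remains the prefix code above.

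It then remains only to verify the hypothesis of \lemref{lem:semifree}: since $\pi$ is a homomorphism one has $\pi(\sgr K)=\sgr(\pi(K))$, and because $\pi(K)$ is a code, distinct positive words in $\pi(K)$ represent distinct elements of $\Fo_2$, so $\sgr(\pi(K))$ is free on $\pi(K)$, as required. Applying \lemref{lem:semifree} yields that every non-identity element of the infinite centre $\C(G)\cong\ZZ$ acts non-trivially on $\p_\mu G$, which is the ``moreover'' assertion; selecting any single $c\neq e$ gives the first assertion. The only genuine checks are that $\pi(K)$ is a code and that the lifted relation word produces $c$; both are elementary — the former by the prefix property, the latter by tracking the central coordinate, which is additive under multiplication in $G=\Fo_2\times\ZZ$.
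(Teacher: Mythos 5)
Your argument is correct, but it proves the Corollary with a genuinely different example from the paper's. The paper invokes Hall's construction of a $2$-generated \emph{solvable} group whose centre is an infinitely generated free abelian group and whose central quotient is the wreath product $\ZZ\wr\ZZ$, delegating the verification of the hypothesis of \lemref{lem:semifree} to Hall's paper. You instead take $G=\Fo_2\times\ZZ$ and build the generating set by hand: $\{a,ba,bb\}$ is a prefix code, hence freely generates a free subsemigroup of the positive monoid, which embeds in $\Fo_2$; it generates $\Fo_2$ as a group; and the central defect in the lift $k_3=bb\,c^{-1}$ recovers $c$ via $k_3^{-1}(k_2k_1^{-1})^2=c$, so $\gr(K)=G$. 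All of this checks out, and since $\pi$ is injective on $K$ itself, unique factorization in $\sgr(\pi(K))$ does give injectivity of $\pi$ on $\sgr(K)$, i.e.\ $\sgr\mu\,(\sgr\mu)^{-1}\cap\C(G)=\{e\}$, which is the mechanism behind \lemref{lem:semifree}. What your route buys is a completely self-contained, elementary example with no external citation, still meeting the ``infinite centre'' requirement of the statement. What it gives up is the force of the paper's choice: Hall's groups are amenable, so a non-trivial Poisson boundary (let alone a non-trivial central action on it) is far from automatic there, and the family realizes arbitrary countable abelian groups as centres; by contrast $\Fo_2\times\ZZ$ is non-amenable and splits its centre off as a direct factor, placing it closer in spirit to the Furstenberg $SL(2,\RR)$ precursor that the paper quotes.
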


Indeed, a well-known family of groups satisfying the conditions of \lemref{lem:semifree} was constructed by Hall in his proof of \cite[Theorem 6]{Hall54} which states that any countable abelian group can be realized as the centre of a finitely generated solvable group. He exhibited a 2-generated solvable group $G$ such that its centre $\C(G)$ is an infinitely generated free abelian group, and the quotient $G/\C(G)$ is the wreath product $\ZZ\wr\ZZ=\ZZ\sd \sum_{\ZZ} \ZZ$ \cite[Theorem~7]{Hall54} (also see the proof of Hall's theorem in \cite[14.1.1]{Robinson96}). The group~$G$ itself and its quotients by various proper subgroups of $\C(G)$ then provide the examples Hall was looking for, and all these groups satisfy the conditions of \lemref{lem:semifree}.

The fact that the matrix
{\tiny$\begin{pmatrix} \mi 1 & \!\!\!0 \\ \; 0 & \!\!\!\!\!\mi 1 \end{pmatrix}$}
from the centre of the group $G=SL(2,\RR)$ acts non-trivially on the Poisson boundary $\p_\mu G$ for any absolutely continuous measure $\mu$ whose support is in the set of matrices with non-negative entries was observed already by Furstenberg \cite[pp.\ 378-379]{Furstenberg63}. This example can be easily recast for $SL(2,\ZZ)$ and other discrete subgroups of $SL(2,\RR)$ (or of more general semi-simple Lie groups). However, we are not aware of any prior examples of this phenomenon for groups with an infinite centre.

\begin{rem}
The non-trivial action of the centre on the Poisson boundary is always \textsf{discontinual} in the sense that the ergodic components of the action are its orbits (see \cite{Kaimanovich10} for a discussion of this notion). This should be contrasted with the fact that the action of any normal subgroup on the Poisson boundary is conservative in the situation when the step distribution $\mu$ is non-degenerate \cite[Theorem 3.3.3]{Kaimanovich95}. Actually, one can show that the action of the hyper-FC-centre on the Poisson boundary is also always discontinual (if it is trivial, then the ergodic components are just the single points of the Poisson boundary). We shall return to this subject elsewhere.
\end{rem}

\subsection{The Poisson boundary of ICC groups (Theorem A)}

We can now formulate and prove our principal result:

\begin{thm}[\;=\;Theorem A] \label{thm:main}
For any countable ICC group $G$ there exist a probability measure $\mu$ on~$G$ (which can be chosen to be symmetric, non-degenerate and of finite entropy) and a locally finite forest $\Fc$ with the vertex set $G$ such that:
\begin{enumerate}[{\rm (i)}]
\item
Almost all sample paths of the random walk $(G,\mu)$ converge to the boundary $\p\Fc$ of the forest $\Fc$,
\item
Almost every sample path visits all but a finite number of points of the geodesic ray in $\Fc$ determined by its limit point,
\item
The Poisson boundary $\p_\mu G$ coincides with the boundary $\p\Fc$ endowed with the family of the resulting hitting distributions,
\item
The action of the group $G$ on the Poisson boundary $\p_\mu G$ is free \textup{(mod 0)}, i.e., the stabilizers of almost all boundary points are trivial.
\end{enumerate}
\end{thm}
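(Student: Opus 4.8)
The plan is to assemble \thmref{thm:poi} with the existence result \corref{cor:icclad2}, letting the locally finite forest required in the conclusion be the \emph{constrained} subforest $\Fc_\Psi$ rather than the (non-locally-finite) ladder forest $\Fc$ itself. First I would fix once and for all a probability measure $p=(p_j)$ on $\ZZ_+$ with $p_0>0$ and polynomial decay; by \corref{cor:br} it satisfies the eventual record simplicity condition of \lemref{lem:bsw}, and polynomial decay also guarantees finite entropy $H(p)<\infty$ (this is the source of the finite-entropy claim). From $p$ I obtain the non-decreasing functions $\Phi,\Psi$ of \lemref{lem:psi} (replacing $\Phi$ by a larger function costs nothing, and if desired makes the fast-growth condition \eqref{eq:fast}, hence the Markov description of \prpref{prp:Markov}, hold).

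Next I would produce the arboreal data. Applying \corref{cor:icclad2} to the ICC group $G$ with gauge function $\la=\Phi$ yields a $\Phi$-ladder $\La=(\Phi,\Sib,\Ab)$ in which every $\Si_i$ and every $A_i$ is symmetric, has at most two elements, and $\bigcup_{i\ge1}\Si_i\cup\bigcup_{i\ge0}A_i=G$; recall $e\in A_0$ by \eqref{eq:a0e}. I then define $\mu$ level by level: on level $i$ I distribute the total mass $p_i$ between $\Si_i$ and $A_i$, say $\mu(A_i)=2^{-i}p_i$ and $\mu(\Si_i)=(1-2^{-i})p_i$ for $i\ge1$, with $\mu(A_0)=p_0$, distributing each level's mass symmetrically between an element and its inverse. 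This choice makes $\mu$ symmetric; since $\supp\mu=G$ it is non-degenerate (hence irreducible); it realizes $\mu(\Si_i\cup A_i)=p_i$; and $\sum_i\mu(A_i)/p_i=1+\sum_{i\ge1}2^{-i}<\infty$, so all four hypotheses of \thmref{thm:poi} hold. Finally, partitioning $\supp\mu$ into the levels $\Si_i\cup A_i$ of cardinality at most four gives $H(\mu)\le H(p)+\log 4<\infty$, which settles finite entropy.

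With these verifications in hand the proof is essentially a translation. \thmref{thm:poi}(i) gives convergence of almost every sample path to a boundary point together with the visiting property, which are claims (i) and (ii); \thmref{thm:poi}(iii) is claim (iii); and \thmref{thm:poi}(iv), itself resting on \prpref{prp:free}, is claim (iv). To meet the local finiteness requirement I would take the forest $\Fc$ of the theorem to be the constrained subforest $\Fc_\Psi=\Fc_\Psi(\La)$: it is locally finite by \lemref{lem:constr} (all $\Si_i,A_i$ being finite), and by \thmref{thm:poi}(v) the convergence, visiting, and hitting-measure statements all persist on $\Fc_\Psi$, so every claim can be read off inside the locally finite forest.

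I expect no genuinely hard step to remain at this level, since the substantive work has already been done upstream --- the construction of infinitely many superswitching elements on \emph{arbitrary} ICC groups (\prpref{prp:ss}, removing the amenability hypothesis) and the trunk-criterion identification of the Poisson boundary (\thmref{thm:poi}). The only point demanding care is the simultaneous bookkeeping of the mass distribution: the weights must be chosen so that $\mu$ is at once symmetric, supported on all of $G$, of the prescribed level-masses $p_i$, and satisfies the summability $\sum_i\mu(A_i)/p_i<\infty$ needed for \lemref{lem:pos} to force the records eventually onto the spiking part, while keeping $H(p)<\infty$ throughout to secure finite entropy. These constraints are mutually compatible for the explicit choice above, so the assembly goes through.
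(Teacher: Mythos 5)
Your proposal is correct and follows essentially the same route as the paper's own proof: fix $p$ with eventual record simplicity and finite entropy, extract $\Phi,\Psi$ from \lemref{lem:psi}, build a symmetric $\Phi$-ladder with small symmetric entries covering $G$ via \corref{cor:icclad2}, distribute the mass $p_i$ over $\Si_i\cup A_i$ with $\sum_i\mu(A_i)/p_i<\infty$ (the paper uses generic coefficients $\al_i$ where you take $2^{-i}$), and invoke \thmref{thm:poi}, with the constrained subforest $\Fc_\Psi$ supplying local finiteness. Your explicit entropy bound $H(\mu)\le H(p)+\log 4$ and the explicit appeal to \thmref{thm:poi}(v) for local finiteness are slightly more detailed than the paper's write-up but substantively identical.
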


\begin{proof}
Let us fix a probability measure $p=(p_j)$ on $\ZZ_+$ which satisfies the eventual record simplicity condition of \lemref{lem:bsw}
$$
\sum_j \left( \frac{p_j}{p_j + p_{j+1} + \dots} \right)^2<\infty
$$
(for instance, any measure with polynomial decay) and has a finite entropy. Let $\Phi,\Psi:\NN\to\NN$ be the corresponding functions from \lemref{lem:psi} determined by the distribution~$p$.

Let $(\Sib,\Ab)$ be a $\Phi$-ladder on $G$ with symmetric entries of cardinality $\le 2$ whose union is the whole group $G$, which exists by \corref{cor:icclad2}. Let us take a sequence of coefficients $0<\al_i<1$ such that $\sum_i \al_i<\infty$, and define a symmetric probability measure $\mu$ with $\supp\mu=G$ by putting
$$
\begin{aligned}
\mu(A_0) &= p_0 \;,\\
\mu(\Si_i \cup A_i) &= p_i \;, \quad &i\ge 1 \;,\\
\mu(A_i\sm \Si_i) &\le \al_i p_i \;, \quad &i \ge 1 \;.
\end{aligned}
$$
Note that the finiteness of the entropy of the distribution $p$ implies that the entropy of $\mu$ is also finite. Then \thmref{thm:poi} implies the claim.
\end{proof}

As we have seen in \prpref{prp:fc}, for any non-degenerate probability measure $\mu$ on a countable group $G$ the action of the hyper-FC-centre $\FC_{\lim}(G)$ on the Poisson boundary is trivial, i.e., $\FC_{\lim}(G) \subset \Stab\p_\mu G$. \thmref{thm:main} immediately implies that $\FC_{\lim}(G)$ can, in fact, be realized as $\Stab\p_\mu G$, even stronger:

\begin{cor} \label{cor:main}
For any countable group $G$ there exists a non-degenerate probability measure on $G$ (which can be chosen to be symmetric and to have a finite entropy) such that for almost every point of the Poisson boundary $\p_\mu G$ its stabilizer is the hyper-FC-centre $\FC_{\lim}(G)$ of the group $G$.
\end{cor}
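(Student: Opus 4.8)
The plan is to pass to the quotient $\ov G=G/\FC_{\lim}(G)$, where \thmref{thm:main} provides a free boundary action, and then transport that information back to $G$. Write $N=\FC_{\lim}(G)$; by the very definition of the hyper-FC-centre (\secref{sec:icc}) the quotient $\ov G$ is either an ICC group or trivial. I would first dispose of the degenerate case: if $\ov G$ is trivial then $G=\FC_{\lim}(G)$ is hyper-FC-central, so by \remref{rem:hss} every irreducible (in particular every symmetric non-degenerate) measure has trivial Poisson boundary; any symmetric non-degenerate finite-entropy $\mu$ then makes $\p_\mu G$ a single point whose stabilizer is all of $G=N$, and we are done.

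In the main case $\ov G$ is ICC. Applying \thmref{thm:main} (through \corref{cor:icclad2} and \thmref{thm:poi}) to $\ov G$ produces a symmetric, non-degenerate, finite-entropy measure $\ov\mu$ on $\ov G$ with $\supp\ov\mu=\ov G$ and $\ov\mu(\ov e)>0$, such that the action of $\ov G$ on $\p_{\ov\mu}\ov G$ is (mod 0) free. The core step is to lift $\ov\mu$ to a measure $\mu$ on $G$ whose push-forward under $G\to\ov G$ is exactly $\ov\mu$, and which is simultaneously symmetric, non-degenerate, and of finite entropy. For every coset $\ov g\neq\ov e$ I would choose representatives compatibly with inversion (the representative of $\ov g^{-1}$ being the inverse of that of $\ov g$, and for a self-inverse coset splitting the mass symmetrically over $\{g,g^{-1}\}$) and set the fibre mass equal to $\ov\mu(\ov g)$; in each such fibre the conditional entropy is then $0$ or $\log 2$. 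Over the identity coset $N$ I would distribute the positive budget $\ov\mu(\ov e)$ as a symmetric distribution on the identity together with a countable symmetric generating set $\{n_k^{\pm1}\}$ of $N$ (which exists since $N$ is countable), with weights decaying fast enough to have finite entropy. The resulting $\mu$ is symmetric; its push-forward is $\ov\mu$ by construction; its support projects onto $\ov G$ and contains a generating set of $N$, whence $\gr\mu=G$; and $H(\mu)=H(\ov\mu)+\sum_{\ov g}\ov\mu(\ov g)\,H(\text{fibre distribution})<\infty$.

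It then remains to identify the stabilizers. Since $\mu$ is non-degenerate, \prpref{prp:fc} gives $N\subset\Stab\p_\mu G$, i.e.\ $N$ acts trivially on $\p_\mu G$; hence the ergodic components of the $N$-action are its individual points, and by \prpref{prp:h} the quotient map $\p_\mu G\to\p_{\ov\mu}\ov G$ is an isomorphism of measure spaces intertwining the $G$-action on $\p_\mu G$ (which, $N$ acting trivially, factors through $\ov G$) with the $\ov G$-action on $\p_{\ov\mu}\ov G$. Because the latter is free (mod 0) by \thmref{thm:main}(iv), any $g\in G$ with $\ov g\neq\ov e$ moves almost every boundary point while every element of $N$ fixes every boundary point, so for almost every $\xi\in\p_\mu G$ one has $\Stab\xi$ equal to the preimage of $\{\ov e\}$, namely $N=\FC_{\lim}(G)$.

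The main obstacle is the lifting in the middle step: producing $\mu$ with all four properties at once. Non-degeneracy forces $\supp\mu$ to generate the kernel $N$, which may be infinitely generated, and this is in tension with the finite-entropy requirement. The resolution is to inject a rapidly decaying symmetric generating distribution on $N$ into the positive identity-fibre budget $\ov\mu(\ov e)$, adding only finitely much entropy while leaving the push-forward unchanged. This is precisely where the present argument is sharper than the general implication (iii)$\implies$(i) of Theorem B, in which finite entropy is not asserted and one instead invokes the amenability of $N$ (which does hold here, $N$ being a transfinite extension of the amenable FC-groups $\FC\bigl(G/\FC_\al(G)\bigr)$) together with the lifting theorem of \cite{Kaimanovich02a}.
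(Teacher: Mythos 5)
Your argument is correct and follows essentially the same route as the paper: apply \thmref{thm:main} to $\ov G=G/\FC_{\lim}(G)$, lift $\ov\mu$ to a symmetric non-degenerate finite-entropy $\mu$ on $G$, and combine \prpref{prp:fc} with \prpref{prp:h} to identify $\p_\mu G$ with $\p_{\ov\mu}\ov G$ equivariantly, so that freeness of the $\ov G$-action forces $\Stab\xi=\FC_{\lim}(G)$ almost everywhere. The only difference is that you spell out the fibre-wise symmetric, finite-entropy lift (and the trivial-quotient case) which the paper dismisses as obvious, and you correctly observe that this direct lift makes the amenability-based \prpref{prp:ext} unnecessary here.
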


\begin{proof}
Let $\ov G=G/\FC_{\lim}(G)$ be the quotient of the group $G$ by it hyper-FC-centre $\FC_{\lim(G)}$. By \thmref{thm:main} there exists a finite entropy symmetric non-degenerate probability measure $\ov\mu$ on $\ov G$ such that the action of the group $\ov G$ on its Poisson boundary $\p_{\ov\mu}\ov G$ is free (mod 0). Let $\mu$ be a lift of $\ov\mu$ to $G$, which can obviously be chosen non-degenerate, symmetric and to have a finite entropy. Then by \prpref{prp:h} $\p_{\ov\mu}\ov G$ is the space of ergodic components of the action of $\FC_{\lim(G)}$ on the Poisson boundary $\p_\mu G$ of the lifted random walk, which
is trivial by \prpref{prp:fc}. Therefore, the action of $\ov G$ on $\p_{\ov\mu}\ov G$ is the same as the action of $G$ on $\p_\mu G$ factorized through the quotient homomorphism $G\to \ov G$. However, the former action as free by \thmref{thm:main}, whence the claim.
\end{proof}

\subsection{Characterization of the stabilizers of the Poisson boundary (Theorem B)}

As we have just seen (\prpref{prp:fc} and \corref{cor:main}), the inclusion
\begin{equation} \label{eq:inclh}
\FC_{\lim}(G) \subset \Stab\p_\mu G
\end{equation}
holds for any non-degenerate probability measure $\mu$ on a countable group $G$, and there always are measures for which \eqref{eq:inclh} holds as equality.

There is yet another necessary condition on the subgroup $\Stab\p_\mu G$, which follows from the fact that the action of a countable group $G$ on its Poisson boundary is \emph{amenable}, see Zimmer \cite[Corollary 5.3]{Zimmer78} (or an entirely probabilistic argument in \cite[Theorem~5.2]{Kaimanovich05a}). On the other hand, the point stabilizers of an amenable action are almost surely amenable by Elliott -- Giordano \cite[Proposition~1.2]{Elliott-Giordano93}, whence $\Stab\p_\mu G$ is always amenable.

It is well-known that there is the maximal one among the normal amenable subgroups of a countable group $G$. It is called the \textsf{amenable radical} of $G$, and we shall denote it by $\RA(G)$ (see Zimmer \cite[Proposition 4.1.12]{Zimmer84} or Furman \cite[Proposition 7]{Furman03} for its existence). If $G$ is non-amenable, then the quotient $G/\RA(G)$ is a non-amenable ICC group. In these terms the above fact can be reformulated as the inclusion
\begin{equation} \label{eq:incla}
\Stab\p_\mu G \subset \RA(G) \;.
\end{equation}

The following result is a generalization of the conjecture of Furstenberg \cite[Section~9]{Furstenberg73} (proved in \cite[Theorem 1.10]{Rosenblatt81} and \cite[Theorem~4.3]{Kaimanovich-Vershik83}) that on any amenable group $G$ (i.e., on one with $G=\RA(G)$) there exists a probability measure $\mu$ with a non-trivial Poisson boundary.

\begin{prp}[{\cite[Theorem 2]{Kaimanovich02a}}] \label{prp:ext}
If $H$ is an amenable normal subgroup of a countable group $G$, then for any probability measure $\ov\mu$ on the quotient group $\ov G=G/H$ there exists a lift $\mu$ of the measure $\ov\mu$ to $G$ such that the action of $H$ on the Poisson boundary $\p_\mu G$ is trivial.
\end{prp}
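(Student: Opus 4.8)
The plan is to reduce the statement, by means of the already-recorded \prpref{prp:h}, to an assertion about triviality of fibres, and then to use the amenability of $H$ to arrange that the ``fibre process'' of the lifted walk carries no asymptotic information. First I would reformulate the goal. A probability measure $\mu$ on $G$ is a \emph{lift} of $\ov\mu$ exactly when $\pi_*\mu=\ov\mu$ for the quotient homomorphism $\pi:G\to\ov G=G/H$. By \prpref{prp:h} the Poisson boundary $\p_{\ov\mu}\ov G$ is the space of ergodic components of the $H$-action on $\p_\mu G$, so the requirement that $H$ act trivially on $\p_\mu G$ is equivalent to the quotient map $\p_\mu G\to\p_{\ov\mu}\ov G$ being an isomorphism, and, in terms of the Poisson formula \eqref{eq:poisson}, to the requirement that \emph{every} bounded $\mu$-harmonic function on $G$ be left-$H$-invariant (hence descend to $\ov G$), i.e.\ $H\subset\Stab\p_\mu G$. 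Note that when $\ov G$ is trivial and $H=G$ this is precisely the classical statement that an amenable group admits a Liouville measure \cite{Rosenblatt81,Kaimanovich-Vershik83}, so the construction I have in mind is a fibred (relative) version of that one.

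Next I would make the lifts concrete and isolate the free parameter. Fixing a set-theoretic section $s:\ov G\to G$ of $\pi$, every lift arises by choosing, for each $\ov g\in\supp\ov\mu$, a probability measure $\beta_{\ov g}$ on the fibre $s(\ov g)H\cong H$ and setting $\mu\bigl(s(\ov g)h\bigr)=\ov\mu(\ov g)\,\beta_{\ov g}(h)$, which automatically gives $\pi_*\mu=\ov\mu$. Writing the sample path as $y_n=s(\ov y_n)\,k_n$ with $\ov y_n$ the projected walk on $(\ov G,\ov\mu)$ and $k_n\in H$, one obtains a skew-product description in which $k_n$ is an inhomogeneous process whose increments come from the $\beta$'s but are twisted by the inner automorphisms of $H$ induced by conjugation by the base trajectory (these are well-defined since $H\ns G$). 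Conditioning on the whole $\ov G$-trajectory together with its boundary point, the $G$-walk becomes, inside the fibre, a time-inhomogeneous random walk in the random environment furnished by the base; triviality of the $H$-action on $\p_\mu G$ is then exactly triviality of the boundary of this conditional fibre process for almost every base trajectory.

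The heart of the construction is to use amenability to kill the fibre boundary. I would select the fibre measures $\beta_{\ov g}$ from a (left) F\o lner sequence of $H$ (which exists for any countable amenable $H$), weighted and combined so that $\mu$ is still a probability measure projecting onto $\ov\mu$ and so that the conditional one-dimensional distributions $q_n$ on $H$ become asymptotically invariant, $\|q_n-h\,q_n\|\to 0$ for every $h\in H$. By the standard mechanism that asymptotic invariance of the one-dimensional distributions forces triviality of the tail (the same mechanism behind the Liouville property of amenable groups), the conditional fibre boundary is then trivial, and, via the reformulation above, every bounded $\mu$-harmonic function is $H$-invariant, which is what we want.

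The hard part will be that the fibre process is \emph{not} an honest random walk on $H$: the inner automorphisms induced by the base trajectory distort the increments at each step, and F\o lner sets need not be even approximately preserved by an individual automorphism. The technical core of the argument is therefore to choose the weights along the F\o lner exhaustion so that the accumulated twisting is outrun by the near-invariance of the selected increments, simultaneously for almost every base trajectory. This is exactly where amenability of $H$ (and not merely its normality) is indispensable, and it is the step that makes the explicit construction of the lift $\mu$ delicate; I would expect the finiteness of entropy to be sacrificed here, consistent with the remark following Theorem~B.
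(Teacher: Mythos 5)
First, a point of order: the paper does not actually prove \prpref{prp:ext} --- it is quoted from \cite[Theorem 2]{Kaimanovich02a} --- so there is no internal proof to compare against. Judged on its own terms, your set-up is correct and is in the spirit of the cited source: a lift of $\ov\mu$ is exactly a choice of fibre measures $\beta_{\ov g}$ on the cosets $s(\ov g)H$; the condition that $H$ act trivially on $\p_\mu G$ is equivalent, via \prpref{prp:h} and the Poisson formula \eqref{eq:poisson}, to every bounded $\mu$-harmonic function being left $H$-invariant, i.e.\ to $H\subset\Stab\p_\mu G$; and the natural quantitative criterion for the latter is the asymptotic invariance $\bigl\|\,h\mu^{*n}-\mu^{*n}\bigr\|\to 0$ for every $h\in H$ (a $0$--$2$ law statement, rather than ``triviality of the tail'' as you put it). Your skew-product description of the fibre coordinate $k_n$, with increments drawn from the $\beta$'s and twisted along the base trajectory, is also essentially accurate (though the twisting maps are automorphisms of $H$ induced by conjugation by elements of $G$, not inner automorphisms of $H$).

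The gap is that you stop exactly where the theorem begins. The entire content of \cite[Theorem 2]{Kaimanovich02a} is the construction of the weights: one must choose the F\o lner exhaustion and the fibre measures $\beta_{\ov g}$ so that, for each fixed $h\in H$, the conditional distribution of $k_n$ given the position $\ov y_n=\ov g$ of the quotient walk is asymptotically invariant under left multiplication by the conjugate $s(\ov g)^{-1}h\,s(\ov g)$ --- and these conjugates range over an infinite subset of $H$ as $\ov g$ varies, while any single F\o lner set is almost invariant only under a prescribed finite set. Your closing paragraph names this difficulty (the accumulated twisting must be ``outrun'' by the near-invariance of the increments) but supplies no mechanism for resolving it: no recursive choice of the F\o lner sets, no decomposition of $\|h\mu^{*n}-\mu^{*n}\|$ over the positions of the base walk, and no argument that the last near-uniform increment dominates the distortion produced by all the earlier ones simultaneously for almost every base trajectory. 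Since this is the step on which the whole statement rests, what you have is a correct reduction together with an accurate description of the obstacle, not a proof.
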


Additionally, if the measure $\ov\mu$ on $\ov G$ in \prpref{prp:ext} is non-degenerate, then the measure $\mu$ in the argument from \cite{Kaimanovich02a} can be chosen to be non-degenerate as well. Applied to the amenable radical $\RA(G)$, \prpref{prp:ext} implies then that for any group~$G$ there exists a non-degenerate probability measure $\mu$ on $G$, for which inclusion \eqref{eq:incla} holds as equality.

Thus, by \eqref{eq:inclh} and \eqref{eq:incla}, the pointwise stabilizer $\Stab\p_\mu G$ of the Poisson boundary $\p_\mu G$ of a non-degenerate probability measure $\mu$ on a countable group $G$ is always sandwiched between the hyper-FC-centre and the amenable radical of $G$ as
$$
\FC_{\lim}(G) \subset \Stab\p_\mu G \subset \RA(G) \;,
$$
and, as we have just explained, both $\FC_{\lim}(G)$ and $\RA(G)$ can be realized as $\Stab\p_\mu G$ for an appropriate choice of $\mu$. A combination of \thmref{thm:main} and \prpref{prp:ext} in fact allows us to give a complete description of the subgroups $H\subset G$ which can appear as $H=\Stab\p_\mu G$ for a certain non-degenerate measure $\mu$ on $G$.

\begin{thm}[\;=\;Theorem B] \label{thm:stab}
For any countable group $G$ the following conditions on a subgroup $H\subset G$ are equivalent:
\begin{enumerate}[{\rm (i)}]
\item
there exists a non-degenerate probability measure $\mu$ on $G$ such that the stabilizer of almost every point of the Poisson boundary $\p_\mu G$ is $H$;
\item
there exists a non-degenerate probability measure $\mu$ on $G$ such that the pointwise stabilizer of the Poisson boundary $\p_\mu G$ is $H$ (i.e., $H$ is the intersection of almost all point stabilizers);
\item
$H$ is an amenable normal subgroup of $G$ with the property that the quotient $G/H$ is either an ICC group or the trivial group.
\end{enumerate}
\end{thm}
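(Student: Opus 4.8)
The plan is to establish the cycle of implications \textup{(iii)}$\implies$\textup{(i)}$\implies$\textup{(ii)}$\implies$\textup{(iii)}, which yields all three equivalences at once. The step \textup{(i)}$\implies$\textup{(ii)} I would dispatch immediately: if the stabilizer of $\nnu$-almost every $\ze\in\p_\mu G$ equals $H$, then an element $g$ fixes almost every boundary point precisely when $g\in H$ (for $g\notin H$ the equivariant, a.e.\ constant assignment $\ze\mapsto\Stab\ze=H$ shows $g\ze\neq\ze$ a.e.), so the pointwise stabilizer $\Stab\p_\mu G$ is exactly $H$.

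For \textup{(ii)}$\implies$\textup{(iii)}, I would start from $H=\Stab\p_\mu G$ with $\mu$ non-degenerate. Normality of $H$ is part of the definition of the pointwise stabilizer, and amenability is handed to me by inclusion \eqref{eq:incla}, since $H\subset\RA(G)$ and subgroups of amenable groups are amenable. To pin down the quotient, let $\ov\mu$ be the image of $\mu$ on $\ov G=G/H$. By construction $H$ acts trivially on $\p_\mu G$, so \prpref{prp:h} identifies $\p_{\ov\mu}\ov G$ with $\p_\mu G$, the induced $\ov G$-action having trivial pointwise stabilizer. Applying \prpref{prp:fc} to $\bigl(\ov G,\ov\mu\bigr)$ then gives $\FC_{\lim}(\ov G)\subset\Stab\p_{\ov\mu}\ov G=\{e\}$; since the upper FC-series is increasing, this forces $\FC(\ov G)=\{e\}$, i.e.\ $\ov G$ is ICC (the trivial group included as a degenerate case), which completes \textup{(iii)}.

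The heart of the matter is \textup{(iii)}$\implies$\textup{(i)}, and here I would lean on the two principal inputs, \thmref{thm:main} and \prpref{prp:ext}. Given an amenable normal $H$ with $\ov G=G/H$ ICC, I would first apply Theorem A(iv) to $\ov G$ to obtain a non-degenerate (symmetric, finite-entropy) measure $\ov\mu$ on $\ov G$ whose boundary action is \textup{(mod 0)} free. Amenability and normality of $H$ then let me invoke \prpref{prp:ext} (together with the accompanying remark that a non-degenerate $\ov\mu$ admits a non-degenerate lift) to produce a non-degenerate $\mu$ on $G$ projecting to $\ov\mu$ and such that $H$ acts trivially on $\p_\mu G$. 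By \prpref{prp:h} this triviality makes $\p_\mu G\to\p_{\ov\mu}\ov G$ an isomorphism of $G$-spaces on which $G$ acts through $G\to\ov G$; hence for almost every $\ze\in\p_\mu G$, writing $\ov\ze$ for its image, $\Stab_G\ze$ is the full $H$-preimage of $\Stab_{\ov G}\ov\ze$, and freeness of the $\ov G$-action gives $\Stab_G\ze=H$ for almost every $\ze$, which is \textup{(i)}. The degenerate case $\ov G=\{e\}$ (i.e.\ $H=G$ amenable) I would treat separately, lifting the point mass on the trivial group via \prpref{prp:ext} to a non-degenerate $\mu$ with trivial boundary (the classical Rosenblatt / Kaimanovich--Vershik fact), the unique boundary point then having stabilizer $G=H$.

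I expect the only genuinely delicate point to be the stabilizer bookkeeping in \textup{(iii)}$\implies$\textup{(i)}: all the hard analysis is absorbed into \thmref{thm:main} and \prpref{prp:ext}, so what remains is to verify carefully that \prpref{prp:h} transports the \textup{(mod 0)} freeness of the $\ov G$-action into the \emph{exact} equality $\Stab_G\ze=H$ (rather than merely $\Stab_G\ze\supseteq H$) for almost every $\ze$, keeping the ``mod 0'' qualifiers and the ICC-versus-trivial dichotomy straight throughout.
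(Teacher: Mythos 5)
Your proposal is correct and follows essentially the same route as the paper: the same cycle of implications, with (i)$\implies$(ii) immediate, (ii)$\implies$(iii) via inclusion \eqref{eq:incla} together with \prpref{prp:h} and \prpref{prp:fc}, and the main implication (iii)$\implies$(i) obtained by applying \thmref{thm:main} to $\ov G=G/H$ and lifting via \prpref{prp:ext}. Your extra care in checking that \prpref{prp:h} converts freeness of the $\ov G$-action into the exact equality $\Stab_G\ze=H$, and your separate handling of the case $\ov G=\{e\}$, simply make explicit what the paper leaves to the reader.
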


\begin{proof}
(i)$\implies$(ii) is obvious.

(ii)$\implies$(iii) The pointwise stabilizer $\Stab\p_\mu G$ is clearly a normal subgroup of $G$, its amenability is inclusion \eqref{eq:incla}, and the fact that $G/\Stab\p_\mu G$ is an ICC group follows from \prpref{prp:h} and \prpref{prp:fc}.

(iii)$\implies$(i) By \thmref{thm:main} there is a non-degenerate probability measure $\ov\mu$ on the quotient group $\ov G=G/H$ such that the action of $\ov G$ on the Poisson boundary $\p_{\ov\mu}\ov G$ is (mod~0) free. Let $\mu$ be a lift of $\ov\mu$ to $G$ provided by \prpref{prp:ext}. Then by \prpref{prp:h} it satisfies the conditions described in (i).
\end{proof}

\begin{rem}
Property (iii) implies that
\begin{equation} \label{eq:ar}
\FC_{\lim}(G) \subset H \subset \RA(G) \;,
\end{equation}
However, the ICC property need not be inherited when passing to quotient groups, and there are numerous examples of normal subgroups $H\ns G$, for which inclusions \eqref{eq:ar} are satisfied, but $G/H$ is \emph{not} an ICC group.

The simplest situation is when $G$ is an amenable ICC group, so that $\FC_{\lim}(G)=\{e\}$ and $\RA(G)=G$. Then $G$ may well have non-ICC quotients. For instance, the Baumslag -- Solitar group $\BS(1,2)$ is an amenable ICC group, and it admits the infinite cyclic group~$\ZZ$ as its quotient. More generally, an example of this kind can be provided by any non virtually nilpotent finitely generated elementary amenable group. Indeed, on one hand the quotient of such a group by its hyper-FC-centre is a finitely generated elementary amenable ICC group. On the other hand, any finitely generated infinite elementary amenable group admits an infinite virtually cyclic quotient, see Hillman \cite[Lemma 1 of Section I]{Hillman94}.
\end{rem}

\begin{rem}
We remind that the results of \cite{Furstenberg73,Kaimanovich-Vershik83,Rosenblatt81,Frisch-Hartman-Tamuz-Ferdowsi18} provide a complete classification of countable groups with respect to the existence of irreducible measures with the trivial Poisson boundary ($\equiv$ Liouville measures):
\begin{enumerate}[{\rm (i)}]
\item
hyper-FC-central groups (every measure is Liouville),
\item
amenable non-hyper-FC-central groups (there exist both Liouville and non-Lioville measures),
\item
non-amenable groups (every measure is non-Liouville).
\end{enumerate}
However, the questions about the existence of Liouville or non-Liouville measures of a particular kind (e.g., finite entropy, finite first moment, finite support, symmetric, etc.) on amenable non-hyper-FC-central groups are quite difficult. The answers to these questions are often non-trivial, see, for instance, the examples of
\begin{enumerate}[$\bullet$]
\item
exponential growth groups, on which all finitely supported symmetric measures are Liouville, but still there are (non-symmetric) non-Liouville finitely supported measures \cite{Kaimanovich-Vershik83};
\item
exponential growth groups, on which any finitely supported measure $\mu$ (no matter, symmetric or not, and not necessarily irreducible) is Liouville with respect to the group generated by $\supp\mu$ \cite{Bartholdi-Erschler17};
\item
intermediate growth groups with finite entropy non-Liouville measures \cite{Erschler04a,Erschler-Zheng18p};
\item
amenable groups, on which any finite entropy non-degenerate measure is non-Liouville \cite{Erschler04}.
\end{enumerate}
In the same way, in view of \thmref{thm:stab} it would be interesting to describe the normal subgroups $H\ns G$ of a given group $G$ which can be realized as $H=\Stab\p_\mu G$ for non-degenerate measures on $G$ of a particular kind (cf. above). For example, the aforementioned result from \cite{Erschler04} implies that the measures arising in \prpref{prp:ext} need not have finite entropy.
\end{rem}

\section*{References}

\vskip -.5cm

\let\oldaddcontentsline\addcontentsline
\renewcommand{\addcontentsline}[3]{}

\bibliographystyle{amsalpha}
\bibliography{erschler-kaimanovich.bbl}

\let\addcontentsline\oldaddcontentsline

\end{document}